\date{
January 2, 2023}
\let\oldsection\section
\renewcommand\section{\setcounter{equation}{0}\oldsection}
\newtheorem{corollary}{Corollary}[section]
\newtheorem{theorem}{Theorem}[section]
\newtheorem{lemma}{Lemma}[section]
\newtheorem{proposition}{Proposition}[section]
\newtheorem{remark}{Remark}[section]
\begin{document}

\title[Unboundedness of entropy and uniform positivity of temperature]{Instantaneous unboundedness of the entropy and uniform positivity of the temperature for the compressible Navier-Stokes equations with fast decay density}

\author{Jinkai~Li}
\address[Jinkai~Li]{South China Research Center for Applied Mathematics and Interdisciplinary Studies,
School of Mathematical Sciences, South China Normal University, Guangzhou 510631, China}
\email{jklimath@m.scnu.edu.cn; jklimath@gmail.com}

\author{Zhouping Xin}
\address[Zhouping Xin]{
The Institute of Mathematical Sciences, The Chinese University of Hong Kong, Hong Kong, China}
\email{zpxin@ims.cuhk.edu.hk}

\keywords{Uniform positivity of temperature; immediately unboundedness of entropy;
global classic solution; compressible Navier--Stokes equations; fast decay density; Kelvin transform;
Hopf type lemma.}
\subjclass[2010]{35A01,
35B45, 35Q86, 76D03, 76D09.}

\begin{abstract}
This paper concerns the physical behaviors of any solutions to the one dimensional compressible Navier-Stokes equations for viscous and heat conductive gases with constant viscosities and heat conductivity for
fast decaying density at far fields only.
First, it is shown that the specific entropy becomes not uniformly bounded immediately after the initial time,
as long as the initial density
decays to vacuum at the far field at the rate not slower than $O\left(\frac1{|x|^{\ell_\rho}}\right)$ with $\ell_\rho>2$.
Furthermore, for faster decaying initial density, i.e., $\ell_\rho\geq4$, a sharper result is discovered that the absolute temperature becomes uniformly positive at each positive time, no matter whether it is uniformly positive or not initially, and consequently
the corresponding entropy behaves
as $O(-\log(\varrho_0(x)))$ at each positive time, independent of the boundedness of the initial entropy.
Such phenomena
are in sharp contrast to the case with slowly decaying initial density of the rate no faster than $O(\frac1{x^2})$,
for which our previous works
\cite{LIXINADV,LIXINCPAM,LIXIN3DK}
show that the uniform boundedness of the entropy can be propagated for all positive
time and thus the temperature decays to zero at the far
field. These give a complete answer to the problem concerning the propagation of uniform
boundedness of the entropy for the heat conductive ideal gases and, in particular,
show that the algebraic decay rate $2$ of the initial density at the far field is sharp for the uniform
boundedness of the entropy. The tools to prove our main results are based on some scaling transforms, including the
Kelvin transform,
and a Hopf type lemma
for a class of degenerate equations with possible unbounded coefficients.
\end{abstract}

\maketitle

\allowdisplaybreaks

\section{Introduction}
The compressible Navier--Stokes equations for the ideal viscous and heat conductive gases read as
\begin{eqnarray}
  \partial_t\rho+\text{div}\,(\rho u)=0, \label{eqr}\\
  \rho(\partial_tu+(u\cdot\nabla)u)-\mu\Delta u-(\mu+\lambda)\nabla\text{div}\,u+\nabla p=0,\label{equ}\\
  c_v\rho(\partial_t\theta+u\cdot\nabla\theta)+p\text{div}\,u-\kappa\Delta\theta=\mathscr Q(\nabla u), \label{eqt}
\end{eqnarray}
where the unknowns $\rho\geq0$, $u\in\mathbb R^N$, with $N$ the spatial dimension,
$\theta\geq0$, and $p=R\rho\theta$, respectively,
represent the density, velocity, temperature, and pressure. Here, $R$ and
$c_v$ are positive constants, $\mu$ and $\lambda$ are
the viscous coefficients, both assumed to be constants and satisfy the physical constraints
$\mu>0$ and $2\mu+N\lambda>0,$
$\kappa$ is the heat conductive coefficient, assumed to be a positive constant, and $\mathscr Q(\nabla u)$ is a quadratic term of $\nabla u$ given as
$$
\mathscr Q(\nabla u)=\frac\mu2|\nabla u+(\nabla u)^T|^2+\lambda(\text{div}\,u)^2.
$$

By the Gibbs equation $\theta Ds=De+pD(\frac1\rho)$, where $s$ is
the specific entropy and $e=c_v\theta$ is the specific internal energy, it holds that
$p=Ae^{\frac s{c_v}}\rho^\gamma$
for some positive constant $A$, where $\gamma-1=\frac R{c_v}$. It is clear that $\gamma>1$. In terms of $\rho$ and $\theta$, the specific entropy $s$ can be expressed as
\begin{equation}\label{ENTROPY}
s=c_v\left(\log\frac RA+\log\theta-(\gamma-1)\log\rho\right),
\end{equation}
satisfying
\begin{equation}
  \label{EQs}
  \rho(\partial_ts+u\cdot\nabla s)-\frac{\kappa}{c_v}\Delta s=\kappa(\gamma-1)\text{div}\left(\frac{\nabla\rho}{\rho}\right)
  +\frac1\theta\left(\mathcal Q(\nabla u)+\kappa\frac{|\nabla\theta|^2}{\theta}\right),
\end{equation}
in the region where both $\rho$ and $\theta$ are positive.

As the governing system in the gas dynamics, the compressible Navier--Stokes equations have been studied
extensively. One of the central concepts in the mathematical theory for the
compressible Navier--Stokes equations is the
vacuum, which, if occurs, means that the density vanishes at either some interior points or on the
boundary or at the far fields. Indeed, the possible presence of vacuum is one of the main difficulties in the theory
of global well-posedness of general solutions to
the compressible Navier--Stokes equations. Note that the equation (\ref{EQs}) for the entropy is highly
degenerate and singular near the vacuum, it is
even more difficult to analyze the dynamic behavior of the entropy in the presence of vacuum.
Due to this, most of the mathematical theories developed in the existing literatures
on the compressible Navier--Stokes equations in the presence
of vacuum are for system (\ref{eqr})--(\ref{eqt}) regardless of the entropy.

There are extensive literatures on the mathematical studies concerning
the compressible Navier--Stokes equations (\ref{eqr})--(\ref{eqt}).
In the one-dimensional case, the corresponding theory is satisfactory and in particular
the global well-posedness has been known for long time.
In the absence of vacuum, for which the information of the entropy follows from that of
the density and the temperature directly by (\ref{ENTROPY}), the
global well-posedness of
strong solutions was established by Kazhikov--Shelukin \cite{KAZHIKOV82} and Kazhikov \cite{KAZHIKOV77}, which
were later extended in the setting of weak solutions, see, e.g., \cite{CHEHOFTRI00,JIAZLO04,ZLOAMO97,ZLOAMO98};
large time behavior of solutions with general initial data was proved by
Li--Liang \cite{LILIANG16}. In the presence of vacuum, but without considering the entropy,
the corresponding global well-posedness were established by the first author of this paper
in \cite{LJK1DHEAT,LJK1DNONHEAT}, for both heat conductive and non-heat conductive ideal gases.
As shown by Hoff--Smoller \cite{HS}, for the one-dimensional compressible Navier--Stokes equations,
no vacuum can be formed later in finite time from non-vacuum initial data,
while such a result remains open in the multidimensional case.

In the multi-dimensional case, the mathematical theory for the compressible
Navier--Stokes equations is less complete than that in the one-dimensional case.
The breakthrough for the global existence of finite energy weak solutions with general initial data and
possible vacuum, to the isentropic compressible
Navier--Stokes equations, was achieved by Lions
\cite{LIONS98,LIONS93}. The results of Lions \cite{LIONS98,LIONS93}
were later improved by Feireisl--Novotn\'y--Petzeltov\'a \cite{FEIREISL01},
Jiang--Zhang \cite{JIAZHA03}, and more recently Bresch--Jabin \cite{BRESCH18}.
For the full compressible Navier--Stokes equations, the global existence of variational weak solutions was proved
by Feireisl \cite{FEIREISL04B}, under some assumptions on the
equations of states.
The uniqueness of weak solutions is still a challenging open problem.
If the initial datum is suitably regular, then the compressible Navier--Stokes equations admit a unique local strong
or classic solution, see \cite{NASH62,SERRIN59,ITAYA71,VOLHUD72,TANI77,VALLI82,LUKAS84} for the
case in the absence of vacuum, and \cite{SALSTR93,CHOKIM04,CHOKIM06-1,CHOKIM06-2,GLLZNOCOM,HUANGNOCOM,LIZHENGNOCOM} for the case in the presence of vacuum. However, the corresponding global existence with general initial
data may not be expected, due to the recent finite time blow up results by Merle--Rapha'el--Rodnianski--Szeftel \cite{MRRSI,MRRSII},
where for the three-dimensional isentropic compressible Navier--Stokes
equations with spherical symmetry,
regular solutions with finite time singularities are constructed for a class of initial data with far field vacuum.
Indeed, up to now, global strong or classical solutions are established only under some additional conditions
on the initial data: the case with small
perturbed initial data around non-vacuum equilibriums was achieved by
Matsumura--Nishida \cite{MATNIS80,MATNIS81,MATNIS82,MATNIS83}, and later developed in many works, see, e.g.,
\cite{PONCE85,VALZAJ86,DECK92,HOFF97,KOBSHI99,DANCHI01,CHENMIAOZHANG10,CHIDAN15,DANXU18,FZZ18}; while the case with
initial data of small energy but
allowing large oscillations and vacuum was proved by Huang--Li--Xin \cite{HLX12} and Li--Xin \cite{LIXIN13} for the
isentropic system, and later generalized to the full system in \cite{HUANGLI11,WENZHU17,LJK3DHEATSMALL}.

It is worth pointing out that
there are some significant differences in the mathematical theories for the compressible Navier--Stokes
equations between the vacuum and non-vacuum cases and new phenomena may occur depending on the locations and states of vacuum.
In the non-vacuum case, the solutions can be establish in both the homogeneous and inhomogeneous spaces depending on the
properties of the initial data, and the
solution spaces guarantee the uniform boundedness of the entropy. However, these may fail in general in the presence of vacuum. Indeed,
in the case that the density has compact support,
the solution can be established in the homogeneous spaces, see, e.g., \cite{CHOKIM04,CHOKIM06-1,CHOKIM06-2,HLX12,GLLZNOCOM,HUANGNOCOM,LIZHENGNOCOM}, but not in the inhomogeneous spaces, see Li--Wang--Xin \cite{LWX}. Further more, the blowup results of Xin \cite{XIN98}
and Xin--Yan \cite{XINYAN13} imply that the global solutions established
in \cite{HUANGLI11,WENZHU17,LJK3DHEATSMALL} must have unbounded entropy, if initially there is an isolated mass group
surrounded by the vacuum region.
However, it is somewhat surprising that if the initial density vanishes only at far fields with a rate no more than
$O(\frac{1}{|x|^2})$, then, as for the non-vacuum case,
the solutions can be established in both the homogeneous and inhomogeneous spaces, and the
entropy can be uniformly bounded, see the recent works
by the authors \cite{LIXINADV,LIXINCPAM,LIXIN3DK}.

It should be noted that since system (\ref{eqr})--(\ref{eqt}) is already
closed, one can indeed establish self-contained mathematical theories for it, as already developed in the previous
works mentioned above. However, since the second law of the thermodynamics
is not taken in to account, these theories are insufficient from the physical point of view. Therefore,
some new theories are needed to provide information for the entropy in the presence of vacuum
to meet the physical requirements. However,
due to the lack of the expression and high singularity and degeneracy of the governing equation for the entropy
near the vacuum region, in spite of its importance,
the mathematical analysis of the entropy for the viscous compressible fluids in the presence of vacuum was rarely carried
out before. Mathematical studies towards this direction has been
initiated in our previous works \cite{LIXINADV,LIXINCPAM} and further developed in \cite{LIXIN3DK},
where the propagation of the uniform boundedness
of the entropy and the inhomogeneous Sobolev regularities was achieved for the compressible Navier--Stokes equations, with or without heat
conductivities, in the presence of vacuum at the far fields,
under the crucial condition that the initial density decays to vacuum at the rate no faster than $O(\frac1{|x|^2})$.

In this paper,
we continue our studies on the dynamic behavior of the entropy in the presence of vacuum.
Different from the cases considered in \cite{LIXINADV,LIXINCPAM,LIXIN3DK}, where the density decays slowly to the vacuum at far fields,
in the current paper, we investigate the case with fast decaying density
at the far fields. For simplicity, we study the one-dimensional case in the current paper
while leave the multi-dimensional case as future works.
It will be shown in this paper that, in sharp contrast to the cases with slowly
decaying density in \cite{LIXINADV,LIXINCPAM,LIXIN3DK}, the uniform boundedness of the entropy can not be propagated
by the compressible Navier--Stokes equations for viscous and heat conductive ideal gases with constant
viscosities and heat conductivities, if the initial density decays faster than the order $O(\frac1{|x|^{\ell_\rho}})$
at the far fields with $\ell_\rho>2$. Since the uniform boundedness of the entropy has already been established in \cite{LIXINADV,LIXINCPAM,LIXIN3DK}
if the decay rate is less than $O(\frac1{|x|^2})$, our results in this paper
reveal that the decay rate $2$ of the initial density at the far field is sharp for the uniform boundedness of the
entropy. Surprisingly,
in case that the initial density decays faster than the order $O(\frac1{x^4})$, some sharper results can be achieved:
the temperature is uniformly positive immediately after the initial time, for any general nonnegative
(not identically zero) initial temperature, and, as a result, the entropy tends to infinity at the order $O(-\log(\varrho_0(x)))$
at any positive time.



Consider the Cauchy problem to the one-dimensional compressible Navier--Stokes equations for viscous and heat conductive
ideal gases
\begin{eqnarray}
  \rho_t+(\rho u)_x&=&0, \label{eqr1d}\\
  \rho(u_t+uu_x)-\mu u_{xx}+p_x&=&0,\label{equ1d}\\
  c_v\rho(\theta_t+u\theta_x)+pu_x-\kappa\theta_{xx}&=&\mu(u_x)^2, \label{eqt1d}
\end{eqnarray}
where $p=R\rho\theta$, subject to the initial condition
\begin{equation}
  (\rho, u, \theta)|_{t=0}=(\rho_0, u_0, \theta_0). \label{IC-1D}
\end{equation}

The main results of this paper will be stated and proved in the Lagrangian coordinates; however,
since the velocity of the solutions obtained in this paper have Lipschitz regularities in the
spatial variable, the results
can be transformed back to those in the Eulerian coordinates.

Define the coordinate transform
between the Lagrangian coordinate $y$ and the Eulerian coordinate $x$ as
$x=\eta(y,t)$ satisfying
\begin{equation*}\label{flowmap}
  \left\{
  \begin{array}{l}
  \partial_t\eta(y,t)=u(\eta(y,t),t),\\
  \eta(y,0)=y.
  \end{array}
  \right.
\end{equation*}
Set
\begin{equation*}
  \varrho(y,t):=\rho(\eta(y,t),t),\quad v(y,t):=u(\eta(y,t),t), \quad \vartheta(y,t):=\theta(\eta(y,t),t), \label{newunknown}
\end{equation*}
and
\begin{equation*}
 J:= J(y,t)=\eta_y(y,t).
\end{equation*}
Then, it holds that
\begin{equation*}
J_t=v_y,\quad J|_{t=0}\equiv1, \quad J\varrho=\varrho_0,\label{LCNSJ}
\end{equation*}
with $\varrho_0:=\rho_0$. We still use $s$ to denote the specific entropy in the Lagrangian coordinates. Then, it follows from (\ref{ENTROPY}) that
\begin{equation}\label{ENTROPY'}
s(y,t)=c_v\left(\log\frac RA+\log\vartheta(y,t)-(\gamma-1)\log\varrho_0(y)+(\gamma-1)\log J(y,t)\right),
\end{equation}
for any $y\in\mathbb R$ and $t\in[0,\infty)$.

Then, in the Lagrangian coordinates, the system (\ref{eqr1d})--(\ref{eqt1d}) becomes
\begin{eqnarray}
  J_t&=&v_y,\label{EqJ}\\
  \varrho_0v_t-\mu\left(\frac{v_y}{J}\right)_y+\pi_y&=&0,\label{Eqv}\\
  c_v\varrho_0\vartheta_t+v_y\pi-\kappa\left(\frac{\vartheta_y}{J}\right)_y&=&\mu\frac{|v_y|^2}{J},\label{Eqtheta}
\end{eqnarray}
where $\pi=R\frac{\varrho_0}{J}\vartheta$.
The initial data can be taken as
\begin{equation}
  \label{IC0}
  (J,v,\vartheta)|_{t=0}=(1,v_0,\vartheta_0),
\end{equation}
where $v_0=u_0$ and $\vartheta_0=\theta_0$.

The following conventions will be used throughout this paper.
For $1\leq q\leq\infty$ and positive integer $m$, $L^q=L^q(\mathbb R)$ and
$W^{1,q}=W^{m,q}(\mathbb R)$ denote the standard Lebesgue and Sobolev spaces,
respectively, and $H^m=W^{m,2}$. For simplicity, $L^q$ and
$H^m$ denote also their $N$ product spaces $(L^q)^N$ and $(H^m)^N$, respectively.
$\|u\|_q$ is the $L^q$ norm of $u$, and $\|(f_1,f_2,\cdots,f_n)\|_X$ is the sum
$\sum_{i=1}^N\|f_i\|_X$ or the equivalent norm $\left(\sum_{i=1}^N\|f_i\|_X^2
\right)^{\frac12}$.

The main results of this paper are the following three theorems. The first one yields the global existence of a solution to the Cauchy problem (\ref{EqJ})--(\ref{Eqtheta}), subject to (\ref{IC0}).

\begin{theorem}
  \label{THMGLOBAL}
  Let the initial density $\varrho_0$ be given such that $0<\varrho_0\in L^1(\mathbb R)\cap W^{2,\infty}(\mathbb R)$ and
  \begin{equation}
  |\varrho_0'|+|\varrho_0''|\leq K_1\varrho_0\quad \mbox{ on }\mathbb R,\tag{H1}
\end{equation}
for a positive constant $K_1$.
  Assume that $(v_0, \vartheta_0)$ satisfies $\vartheta_0\geq0$ on $\mathbb R$ and
  \begin{eqnarray}
    &\displaystyle(\sqrt{\varrho_0}v_0, \sqrt{\varrho_0}v_0^2, v_0', v_0'', \sqrt{\varrho_0}\vartheta_0, \sqrt{\varrho_0}\vartheta_0', \sqrt{\varrho_0}\vartheta_0'')\in L^2(\mathbb R), \quad\frac{G_0'}{\sqrt{\varrho_0}}\in L^2(\mathbb R), \\
    &\displaystyle\varliminf_{y\rightarrow-\infty}\frac{|v_0'(y)|}{\sqrt{\varrho_0(y)}}+\varliminf_{y\rightarrow+\infty}\frac{|v_0'(y)|}
    {\sqrt{\varrho_0(y)}}<+\infty,\label{H3}
  \end{eqnarray}
  where $G_0:=\mu v_0'-R\varrho_0\vartheta_0$.

  Then, there is a global solution $(J, v, \vartheta)$ to (\ref{EqJ})--(\ref{Eqtheta}), subject
  to (\ref{IC0}),
  satisfying $\inf_{(y,t)\in\mathbb R\times(0,T)}J>0$, $\theta\geq0$, and
  \begin{align*}
    &\frac{J_y}{\sqrt{\varrho_0}}, J_{yy}, J_t, J_{yt}\in L^\infty(0,T; L^2(\mathbb R)), \\
    &\sqrt{\varrho_0}v, \sqrt{\varrho_0}v^2, v_y, \frac{v_{yy}}{\sqrt{\varrho_0}}, \sqrt{\varrho_0}v_t\in L^\infty(0,T; L^2(\mathbb R)),\quad
      v_{yyy}, v_{yt}\in L^2(0,T; L^2(\mathbb R)),\\
    &\sqrt{\varrho_0}\vartheta, \sqrt{\varrho_0}\vartheta_y, \sqrt{\varrho_0}\vartheta_{yy}, \varrho_0^\frac32\vartheta_t\in L^\infty(0,T; L^2(\mathbb R)),\quad\vartheta_y\in L^2(0,T; H^2(\mathbb R)),  \\
    &\varrho_0\vartheta_t, \varrho_0\vartheta_{yt}\in L^2(0,T; L^2(\mathbb R)), \quad G_t, \left(\frac{G_y}{\varrho_0}\right)_y\in L^2(0,T; L^2(\mathbb R)),
  \end{align*}
  for any positive time $T$, where $G:=\mu\frac{v_y}{J}-R\frac{\varrho_0}{J}\vartheta$.
\end{theorem}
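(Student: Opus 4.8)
The plan is the classical three-step scheme adapted to the weighted setting: (i) construct a local-in-time solution in the function spaces of the conclusion, (ii) establish a priori estimates on $[0,T]$ with bounds depending only on $T$ and the data, and (iii) continue the local solution globally. The new difficulty, compared with the non-vacuum theory, is that $\varrho_0>0$ but decays to zero at $\pm\infty$, so weights like $1/\varrho_0$ are unbounded; here hypothesis (H1), bounding $|\varrho_0'|+|\varrho_0''|$ by $K_1\varrho_0$, is the workhorse used to absorb all density-derivative terms.

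\emph{Local existence.} Since $\varrho_0$ is strictly positive on every bounded set, \eqref{EqJ}--\eqref{Eqtheta} is, locally in space, a non-degenerate transport–parabolic system, and a local solution in the weighted spaces can be produced by a standard iteration/fixed-point argument (solve the transport equation for $J$, the linear parabolic equations for $v$ and then $\vartheta$, and close a contraction on a short interval) along the lines already developed for the one-dimensional heat-conductive case \cite{LJK1DHEAT}; alternatively one regularizes by truncating the spatial domain with compatible boundary conditions (or replacing $\varrho_0$ by $\varrho_0+\varepsilon$), solves the non-degenerate problem, and passes to the limit using the a priori bounds below. The existence time then depends only on the norms appearing in the hypotheses.

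\emph{A priori estimates.} First, the basic energy identity — \eqref{Eqv} tested by $v$ together with the integral of \eqref{Eqtheta}, using $J\varrho=\varrho_0$ and conservation of mass — controls $\sqrt{\varrho_0}v$ and $\sqrt{\varrho_0}\vartheta$ in $L^\infty(0,T;L^2)$, $v_y/\sqrt{J}$ in $L^2(0,T;L^2)$, and, after testing the momentum equation by $v^3$ and invoking (H1), $\sqrt{\varrho_0}v^2$ in $L^\infty(0,T;L^2)$. Next, nonnegativity of $\vartheta$ follows from the maximum principle applied to \eqref{Eqtheta} written as a parabolic inequality $c_v\varrho_0\vartheta_t+R\varrho_0\vartheta\,v_y/J-\kappa(\vartheta_y/J)_y\geq0$, and, using $v_y/J=(\log J)_t$ so that $\mu(\log J)_t$ is expressed through $\varrho_0 v$, its time primitive and $\pi$, one obtains pointwise bounds $0<c(T)\leq J\leq C(T)$. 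One then turns to the effective viscous flux $G=\mu v_y/J-\pi$: from \eqref{Eqv} one has $\varrho_0 v_t=G_y$, and differentiating $G$ in time and using \eqref{Eqtheta} shows that $G$ solves a parabolic equation $G_t=\frac{\mu}{J}\left(\frac{G_y}{\varrho_0}\right)_y+(\text{lower order})$ whose right-hand side is quadratic in $v_y$ and controlled by $\vartheta,\vartheta_{yy}$; energy estimates for this equation (testing by $G$ and by $G_t$, with (H1) handling $(1/\varrho_0)_y$) give $G\in L^\infty(0,T;L^2)$, $G_y/\sqrt{\varrho_0}\in L^\infty(0,T;L^2)$ and $\left(G_y/\varrho_0\right)_y,G_t\in L^2(0,T;L^2)$, hence $\sqrt{\varrho_0}v_t\in L^\infty(0,T;L^2)$, and via $v_y=\frac{J}{\mu}(G+\pi)$ also $v_y\in L^\infty(0,T;L^2)$, $v_{yy}/\sqrt{\varrho_0}\in L^\infty(0,T;L^2)$ and $v_{yyy},v_{yt}\in L^2(0,T;L^2)$. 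Weighted energy estimates for \eqref{Eqtheta} (testing by $\vartheta$, by $\vartheta_t$, and by $-(\vartheta_y/J)_y$ against the weight $\varrho_0$, absorbing the quadratic source using the bounds just obtained) then yield $\sqrt{\varrho_0}\vartheta,\sqrt{\varrho_0}\vartheta_y,\sqrt{\varrho_0}\vartheta_{yy},\varrho_0^{3/2}\vartheta_t\in L^\infty(0,T;L^2)$, $\vartheta_y\in L^2(0,T;H^2)$ and $\varrho_0\vartheta_t,\varrho_0\vartheta_{yt}\in L^2(0,T;L^2)$. Finally, differentiating $J_t=v_y$ and $J\varrho=\varrho_0$ in $y$ and using (H1) gives $J_y/\sqrt{\varrho_0},J_{yy},J_t,J_{yt}\in L^\infty(0,T;L^2)$. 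The continuation argument then closes the proof: the local solution extends as long as all these norms remain finite, which the above establishes on any $[0,T]$.

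\emph{Main obstacle.} The delicate points are the pointwise two-sided bounds for $J$ in the heat-conductive case, where $J$ is coupled to $\vartheta$ and $\varrho_0$ has no lower bound, and the top-order weighted estimates involving $1/\varrho_0$ — such as $\left(G_y/\varrho_0\right)_y\in L^2$ and $v_{yy}/\sqrt{\varrho_0}\in L^2$ — in which every integration by parts produces a factor $(1/\varrho_0)_y=-\varrho_0'/\varrho_0^2$ that must be tamed by (H1); the crux is balancing the fast decay of $\varrho_0$ at $\pm\infty$, which makes several of the integrals converge, against its vanishing there, which is the genuine source of degeneracy.
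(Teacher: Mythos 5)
Your outline has the right general shape, and your ``alternative'' route -- truncate the domain, solve a non-degenerate IBVP, and pass to the limit -- is in fact precisely what the paper does (Sections 2 and 3). However, there is a genuine gap that the proposal never touches: you never use hypothesis \eqref{H3}, the condition
\[
\varliminf_{y\to-\infty}\frac{|v_0'(y)|}{\sqrt{\varrho_0(y)}}+\varliminf_{y\to+\infty}\frac{|v_0'(y)|}{\sqrt{\varrho_0(y)}}<\infty.
\]
This is not an afterthought; the paper's own Remark 1.1(ii) states that \eqref{H3} is used solely for Step~1 of the proof of Theorem~\ref{THMGLOBAL}, the construction of compatible approximating data. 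When you truncate to $I_n=(\alpha_n-1,\beta_n+1)$ you must hand Proposition~\ref{PROPGLOBAL} initial data satisfying $v_0'(\alpha)=v_0'(\beta)=\vartheta_0(\alpha)=\vartheta_0(\beta)=0$ \emph{and} such that the quantity $\mathscr{N}_0$ of \eqref{N0} -- in particular $\|G_{0n}'/\sqrt{\varrho_0}\|_{L^2(I_n)}$ -- is bounded uniformly in $n$, since the a priori bounds of Corollary~\ref{CorAPRI} depend on it. The paper modifies $v_0$ near the endpoints by a sinusoidal extension whose second derivative is proportional to $v_0'(\alpha_n)$, $v_0'(\beta_n)$; the resulting contribution to $G_{0n}'/\sqrt{\varrho_0}$ is of size $|v_0'(\alpha_n)|/\sqrt{\varrho_0(\alpha_n)}$ (after using (H1) to compare $\varrho_0(\alpha_n)$ with $\varrho_0(y)$ for nearby $y$). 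Condition \eqref{H3} is exactly what guarantees one can pick sequences $\alpha_n\to-\infty$, $\beta_n\to+\infty$ for which these quantities stay bounded. Without this step the uniform-in-$n$ estimates fail, and the limit passage has nothing to stand on; your proposal simply asserts the existence of compatible approximations without confronting this.

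Two smaller points. First, the paper does not need a local-existence-plus-continuation argument at all: Proposition~\ref{PROPGLOBAL} is already a \emph{global} well-posedness result for the IBVP on bounded intervals (quoted from \cite{LJK1DNONHEAT}), so the limit yields a global solution directly, and your primary route of building a local Cauchy solution by a fixed point in the weighted spaces over $\mathbb{R}$ is an entirely separate (and unverified) program. Second, the replacement ``$\varrho_0\mapsto\varrho_0+\varepsilon$'' that you mention in passing would destroy the crucial assumption $\varrho_0\in L^1(\mathbb{R})$, so the basic energy bound and the pointwise bounds on $J$ (Propositions~\ref{BASIC}, \ref{Prop2.1}, which rely on $m_0=\int\varrho_0$ being finite) would no longer hold uniformly as $\varepsilon\to0$; the paper instead keeps $\varrho_0$ fixed and truncates only in space.
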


\begin{remark}
  (i) Condition (H1) allows arbitrary algebraic and even exponential decay rate of $\varrho_0$ at far fields.
  Indeed, one can check that functions of the forms $\frac{A}{(1+y^2)^\ell}$ and $e^{-(1+y^2)^\delta}$, with
  $A,\ell\in(0,\infty)$ and $\delta\in(0,\frac12]$, satisfy (H1).
  Thus, Theorem \ref{THMGLOBAL} generalizes the global existence result in our previous work \cite{LIXINCPAM},
  where some assumptions on slow decay at far fields on $\varrho_0$ are assumed.

  (ii) Condition (\ref{H3}) is used only to construct suitable approximated initial data for the corresponding initial boundary value
  problems (which are expected to converge to the Cauchy problem), see Step 1 in the proof of Theorem \ref{THMGLOBAL}.
\end{remark}

The second theorem gives the immediate unboundedness of the specific entropy if the algebraic decay rate of the initial density is greater than $2$.

\begin{theorem}
\label{UNBDDENTROPY}
  Assume, in addition to the conditions in Theorem \ref{THMGLOBAL}, that
\begin{equation}
  (1+|y|)^{\ell_\rho}\varrho_0(y)\leq K_2 , \quad\forall y\in\mathbb R, \tag{H2}
\end{equation}
for some positive constants $\ell_\rho\in(2,\infty)$ and $K_2$, and either
$\vartheta_0$ is not identically zero or $v_0$ is not identically a constant.
Let $(J, v, \vartheta)$ be a solution
to system (\ref{EqJ})--(\ref{Eqtheta}), subject to (\ref{IC0}), satisfying the properties
stated in Theorem \ref{THMGLOBAL}. Then, the specific entropy $s\not\in L^\infty(\mathbb R\times(0,T))$, for any positive time $T\in(0,\infty)$.
\end{theorem}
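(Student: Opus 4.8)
I would argue by contradiction, showing that the very fast decay of $\varrho_0$ makes heat conduction transport temperature to the far fields so efficiently that the bound on $s$ cannot survive any positive time.

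\textbf{Step 1 (reduction to a pointwise bound on $\vartheta$).} Suppose $s\in L^\infty(\mathbb R\times(0,T))$ for some $T\in(0,\infty)$. From Theorem~\ref{THMGLOBAL}, $v_y\in L^\infty(0,T;H^1(\mathbb R))\hookrightarrow L^\infty(\mathbb R\times(0,T))$ (using $v_y,\,v_{yy}/\sqrt{\varrho_0}\in L^\infty(0,T;L^2)$ and boundedness of $\varrho_0$), whence $J=1+\int_0^t v_y\,ds$ satisfies $0<\inf J\le\sup J<\infty$ on $\mathbb R\times(0,T)$, so $\log J$ is bounded there. By (\ref{ENTROPY'}) the boundedness of $s$ is then equivalent to boundedness of $\log\vartheta-(\gamma-1)\log\varrho_0$; in particular there is $C_0>0$ with
\begin{equation*}
  0\le\vartheta(y,t)\le C_0\,\varrho_0(y)^{\gamma-1}\le C_0K_2^{\gamma-1}(1+|y|)^{-\ell_\rho(\gamma-1)},\qquad (y,t)\in\mathbb R\times(0,T).
\end{equation*}

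\textbf{Step 2 (strict positivity of $\vartheta$).} Rewrite (\ref{Eqtheta}) as $c_v\varrho_0\vartheta_t-\kappa(\vartheta_y/J)_y+R\frac{\varrho_0}{J}v_y\vartheta=\mu v_y^2/J\ge0$. On each strip $\{|y|\le R_0\}\times(0,T)$ this operator is uniformly parabolic (as $\varrho_0>0$ there and $J$ is two‑sided bounded), and $\vartheta\ge0$ is a supersolution with nonnegative right‑hand side, so the strong maximum principle gives $\vartheta>0$ on $\mathbb R\times(0,T)$ once we know $\vartheta(\cdot,t)\not\equiv0$ for $t>0$. This holds when $\vartheta_0\not\equiv0$. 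When instead $v_0$ is not constant it holds because $v$ can never be spatially constant: if $v_y\equiv0$ on a time interval, then $\pi_y\equiv-\varrho_0\,w'(t)$ for $v(y,t)=w(t)$, and letting $y\to\pm\infty$ (where $\pi=R\frac{\varrho_0}{J}\vartheta\to0$ by Step 1) forces $w'(t)\|\varrho_0\|_{1}=0$, hence $v\equiv v_0$, a contradiction; so the source $\mu v_y^2/J\not\equiv0$ on every time interval and $\vartheta>0$ for all $t>0$.

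\textbf{Step 3 (Kelvin transform and the degenerate Hopf lemma).} Fix $a>1$ and set $z=1/y$ on $y\ge a$, so the far field $y=+\infty$ becomes the endpoint $z=0$ of $(0,1/a]$ (the field $y=-\infty$ is symmetric). With $\partial_y=-z^2\partial_z$ one computes $(\vartheta_y/J)_y=\frac{z^4}{J}\vartheta_{zz}+\frac{2z^3}{J}\vartheta_z+\frac{z^2J_y}{J^2}\vartheta_z$, and since $|J_y|\lesssim\sqrt{\varrho_0}\lesssim z^{\ell_\rho/2}$ by the estimates of Theorem~\ref{THMGLOBAL}, the last term is of strictly lower order precisely because $\ell_\rho>2$. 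Dividing (\ref{Eqtheta}) by $c_v\varrho_0$ and converting the convective and source terms, the temperature equation near $z=0$ takes the form
\begin{equation*}
  \vartheta_t=D(z,t)\Big(\vartheta_{zz}+\tfrac2z\vartheta_z\Big)+b(z,t)\vartheta_z+c(z,t)\vartheta+f(z,t),\qquad D:=\frac{\kappa z^4}{c_v\varrho_0 J},
\end{equation*}
with $f\ge0$, $c$ bounded, $b$ of lower order, and, crucially, $D(z,t)\ge c_*\,z^{\,4-\ell_\rho}$ with $4-\ell_\rho<2$. This is a degenerate, possibly unbounded–coefficient (radial, three–dimensional) heat operator, and $\ell_\rho>2$ is exactly the threshold that makes $\int_{0^+}z\,D(z,t)^{-1}\,dz<\infty$, i.e.\ makes the degenerate endpoint $z=0$ accessible to the diffusion. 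The Hopf type lemma for this class of equations (announced in the introduction), applied to the nonnegative supersolution $\vartheta$ — which is strictly positive for $z>0$ by Step 2 and, by Step 1, extends continuously to $z=0$ with value $0$ — yields a contradiction: such a $\vartheta$ cannot vanish at $z=0$, or, if it does, cannot vanish there as fast as $z^{\ell_\rho(\gamma-1)}$. Since the bound of Step 1, transported through $z=1/y$, says exactly $\vartheta(z,t)\le C_0K_2^{\gamma-1}z^{\ell_\rho(\gamma-1)}$ near $z=0$, this is the desired contradiction; hence $s\notin L^\infty(\mathbb R\times(0,T))$, and $T>0$ was arbitrary.

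\textbf{Main obstacle.} The heart of the matter — and where the threshold $\ell_\rho=2$ is sharp — is the degenerate Hopf type lemma and the matching of the vanishing rate it produces at $z=0$ against $z^{\ell_\rho(\gamma-1)}$. A plain parabolic comparison on a time slab $[t_0,T]$ will not work, since it would require a lower bound for $\vartheta$ at the intermediate time $t_0$, whereas the standing assumption keeps $\vartheta(\cdot,t_0)$ as small as $\varrho_0^{\gamma-1}$ there; one must instead exploit the instantaneous propagation of interior positivity to the accessible endpoint $z=0$ built into the degenerate operator, and one must check that the solution supplied by Theorem~\ref{THMGLOBAL} — through its stated regularity — is genuinely subject to this lemma rather than to a degenerate Dirichlet‑type behaviour at the far field. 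Constructing the appropriate barriers for the operator above, tracking carefully the non‑constant $J$, the convective term, and the quadratic source $f$, is the main technical work; the remaining steps are routine, if lengthy.
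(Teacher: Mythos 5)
Your global architecture is the paper's (reduce the entropy bound to a pointwise decay of $\vartheta$; use the strong maximum principle for interior positivity; scale the far field to an accessible endpoint and invoke a Hopf type lemma; match the resulting linear vanishing against the assumed algebraic decay), but two of your choices in Step 3 create genuine gaps.

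First, the plain inversion $z=1/y$ is the paper's power scaling $f(y,t)=\vartheta(y^{-\beta},t)$ frozen at $\beta=1$, and this fixed exponent is not enough. The Hopf lemma produces a bound $\vartheta\gtrsim z$ near $z=0$, i.e.\ $\vartheta(y,t_0)\gtrsim y^{-1/\beta}$, while Step~1 gives $\vartheta\lesssim y^{-\ell_\rho(\gamma-1)}$; a contradiction requires $\ell_\rho(\gamma-1)>\tfrac1\beta$. With $\beta=1$ this needs $\ell_\rho(\gamma-1)>1$, which is false for instance when $\gamma$ is close to $1$ and $\ell_\rho\in(2,\tfrac1{\gamma-1}]$. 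The paper resolves this by \emph{choosing} $\beta=\beta_0=\max\{\tfrac2{(\gamma-1)\ell_\rho},\tfrac2{\ell_\rho-2}\}$, so that the Hopf rate $y^{-1/\beta_0}$ always beats $y^{-\ell_\rho(\gamma-1)}$ and, at the same time, the coefficients become bounded near the endpoint (the second constraint $\beta\geq\tfrac2{\ell_\rho-2}$ fails for $\beta=1$ as soon as $\ell_\rho<4$). This tunable $\beta$ is the essential extra ingredient your proposal is missing.

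Second, you normalize by dividing the temperature equation by $c_v\varrho_0$, which puts the degeneracy in the diffusion coefficient $D\sim z^{4-\ell_\rho}$ and then appeals to ``the Hopf type lemma for this class of equations.'' But the paper's Hopf lemma (Lemma~\ref{LEMHOPF}) demands uniform ellipticity $\lambda|\xi|^2\leq a_{ij}\xi_i\xi_j\leq\Lambda|\xi|^2$; the degeneracy it tolerates sits in the time coefficient $a_0$ (which may vanish) and in the singular, right-signed piece of the first order coefficient $b$. The paper therefore does \emph{not} divide by $\varrho_0$: it leaves $a_0=c_v\varrho_0(y^{-\beta})y^{-(2+2\beta)}J$ in front of $f_t$, keeps $a=\kappa/\beta^2$ a constant, and checks that $a_0$ and $c$ are bounded while the $y^{-1}$ piece of $b$ carries the favourable sign. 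Your recasting of the operator as a radial heat operator with degenerate $D$ and the integrability heuristic $\int_{0^+}zD^{-1}\,dz<\infty$ are suggestive, but they are not what Lemma~\ref{LEMHOPF} covers, and you would need to prove a new barrier construction for that different degeneracy. So the proposal, as written, does not supply a proof: you need the tunable exponent $\beta$ to close the rate comparison in all cases $\ell_\rho>2$, $\gamma>1$, and you need to keep the equation in the form where the degeneracy is in $a_0$, which is the only form the paper's Hopf lemma handles.
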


\begin{remark}
 Theorem \ref{UNBDDENTROPY} reveals a completely different phenomenon from that
  in \cite{LIXINADV,LIXINCPAM,LIXIN3DK}, where the initial density decays no faster than $O(\frac1{y^2})$ at far fields, so that the entropy keeps uniformly bounded. While Theorem \ref{UNBDDENTROPY}
  shows that if the initial density decays faster than $O\left(\frac1{|y|^{\ell_\rho}}\right)$, with $\ell_\rho>2$, at far fields, then
  the entropy becomes not uniformly bounded immediately after the initial time.
Consequently, we have given a complete answer to the problem concerning the propagation of
uniform boundedness of entropy for ideal gases in one dimension: the uniform boundedness of the entropy for
the ideal gases, in the presence of vacuum at the far fields only in one dimension,
can be propagated if and only if the algebraic decay rate of the initial density is not greater than $2$. In other words, the decay
rate $2$ of the initial density at the far fields is sharp for the uniform boundedness of the entropy in one dimension.
\end{remark}

The main ingredients of the proof of Theorem \ref{UNBDDENTROPY} are based on using some scaling transform to transform the far field vacuum to an interior vacuum and applying a Hopf type lemma for a class of linear degenerate elliptic
equations with degeneracy in the time variable and possible unbounded coefficients.
The scaling transform for the temperature to be used here is
$$
f(y,t):=\vartheta(y^{-\beta},t),\quad y\in(0,\infty), t\in[0,\infty),
$$
for some suitably chosen $\beta>0$. Similar transform can also be introduced for negative $y$.
Due to the continuity equation (\ref{eqr1d}) and the
assumption that the initial density reaches vacuum only at the far fields, the density remains positive on any compact interval for all
positive time. Thus the equation (\ref{eqt1d}) can be regarded a uniform
parabolic equation for $\theta$ on compact domains. Consequently, the temperature will be positive on any finite interval for any positive time
$t$ by the strong maximum principle, and thus $f$ is positive for any positive $y$ and $t$. By using the properties of $\vartheta$ stated
in Theorem \ref{THMGLOBAL}, one can verify that $0<f\in C^{2,1}((0,\infty)\times(0,\infty))$. Assuming by contradiction that
the entropy is uniformly
bounded, one can extend $f$ by zero on the positive time axis,
such that $0\leq f\in C([0,\infty)\times[0,\infty))$ and reaches zero on the positive time axis only. The temperature equation yields
\begin{equation*}
  a_0f_t-af_{yy}+bf_y+\tilde cf\geq0, \quad\mbox{in }(0,\infty)\times(0,\infty),
\end{equation*}
which motivates us to apply the Hopf type lema to $f$ at the points on the positive time axis.
By choosing $\beta$ suitably, one can verify that
the coefficients $a_0$ and $\tilde c$ are uniformly bounded near the positive time axis;
however, the coefficient $b$ contains an unbounded term involving $\frac1y$. Fortunately, such an unbounded term in $b$ is of ``right" sign
while the remaining term in $b$ is uniformly bounded for suitably chosen $\beta$, so
that the Hopf type lemma still holds (see Lemma \ref{LEMHOPF}). Thus applying
the Hopf type lemma to $f$ near the positive time axis leads to a quantitative asymptotic
behavior of the temperature at the far field. The contradiction comes from the fact that the asymptotic behavior of the temperature
derived from the
Hopf type lemma is not consistent with
that derived from (H2) and the uniform boundedness of the entropy. This inconsistency implies that the entropy can not be uniformly bounded
and thus Theorem \ref{UNBDDENTROPY} follows.

The third theorem gives the uniform positivity of the temperature and consequently the
asymptotic unboundedness of the entropy, which are sharper results than those in Theorem \ref{UNBDDENTROPY}, under the stronger assumption that the algebraic decay rate of the initial density at the far field is greater than $4$.

\begin{theorem}
  \label{THMPOSTEM}
Assume, in addition to the conditions in Theorem \ref{THMGLOBAL}, that
\begin{equation}
  (1+|y|)^4\varrho_0(y)\leq K_3, \quad\forall y\in\mathbb R, \tag{H3}
\end{equation}
for a positive constant $K_3$, and either
$\vartheta_0$ is not identically zero or $v_0$ is not identically a constant.
Let $(J, v, \vartheta)$ be a solution
to system (\ref{EqJ})--(\ref{Eqtheta}), subject to (\ref{IC0}), satisfying the properties stated in Theorem \ref{THMGLOBAL}.

Then, the following statements hold:

(i) the temperature $\vartheta$ satisfies
$$
\inf_{y\in\mathbb R}\vartheta(y, t)>0,\quad\forall t\in(0,\infty);
$$

(ii) the specific entropy $s$
satisfies
$$
R\leq\varliminf_{|y|\rightarrow\infty}\frac{s(y,t)}{-\log(\varrho_0(y))}\leq\varlimsup_{|y|\rightarrow\infty}
\frac{s(y,t)}{-\log(\varrho_0(y))}<\infty , \quad\forall t\in(0,\infty).
$$
In particular, $s$ becomes unbounded immediately after the initial time,
regardless of whether it is uniformly bounded or not at the initial time.
\end{theorem}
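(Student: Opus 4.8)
The plan is to reduce (ii) to (i) by means of the a priori bounds of Theorem~\ref{THMGLOBAL}, and to establish (i) by running the scaling-transform/Hopf-type-lemma machinery of Theorem~\ref{UNBDDENTROPY}, but now so as to extract a \emph{quantitative lower bound} on the temperature at the far field rather than a contradiction.

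First, collect the elementary consequences of Theorem~\ref{THMGLOBAL}. From $J=1+\int_0^t v_y\,d\tau$ together with $v_y\in L^\infty(0,T;L^2)$ and $v_{yy}=\sqrt{\varrho_0}\,(v_{yy}/\sqrt{\varrho_0})\in L^\infty(0,T;L^2)$ (using $\varrho_0\in L^\infty$), one gets $v_y\in L^\infty(\mathbb R\times(0,T))$, hence, with the lower bound $\inf J>0$ from Theorem~\ref{THMGLOBAL}, $0<c\le J\le C$ on $\mathbb R\times(0,T)$; likewise $\sqrt{\varrho_0}\,\vartheta$ and $(\sqrt{\varrho_0}\,\vartheta)_y=\tfrac{\varrho_0'}{2\sqrt{\varrho_0}}\vartheta+\sqrt{\varrho_0}\,\vartheta_y$ lie in $L^\infty(0,T;L^2)$ by (H1), so $\sqrt{\varrho_0}\,\vartheta\in L^\infty(\mathbb R\times(0,T))$, i.e. $\vartheta\le C\varrho_0^{-1/2}$. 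Granting (i), (\ref{ENTROPY'}) and $c_v(\gamma-1)=R$ give $s=R(-\log\varrho_0)+c_v\log\vartheta+R\log J+c_v\log\tfrac RA$; since $R\log J$ is bounded, $\inf_{y}\vartheta(\cdot,t)>0$ yields $\varliminf_{|y|\to\infty}\tfrac{s}{-\log\varrho_0}\ge R$, while $\vartheta\le C\varrho_0^{-1/2}$ yields $\varlimsup_{|y|\to\infty}\tfrac{s}{-\log\varrho_0}\le R+\tfrac{c_v}{2}<\infty$, and $s\ge R(-\log\varrho_0)-C\to+\infty$ at the far field shows $s\notin L^\infty$ for every $t>0$. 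Thus it remains to prove (i).

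Fix $t_1>0$. By the strong maximum principle applied to (\ref{Eqtheta}) on compact intervals (where $\varrho_0$ is bounded below, so the equation is uniformly parabolic) together with the hypothesis that $(\vartheta_0,v_0)$ is nontrivial, $\vartheta>0$ on $[-L,L]\times(0,T]$ and $\inf_{[-L,L]}\vartheta(\cdot,t)>0$ for every $t>0$, so it suffices to bound $\vartheta(\cdot,t_1)$ from below near $+\infty$ (the case $-\infty$ being symmetric). Choose $\beta\ge1$ as in the proof of Theorem~\ref{UNBDDENTROPY} ($\beta=1$ is admissible under (H3)), put $w:=e^{\Lambda t}\vartheta$ with $\Lambda:=\tfrac R{c_v}\|v_y/J\|_{L^\infty}$ so that $c_v\varrho_0w_t-\kappa(w_y/J)_y\ge0$, and set $\tilde f(y,t):=e^{\Lambda t}\vartheta(y^{-\beta},t)$ for $(y,t)\in(0,\infty)^2$. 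Pushing this inequality through the change of variable $z=y^{-\beta}$ and normalising the second-order coefficient, $\tilde f$ becomes a positive classical supersolution
\begin{equation*}
  a_0\,\tilde f_t-\tilde f_{yy}+b\,\tilde f_y\ \ge\ 0 \qquad \text{in }(0,\delta_0)\times(0,\infty),
\end{equation*}
for some small $\delta_0>0$, where, using (H3) to estimate $\varrho_0(y^{-\beta})\le Cy^{\beta\ell_\rho}$ and the far-field decay of $J_y$, one has $0\le a_0\le C$ and $b=-\tfrac{\beta+1}{y}+b_1$ with $|b_1|\le C$; the crucial point is that the unbounded term $-\tfrac{\beta+1}{y}$ carries the sign favorable for the Hopf-type lemma at $y=0^+$, while $a_0$ may degenerate at $y=0$. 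Writing $\tilde f(y,t)=\tilde F(x,t)$ with $|x|=y$ identifies $\tilde F$ with a radial supersolution of a (possibly time-degenerate) parabolic operator whose spatial part is $\Delta$ in dimension $n=\beta+2\ge3$ plus a bounded drift on the punctured ball $B_{\delta_0}\setminus\{0\}$, so that $\{y=0\}$ plays the role of an \emph{interior} point at which no boundary datum is needed; it is exactly the hypothesis $\ell_\rho\ge4$ that allows the choice $\beta\ge1$ keeping all these bounds and signs.

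Now run the minimum-principle argument on $(0,\delta_0)\times(t_0,t_1)$ with $0<t_0<t_1$: the compact positivity above gives $\tilde f\ge m_0>0$ on $\{y=\delta_0\}$ and $\tilde f(\cdot,t_0)>0$, not identically zero, on $(0,\delta_0)$. If $\varliminf_{y\to0^+}\tilde f(y,t_1)=0$, then by the removability of the singularity at the centre for nonnegative supersolutions in dimension $n\ge3$, $\tilde F$ extends across $x=0$ as a supersolution with $\tilde F(0,t_1)=0$; the strong minimum principle — or, at the degenerate values of $a_0$, the Hopf-type Lemma~\ref{LEMHOPF} — then forces $\tilde F\equiv0$ on $B_{\delta_0}\times(t_1-\varepsilon,t_1]$, contradicting $\tilde f>0$ on $(0,\delta_0)\times(0,\infty)$. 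Hence $\varliminf_{y\to0^+}\tilde f(y,t_1)>0$; combined with $\tilde f>0$ on the compact part $[\delta_0/2,\delta_0]\times\{t_1\}$ this gives $\inf_{0<y<\delta_0}\tilde f(y,t_1)>0$, i.e. $\varliminf_{z\to+\infty}\vartheta(z,t_1)>0$. With the symmetric estimate at $-\infty$ and positivity on compacts, $\inf_{y\in\mathbb R}\vartheta(y,t_1)>0$, and since $t_1$ was arbitrary, (i) follows. The main obstacle is the degenerate-parabolic analysis near $\{y=0\}$: proving the Hopf-type/strong-minimum statement for $a_0\partial_t-\partial_{yy}+b\partial_y$ when $b$ is unbounded (only the $O(1/y)$ part having the good sign) and $a_0$ may vanish — the content of Lemma~\ref{LEMHOPF}. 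A second, more technical, difficulty is verifying that the change of variables produces precisely the asserted coefficient bounds from the a priori regularity of $(J,v,\vartheta)$ in Theorem~\ref{THMGLOBAL} together with (H1) and (H3) — in particular controlling $J_y$, $\vartheta_y$ and $\vartheta_{yy}$ at the far field — and it is here that the numerical threshold $\ell_\rho\ge4$ enters.
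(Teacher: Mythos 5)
Your reduction of (ii) to (i) is correct and essentially matches the paper's: using (\ref{ENTROPY'}), $c_v(\gamma-1)=R$, the uniform bounds on $J$, and the boundedness of $\sqrt{\varrho_0}\vartheta$ (or equivalently (\ref{JY-3}) together with (H3)), the asymptotics of $s/(-\log\varrho_0)$ follow once $\inf_y\vartheta(\cdot,t)>0$ is known.

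For (i), however, there is a genuine gap, and it is precisely where you yourself flag the difficulty. Your transform $\tilde f(y,t)=e^{\Lambda t}\vartheta(y^{-\beta},t)$ is the one used in Theorem~\ref{UNBDDENTROPY}, but there the boundary value $f(0,t)=0$ was obtained \emph{only under the contradiction hypothesis that the entropy is bounded}; without that hypothesis, (\ref{JY-3}) gives merely $\vartheta(z,t)\lesssim\sqrt{|z|+1}$, so $\tilde f(y,t)\lesssim y^{-\beta/2}$ may blow up as $y\to0^+$. Thus the object you propose to apply a removable-singularity theorem to is not a priori bounded. Moreover, the Bôcher-type removability you invoke is an elliptic statement about nonnegative superharmonic functions; the operator at hand is $a_0(x,t)\partial_t-\Delta_n+b_1\cdot\nabla+\dots$ with $a_0\to0$ at the degenerate centre, and no such removability theorem is available in this degenerate-parabolic setting. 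Finally, even granting an extension with $\tilde F(0,t_1)=0$, the conclusion ``$\tilde F\equiv0$ near $(0,t_1)$'' does not follow from Lemma~\ref{LEMHOPF} (which is a boundary-derivative estimate at a fixed boundary point, not a unique continuation result) nor from the strong minimum principle at a point where the time-derivative coefficient vanishes. So the core analytic step in your proposal is not justified.

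What the paper does instead — and what your plan misses — is to replace $\vartheta(y^{-\beta},t)$ by the one-dimensional \emph{Kelvin transform} $h(y,t)=y\,\vartheta(1/y,t)$. This is not just a cosmetic change. Two structural miracles occur. First, the identity $\vartheta_{yy}(z,t)=z^{-3}h_{yy}(1/z,t)$ holds exactly, so the second-order term produces no lower-order drift corrections, and after normalization the drift coefficient becomes $b=-\kappa\,J_y(1/y,t)J^{-2}(1/y,t)\,y^{-2}$, which is \emph{uniformly bounded} under (H3) via (\ref{JYRHO}); likewise $a_0=c_v\varrho_0(1/y)y^{-4}$ is bounded exactly because $\ell_\rho\ge4$. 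No unbounded $1/y$ term survives, so the clean Corollary~\ref{CORHOPF} (with all coefficients bounded) applies directly, and the dimensional lifting / removable singularity machinery is not needed at all. Second, the prefactor $y$ precisely absorbs the $\sqrt{|z|}$ growth of $\vartheta$ coming from (\ref{JY-3}), giving $h(y,t)\to0$ as $y\to0$ \emph{unconditionally} — no entropy hypothesis is required — so that $H(0,t)=0$ and the Hopf lemma applies at the degenerate point $P_*=(0,t_0)$. The odd symmetry of $h$ (it changes sign across $y=0$) is then handled in the paper by defining $\widetilde H$ to restore positivity, which is also an ingredient your sketch does not address.

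In short: you correctly identify the overall structure (scaling to move the far-field vacuum to an interior degenerate point, Hopf-type lemma, then read off positivity), but the specific change of variables you choose does not regularize the problem and forces you into a degenerate-parabolic removable-singularity argument that is not available; the Kelvin transform is the device that makes the argument close.
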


\begin{remark}
  It is an interesting question to show whether Theorem \ref{THMPOSTEM} still holds in the case that the algebraic
  decay rate of $\varrho_0$ lies between $2$ and $4$. However,
  as already shown in Theorem \ref{UNBDDENTROPY}, in this case, though the uniform positivity of the temperature is not clear, yet
  the specific entropy becomes not uniformly bounded in any positive time.
\end{remark}

Recall that
the temperature is positive on any finite interval for any positive time $t$. To obtain the positive lower bound for the temperature at
any positive time, it suffices to achieve this at far fields.
To this end, similar as in the proof of Theorem \ref{UNBDDENTROPY}, we apply some scaling technique to
transform the far field vacuum to an interior vacuum and take advantage of the Hopf type lemma. However, the scaling transform
introduced before does not work here directly. Instead, we apply the
Kelvin transform to the temperature $\vartheta$ and denote by $h$ the transformed temperature, that is,
$$
h(y,t)=y\vartheta\left(\frac1y,t\right),\quad \forall y\not=0, t\in[0,\infty),
$$
which satisfies a linear degenerate equation,
with all coefficients being uniformly bounded by the assumption (H3). By using the properties of $\vartheta$ stated in Theorem
\ref{THMGLOBAL}, one can verify that $0\leq h\in C^{2,1}(\Omega)\cap C(\overline\Omega)$ and more importantly $h(0,t)=0$,
where $\Omega=((-\infty,0)\cup(0,\infty))\times(0,\infty)$. Note that different from the proof of Theorem
\ref{UNBDDENTROPY}, here the important property that
$h(0,t)=0$ holds without any condition on the entropy.
By the
Hopf type lemma (Lemma \ref{LEMHOPF}) and applying the
strong maximum principle,
we can derive that $h$ behaves linearly near the origin at each positive time and hence obtain
the uniformly positive lower bound for the temperature near the far fields. With the
aid of the positive lower bound of the temperature, the
asymptotic unboundedness of the entropy follows from (\ref{ENTROPY'}) as $J$ has uniform positive lower and upper bounds.

The rest of this paper is arranged as follows: in Section \ref{SECIBVP}, we consider a carefully designed initial-boundary value problem
for the system (\ref{EqJ})--(\ref{Eqtheta}) and establish a series of a priori estimates on the solution
independent of the length of the spatial interval;
in Section \ref{SECGLOBAL}, we obtain the global existence of solutions to the Cauchy problem and thus prove
Theorem \ref{THMGLOBAL} by taking limit of the solutions obtained in Section \ref{SECIBVP};
Section \ref{SECUNBDDENTROPY} is devoted to the proof of Theorem \ref{UNBDDENTROPY}; and finally, the proof of Theorem \ref{THMPOSTEM} is given
in Section \ref{SECPOSTEM}.

Throughout this paper, $C$ will denote a generic positive constant which may vary from place to place.

\section{Initial-boundary value problem and a priori estimates}
\label{SECIBVP}
Throughout this section, we consider the initial-boundary value problem to the system (\ref{EqJ})--(\ref{Eqtheta}), in
$(\alpha, \beta)\times(0,\infty)$, with $-\infty<\alpha<\beta<+\infty$, subject to the initial-boundary conditions:
\begin{align}
  (J, v, \vartheta)|_{t=0}=(1, v_0, \vartheta_0), \label{IC}\\
  (v_y, \vartheta)|_{y= \alpha, \beta }=(0, 0). \label{BC}
\end{align}

The following global well-posedness can be proved in the same way as in \cite{LJK1DNONHEAT}.

\begin{proposition}
\label{PROPGLOBAL}
Let $(\varrho_0, v_0, \vartheta_0)\in H^2((\alpha, \beta))$ be given such that $\varrho_0, \vartheta_0\geq0$ on $(\alpha, \beta)$ and
$v_0'(\alpha)=v_0'(\beta)=\vartheta_0(\alpha)=\vartheta_0(\beta)=0.$
Assume that
\begin{eqnarray*}
  \mu v_0''-R(\varrho_0\vartheta_0)'=\sqrt{\varrho_0}g_1, \quad \kappa\vartheta_0''+\mu(v_0')^2-Rv_0'\varrho_0\vartheta_0=\sqrt{\varrho_0}g_2,
\end{eqnarray*}
for two functions $g_1, g_2\in L^2((\alpha, \beta))$.

Then, there is a unique global solution $(J, v, \vartheta)$ to system (\ref{EqJ})--(\ref{Eqtheta}),
in $(\alpha, \beta)\times[0,\infty)$, subject to (\ref{IC})--(\ref{BC}), satisfying
$\inf_{(y,t)\in(\alpha,\beta)\times(0,T)}J>0$, $\vartheta\geq0$, and
\begin{eqnarray*}
  &&J\in C([0,T]; H^2((\alpha, \beta))),\quad J_t\in L^2(0,T;H^2((\alpha, \beta))),\\
  &&v,\vartheta\in C([0,T]; H^2((\alpha, \beta)))\cap L^2(0,T; H^3((\alpha, \beta))),\quad v_t,\vartheta_t\in L^2(0,T; H^1((\alpha, \beta))),
\end{eqnarray*}
for any $T\in(0,\infty)$.
\end{proposition}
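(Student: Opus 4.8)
\medskip
\noindent\textbf{Outline of the proof.}\quad I only describe the strategy, the details being those of \cite{LJK1DNONHEAT} with the heat‑conduction term added. The plan has two stages: first construct a unique local‑in‑time solution with the stated regularity by a linearization--iteration scheme, and then extend it to $[0,\infty)$ by closing a priori estimates on an arbitrary interval $[0,T]$, with constants allowed to depend on $\alpha,\beta$ and $T$ (the interval‑independent estimates are a separate matter, carried out later in this section).

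For the local theory, starting from $(v^{0},\vartheta^{0})=(v_0,\vartheta_0)$ and given $(v^{n},\vartheta^{n})$, I would set $J^{n+1}(y,t):=1+\int_0^t\partial_y v^{n}(y,\tau)\,d\tau$ and solve the linear parabolic problems
\[
\varrho_0\,\partial_t v^{n+1}-\mu\,\partial_y\!\Big(\frac{\partial_y v^{n+1}}{J^{n+1}}\Big)=-\,\partial_y\!\Big(R\frac{\varrho_0}{J^{n+1}}\vartheta^{n}\Big),
\]
\[
c_v\varrho_0\,\partial_t\vartheta^{n+1}-\kappa\,\partial_y\!\Big(\frac{\partial_y\vartheta^{n+1}}{J^{n+1}}\Big)=\mu\frac{|\partial_y v^{n}|^{2}}{J^{n+1}}-R\frac{\varrho_0}{J^{n+1}}\,\partial_y v^{n}\,\vartheta^{n},
\]
with $\partial_y v^{n+1}|_{y=\alpha,\beta}=\vartheta^{n+1}|_{y=\alpha,\beta}=0$ and initial data $(v_0,\vartheta_0)$. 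On a short time interval the bound on $\|\partial_y v^{n}\|_{L^\infty(0,T_0;L^\infty)}$ coming from the $H^2$‑estimate keeps $J^{n+1}$ between two positive constants, so the principal parts are uniformly elliptic in $y$; the only possible degeneracy, in the coefficient $\varrho_0$ of the $t$‑derivative, is of the correct sign and is handled by $\sqrt{\varrho_0}$‑weighted estimates. The assumed compatibility relations $\mu v_0''-R(\varrho_0\vartheta_0)'=\sqrt{\varrho_0}\,g_1$ and $\kappa\vartheta_0''+\mu(v_0')^{2}-Rv_0'\varrho_0\vartheta_0=\sqrt{\varrho_0}\,g_2$ with $g_1,g_2\in L^2$ are precisely what is needed to give $\sqrt{\varrho_0}\,\partial_t v^{n+1}|_{t=0},\ \sqrt{\varrho_0}\,\partial_t\vartheta^{n+1}|_{t=0}\in L^2$, so that differentiating the linear equations in $t$, testing against $\partial_t v^{n+1}$, $\partial_t\vartheta^{n+1}$, and invoking elliptic regularity for the uniformly elliptic operators $f\mapsto-\partial_y(\partial_y f/J^{n+1})$ on $(\alpha,\beta)$ closes a bound, uniform in $n$, for $(v^{n+1},\vartheta^{n+1})$ in $C([0,T_0];H^2)\cap L^2(0,T_0;H^3)$ with $t$‑derivatives in $L^2(0,T_0;H^1)$, on some small $T_0$. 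A contraction estimate for the differences in the weaker norm $C([0,T_0];L^2)\cap L^2(0,T_0;H^1)$ then yields a local solution, and uniqueness in the stated class follows from the same difference estimate.

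For the global extension I would build the following chain of a priori bounds on $[0,T]$. \emph{(i) Energy estimate:} testing (\ref{Eqv}) by $v$ and integrating (\ref{Eqtheta}) over $(\alpha,\beta)$ gives control of $\|\sqrt{\varrho_0}v\|_{L^\infty(0,T;L^2)}$, $\|\sqrt{\varrho_0}\vartheta\|_{L^\infty(0,T;L^1)}$ and $\big\|v_y/\sqrt J\,\big\|_{L^2(0,T;L^2)}$. \emph{(ii) Nonnegativity $\vartheta\ge0$:} writing (\ref{Eqtheta}) as $c_v\varrho_0\vartheta_t-\kappa(\vartheta_y/J)_y+R\frac{\varrho_0}{J}v_y\,\vartheta=\mu|v_y|^{2}/J\ge0$, and using that $v_y/J\in L^\infty_{\mathrm{loc}}$, the parabolic comparison principle (after the substitution $\vartheta=e^{\lambda t}w$ to neutralize the zeroth‑order term) together with $\vartheta_0\ge0$ and $\vartheta|_{y=\alpha,\beta}=0$ gives $\vartheta\ge0$. \emph{(iii) Two‑sided bound on $J$:} since $v_y$ and $\vartheta$ vanish at $y=\alpha,\beta$, the effective flux $G:=\mu\frac{v_y}{J}-R\frac{\varrho_0}{J}\vartheta$ vanishes at $y=\alpha$; integrating $\varrho_0 v_t=G_y$ in $y$ and then using $\mu(\log J)_t=\mu\frac{v_y}{J}=G+R\frac{\varrho_0}{J}\vartheta$ produces a Kazhikhov‑type representation
\[
\mu\log J(y,t)=\int_\alpha^y\varrho_0\,(v-v_0)\,dy'+R\int_0^t\frac{\varrho_0\vartheta}{J}\,d\tau ,
\]
from which, using the energy bound, a positive lower bound for $J$ on $[0,T]$ is immediate, and a finite upper bound follows as well (the latter requiring, in addition, a Gronwall argument based on the thermal energy). \emph{(iv) Higher‑order estimates:} testing the $t$‑differentiated (\ref{Eqv}) and (\ref{Eqtheta}) by $v_t$ and $\vartheta_t$ and combining with the elliptic regularity of the principal parts (now uniformly elliptic by (iii)) bootstraps to the full $H^2$‑in‑$y$ and $L^2(0,T;H^3)$ regularity uniformly on $[0,T]$; the regularity of $J$ then follows from $J=1+\int_0^t v_y$. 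A continuation argument combining the local theory with these estimates gives the global solution.

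The step I expect to be the main obstacle is (iii): extracting the uniform positive lower bound for $J$ --- equivalently, the uniform‑in‑time upper bound for the density $\varrho_0/J$ --- from the low‑regularity energy estimates, since without it the principal parts of (\ref{Eqv})--(\ref{Eqtheta}) may lose uniform ellipticity and the whole bootstrap collapses. A secondary technical point is to propagate the $\sqrt{\varrho_0}$‑ and $\varrho_0$‑weights consistently through all the estimates, so that the argument stays valid when $\varrho_0$ is merely nonnegative and may vanish somewhere in $(\alpha,\beta)$; once these are in place, the remaining parabolic bootstrapping is routine.
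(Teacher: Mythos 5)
Your proposal is correct and follows essentially the same route as the paper, which itself proves Proposition~\ref{PROPGLOBAL} by a single-line reference to \cite{LJK1DNONHEAT} (the companion paper \cite{LJK1DHEAT} treats the heat-conductive case directly). Your outline accurately reconstructs what that reference does: a linearization–iteration local theory driven by the $\sqrt{\varrho_0}$-weighted compatibility conditions, nonnegativity of $\vartheta$ by comparison, the Kazhikhov representation $\mu\log J=\int_\alpha^y\varrho_0(v-v_0)\,dy'+R\int_0^t\varrho_0\vartheta/J\,d\tau$ for the two-sided bound on $J$, and the higher-order bootstrap, with your stated caveat about the upper bound on $J$ requiring a Gr\"onwall step being exactly the point the paper handles in Proposition~\ref{Prop2.1}.
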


The rest of this section is devoted to deriving the a priori estimates, independent of $\alpha$
and $\beta$, on the unique global solution
$(J, v, \vartheta)$ stated in Proposition \ref{PROPGLOBAL}. Keeping this in mind, in the rest of this section, we will always
assume that $(J, v, \vartheta)$ is the solution stated in Proposition \ref{PROPGLOBAL}.

Throughout this section, for simplicity of notations,
the norms $\|\cdot\|_q$ and $\|\cdot\|_{H^1}$ are the corresponding ones on the interval
$(\alpha, \beta)$, that is,
$$
\|\cdot\|_q:=\|\cdot\|_{L^q((\alpha, \beta))}\quad\mbox{and}\quad \|\cdot\|_{H^1}:=\|\cdot\|_{H^1((\alpha,
\beta))}.
$$
Denote
\begin{equation*}
  \label{a0E0}
  m_0:=\int_{\alpha}^{\beta}\varrho_0dy,\quad \mathscr E_0:=\int_{\alpha}^{\beta}\varrho_0\left(\frac{v_0^2}{2}+c_v\vartheta_0\right)dy.
\end{equation*}

\begin{proposition}
\label{BASIC}
It holds that
\begin{equation*}
  \int_{\alpha}^{\beta}\varrho_0\left(\frac{v^2}{2}+c_v\vartheta\right)dy\leq\mathscr E_0.
\end{equation*}
\end{proposition}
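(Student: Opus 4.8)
The plan is to establish the basic energy estimate \textbf{Proposition \ref{BASIC}} by combining the momentum and internal energy equations into a total energy balance and integrating over $(\alpha,\beta)\times(0,t)$, using the boundary conditions (\ref{BC}) to kill all boundary terms. First I would recall that $\varrho_0=\varrho_0(y)$ is time-independent (it is the Lagrangian mass variable), so $\int_\alpha^\beta \varrho_0(\frac{v^2}{2}+c_v\vartheta)\,dy$ is the natural conserved quantity. Multiply the momentum equation (\ref{Eqv}) by $v$ and integrate in $y$ over $(\alpha,\beta)$: the term $\varrho_0 v_t v$ gives $\frac{d}{dt}\int_\alpha^\beta \varrho_0\frac{v^2}{2}\,dy$; integrating the term $-\mu(\frac{v_y}{J})_y v$ by parts produces $\mu\int_\alpha^\beta \frac{|v_y|^2}{J}\,dy$ with boundary contribution $-\mu\frac{v_y}{J}v\big|_\alpha^\beta$, which vanishes because $v_y=0$ at $y=\alpha,\beta$; and integrating $\pi_y v$ by parts produces $-\int_\alpha^\beta \pi v_y\,dy$ with boundary term $\pi v\big|_\alpha^\beta = R\frac{\varrho_0}{J}\vartheta\, v\big|_\alpha^\beta$, which vanishes because $\vartheta=0$ at $y=\alpha,\beta$.

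Next I would integrate the internal energy equation (\ref{Eqtheta}) directly in $y$ over $(\alpha,\beta)$: the term $c_v\varrho_0\vartheta_t$ gives $\frac{d}{dt}\int_\alpha^\beta c_v\varrho_0\vartheta\,dy$; the term $-\kappa(\frac{\vartheta_y}{J})_y$ integrates to the boundary term $-\kappa\frac{\vartheta_y}{J}\big|_\alpha^\beta$, which a priori need not vanish, but one can instead integrate against a cutoff or, more simply, observe that after adding the two balances the problematic heat-flux boundary term is the only surviving boundary contribution, and it has a sign: since $\vartheta\ge 0$ in $(\alpha,\beta)$ and $\vartheta=0$ on the boundary, $\vartheta_y(\alpha)\ge 0$ and $\vartheta_y(\beta)\le 0$, hence $-\kappa\frac{\vartheta_y}{J}\big|_\alpha^\beta = -\kappa\frac{\vartheta_y(\beta)}{J(\beta)}+\kappa\frac{\vartheta_y(\alpha)}{J(\alpha)}\ge 0$, so it appears on the favorable side. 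Adding the $v$-weighted momentum identity to the internal energy identity, the terms $-\int_\alpha^\beta \pi v_y\,dy$ (from momentum) and $+\int_\alpha^\beta v_y\pi\,dy$ (from energy) cancel exactly, the dissipation term $\mu\int_\alpha^\beta\frac{|v_y|^2}{J}\,dy$ from momentum cancels the right-hand side $\mu\int_\alpha^\beta \frac{|v_y|^2}{J}\,dy$ of the energy equation, and we are left with
\begin{equation*}
\frac{d}{dt}\int_\alpha^\beta \varrho_0\Big(\frac{v^2}{2}+c_v\vartheta\Big)\,dy \;=\; -\,\kappa\frac{\vartheta_y}{J}\Big|_{y=\alpha}^{y=\beta}\;\le\;0.
\end{equation*}
Integrating in time from $0$ to $t$ and using the initial data (\ref{IC}) together with $J|_{t=0}=1$ gives
\begin{equation*}
\int_\alpha^\beta \varrho_0\Big(\frac{v^2}{2}+c_v\vartheta\Big)\,dy \;\le\; \int_\alpha^\beta \varrho_0\Big(\frac{v_0^2}{2}+c_v\vartheta_0\Big)\,dy \;=\;\mathscr E_0,
\end{equation*}
which is the claimed bound.

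The main obstacle I anticipate is the rigorous justification of the integrations by parts and of the sign of the boundary heat-flux term at the level of the regularity available from \textbf{Proposition \ref{PROPGLOBAL}}; in particular one needs $v,\vartheta\in C([0,T];H^2((\alpha,\beta)))$ so that traces of $v_y$ and $\vartheta_y$ make sense pointwise, and one must legitimately differentiate the integral in $t$ (justified since $v,\vartheta\in C([0,T];H^2)$ with $v_t,\vartheta_t\in L^2(0,T;H^1)$). The sign claim $\vartheta_y(\alpha,t)\ge 0\ge \vartheta_y(\beta,t)$ uses $\vartheta\ge 0$ on $(\alpha,\beta)\times(0,T)$ (asserted in Proposition \ref{PROPGLOBAL}) combined with $\vartheta(\alpha,t)=\vartheta(\beta,t)=0$; this is elementary but should be stated. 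Everything else is bookkeeping: all boundary terms except the heat flux vanish thanks to $v_y|_{\alpha,\beta}=0$ and $\vartheta|_{\alpha,\beta}=0$, the pressure work terms cancel between the two equations, and the viscous dissipation cancels the frictional heating, leaving a pure energy identity with a sign-definite boundary loss.
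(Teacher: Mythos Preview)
Your approach is correct and essentially identical to the paper's: multiply (\ref{Eqv}) by $v$, integrate (\ref{Eqtheta}) directly, use the boundary conditions $v_y|_{\alpha,\beta}=0$ and $\vartheta|_{\alpha,\beta}=0$ together with $\vartheta\ge0$ to get the sign of the heat-flux boundary term, then add and integrate in time. One small bookkeeping slip: in your final displayed identity the right-hand side should read $+\kappa\frac{\vartheta_y}{J}\big|_{\alpha}^{\beta}\le 0$ (not $-\kappa\frac{\vartheta_y}{J}\big|_{\alpha}^{\beta}$), since you correctly established earlier that $-\kappa\frac{\vartheta_y}{J}\big|_{\alpha}^{\beta}\ge 0$ sits on the left; the conclusion is unaffected.
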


\begin{proof}
Multiplying (\ref{Eqv}) with $v$, integrating over $(\alpha, \beta)$, and by the boundary conditions,
one gets by integration by parts that
\begin{equation}
  \frac12\frac{d}{dt}\int_{\alpha}^{\beta}\varrho_0v^2dy+\mu\int_{\alpha}^{\beta}\frac{|v_y|^2}{J}dy-\int_{\alpha}^{\beta}v_y\pi dy=0.\label{BS-1}
\end{equation}
Since $\vartheta\geq0$ in $(\alpha, \beta)\times(0,\infty)$, it is clear that $\vartheta_y(\alpha,t)\geq0$ and $\vartheta_y(\beta,t)\leq0$,
for any $t\in(0,\infty)$. As are result, integrating (\ref{Eqtheta}) over $(\alpha, \beta)$ and integration by parts yield
\begin{equation}
  c_v\frac{d}{dt}\int_{\alpha}^{\beta}\varrho_0\vartheta dy+\int_{\alpha}^{\beta}v_y\pi dy\leq\mu\int_{\alpha}^{\beta}\frac{|v_y|^2}{J}dy.\label{BS-2}
\end{equation}
Summing (\ref{BS-1}) with (\ref{BS-2}) and integrating with respect to $t$ lead to the conclusion.
\end{proof}

\begin{proposition}\label{Prop2.1}
It holds that
$$
 e^{-\frac2\mu\sqrt{2m_0\mathscr E_0}}\leq J\leq e^{\frac4\mu\sqrt{2m_0\mathscr E_0}}\left(1+\frac R\mu\int_0^t\varrho_0\vartheta d\tau\right),\quad\forall t\in(0,\infty).
$$
\end{proposition}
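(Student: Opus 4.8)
The plan is to derive, for each fixed $y$, an explicit representation of $J(y,\cdot)$ by solving a linear ODE in $t$, and then to read off both bounds directly from the energy inequality of Proposition \ref{BASIC}. Throughout, $(J,v,\vartheta)$ is the solution from Proposition \ref{PROPGLOBAL}, so in particular $J>0$ and $\vartheta\ge0$ on $(\alpha,\beta)\times(0,\infty)$.

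First I would introduce the momentum potential
\[
Y(y,t):=\int_\alpha^y\varrho_0(z)v(z,t)\,dz,\qquad Y_0(y):=Y(y,0)=\int_\alpha^y\varrho_0(z)v_0(z)\,dz.
\]
Since $\int_\alpha^\beta\varrho_0\,dz=m_0$ and, by Proposition \ref{BASIC} together with $\vartheta\ge0$, $\int_\alpha^\beta\varrho_0 v^2\,dz\le 2\mathscr E_0$ (and likewise $\int_\alpha^\beta\varrho_0 v_0^2\,dz\le 2\mathscr E_0$), the Cauchy--Schwarz inequality gives the uniform bounds $|Y(y,t)|\le\sqrt{2m_0\mathscr E_0}$ and $|Y_0(y)|\le\sqrt{2m_0\mathscr E_0}$ for all $y\in(\alpha,\beta)$ and $t\ge0$; hence also $|Y(y,t)-Y(y,\tau)|\le 2\sqrt{2m_0\mathscr E_0}$ for all $t,\tau$.

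Next, with $G:=\mu\frac{v_y}{J}-R\frac{\varrho_0}{J}\vartheta$, equation (\ref{Eqv}) reads $\varrho_0 v_t=G_y$, and the boundary condition (\ref{BC}) forces $G(\alpha,t)=0$; integrating in $y$ over $(\alpha,y)$ and differentiating $Y$ in time under the integral sign (legitimate by the regularity in Proposition \ref{PROPGLOBAL}) yields $G(y,t)=\int_\alpha^y\varrho_0 v_t\,dz=Y_t(y,t)$. Since $J_t=v_y$ by (\ref{EqJ}), we also have $G=\mu\frac{J_t}{J}-R\frac{\varrho_0}{J}\vartheta$; multiplying by $J>0$ gives the linear first-order ODE in $t$, with $y$ a parameter,
\[
\mu J_t-Y_t\,J=R\varrho_0\vartheta .
\]
Solving it with the integrating factor $e^{-Y/\mu}$ and the initial value $J|_{t=0}=1$ produces the representation
\[
J(y,t)=e^{\frac{Y(y,t)-Y_0(y)}{\mu}}+\frac{R}{\mu}\int_0^t e^{\frac{Y(y,t)-Y(y,\tau)}{\mu}}\,\varrho_0(y)\vartheta(y,\tau)\,d\tau .
\]

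Finally, since $\vartheta\ge0$, dropping the (nonnegative) integral term and using $|Y(y,t)-Y_0(y)|\le 2\sqrt{2m_0\mathscr E_0}$ gives $J\ge e^{-\frac2\mu\sqrt{2m_0\mathscr E_0}}$; bounding both exponentials by $e^{\frac2\mu\sqrt{2m_0\mathscr E_0}}$ and factoring gives $J\le e^{\frac2\mu\sqrt{2m_0\mathscr E_0}}\big(1+\frac R\mu\int_0^t\varrho_0\vartheta\,d\tau\big)$, which is even slightly stronger than the asserted upper bound (enlarge $e^{\frac2\mu\sqrt{\cdot}}$ to $e^{\frac4\mu\sqrt{\cdot}}$). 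Everything but the representation formula is routine; the one point requiring care is the identity $G=Y_t$, which depends crucially on the boundary conditions $v_y=\vartheta=0$ at $y=\alpha$ to kill the boundary term $G(\alpha,t)$, and on having enough regularity to differentiate $Y$ in time under the integral — both supplied by Proposition \ref{PROPGLOBAL}.
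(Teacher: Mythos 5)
Your proposal is correct and arrives at precisely the same representation formula for $J$ as the paper, namely
\begin{equation*}
J(y,t)=e^{\frac{Y(y,t)-Y_0(y)}{\mu}}+\frac{R}{\mu}\int_0^t e^{\frac{Y(y,t)-Y(y,\tau)}{\mu}}\,\varrho_0(y)\vartheta(y,\tau)\,d\tau,
\end{equation*}
with the same Cauchy--Schwarz control $|Y|\le\sqrt{2m_0\mathscr E_0}$ from Proposition \ref{BASIC} closing the argument. The organization differs slightly and is, if anything, cleaner: the paper first integrates the relation $\mu(\log J)_{yt}=\varrho_0 v_t+\pi_y$ in $t$ and then in $y$ to obtain $\mu\log J=\int_\alpha^y\varrho_0(v-v_0)\,dz+\int_0^t\pi\,ds$, exponentiates, and then extracts the factor $1+\frac R\mu\int_0^t\varrho_0\vartheta e^{-\frac1\mu\int_\alpha^y\varrho_0(v-v_0)dz}\,ds$ via a differentiate-and-substitute manipulation of $e^{\frac1\mu\int_0^t\pi\,ds}$; you instead identify $G=Y_t$ directly from $\varrho_0 v_t=G_y$ and the boundary condition $G(\alpha,t)=0$, rewrite $G=\mu J_t/J-R\varrho_0\vartheta/J$, and solve the resulting linear first-order ODE $\mu J_t-Y_tJ=R\varrho_0\vartheta$ with the integrating factor $e^{-Y/\mu}$. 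This bypasses the $\log J$ detour, and since you keep the two exponentials separate you even obtain the marginally sharper constant $e^{\frac2\mu\sqrt{2m_0\mathscr E_0}}$ in the upper bound.
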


\begin{proof}
Since $v_y|_{y=\alpha}=0$ and $J|_{t=0}=1$, it follows from (\ref{EqJ}) that $J|_{y=\alpha}=1$. Substituting (\ref{EqJ})
into (\ref{Eqv}) yields
$$
\varrho_0v_t-\mu(\log J)_{yt}+\pi_y=0,
$$
from which, integrating over $(0,t)$ and using $J|_{t=0}=1$, one can get
$$
\varrho_0(v-v_0)+\int_0^t\pi_y ds=\mu(\log J)_y.
$$
Integrating this over $(\alpha,y)$ and noticing that $J|_{y=\alpha}=1$ and
$\pi|_{y=\alpha}=R\frac{\varrho_0}{J}\vartheta|_{y=\alpha}=0$, one gets
$$
\int_\alpha^y\varrho_0(v-v_0)dz+\int_0^t\pi ds=\mu\log J,
$$
which leads to
\begin{equation}
  J=e^{\frac1\mu\left(\int_\alpha^y\varrho_0(v-v_0)dz+\int_0^t\pi ds\right) }. \label{BJ-1}
\end{equation}
It follows from Proposition \ref{BASIC} and the H\"older inequality that
\begin{eqnarray}
\int_\alpha^\beta\varrho_0(|v|+|v_0|)dz &\leq&\left(\int_\alpha^\beta\varrho_0 dz\right)^\frac12
  \left[\left(\int_\alpha^\beta\varrho_0v^2dz\right)^\frac12+\left(\int_\alpha^\beta\varrho_0v_0^2dz\right)^\frac12\right]
  \nonumber\\
  &\leq& 2\sqrt{2 m_0\mathscr E_0}. \label{BJ-2}
\end{eqnarray}
With the aid of (\ref{BJ-2}) and since $\pi\geq0$, it follows from (\ref{BJ-1}) that
\begin{equation}
  J\geq e^{-\frac1\mu\int_\alpha^\beta\varrho_0(|v|+|v_0|)dz}\geq e^{-\frac2\mu\sqrt{2m_0\mathscr E_0}}.\label{BJ-3}
\end{equation}
Rewrite (\ref{BJ-1}) as
$Je^{-\frac1\mu \int_\alpha^y\varrho_0(v-v_0)dz }=e^{\frac1\mu \int_0^t\pi ds}$. Thus
$$
\frac1\mu J\pi \exp\left\{-\frac1\mu \int_\alpha^y\varrho_0(v-v_0)dz \right\} =
\partial_t (e^{\frac1\mu \int_0^t\pi ds} ).
$$
Hence, one gets by noticing $J\pi=R\varrho_0\vartheta$ that
\begin{equation*}
\exp\left\{\frac1\mu \int_0^t\pi ds\right\}
 = 1+\frac R\mu\int_0^t\varrho_0\vartheta \exp\left\{-\frac1\mu \int_\alpha^y\varrho_0(v-v_0)dz \right\}ds.
\end{equation*}
Substituting this into (\ref{BJ-1}) and using (\ref{BJ-2}) lead to
\begin{eqnarray*}
  J&=&e^{\frac1\mu \int_\alpha^y\varrho_0(v-v_0)dz}\left(1+\frac R\mu\int_0^t\varrho_0\vartheta \exp\left\{-\frac1\mu \int_\alpha^y\varrho_0(v-v_0)dz \right\}ds\right) \\
  &\leq&e^{\frac4\mu\sqrt{2m_0\mathscr E_0}}\left(1+\frac R\mu\int_0^t\varrho_0\vartheta ds\right).
\end{eqnarray*}
Combining this with (\ref{BJ-3}) yields the conclusion.
\end{proof}

In the rest of this section, we will always assumed that $C$ is a general
positive constant depending only on $R, c_v, \mu, \kappa,
K_1, T$, and the upper bound of $\mathscr N_0$, but independent of $\alpha$ and $\beta$ with $\beta-\alpha\geq1$,
where
\begin{equation}\label{N0}
\mathscr N_0:=\|\varrho_0\|_\infty+m_0+\mathscr E_0+\left\|\left(\sqrt{\varrho_0} v_0^2, v_0', v_0'', \sqrt{\varrho_0}\vartheta_0, \sqrt{\varrho_0}\vartheta_0', \sqrt{\varrho_0}\vartheta_0'',
G_0, \frac{G_0'}{\sqrt{\varrho_0}}\right)\right\|_2.
\end{equation}

\begin{proposition}
  \label{Prop2.2}
  It holds that
  $$
  \sup_{0\leq t\leq T}\|(\sqrt{\varrho_0}v^2, \sqrt{\varrho_0}\vartheta)\|_2^2+\int_0^T\left(
  \|\sqrt{\varrho_0}\vartheta\|_\infty^2+\left\|\frac{vv_y}{\sqrt J}\right\|_2^2+  \left\|\frac{\vartheta_y}{\sqrt J}\right\|_2^2\right)dt\leq C.
  $$
\end{proposition}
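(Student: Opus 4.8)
The plan is to derive two differential inequalities and then close the resulting coupled system. Throughout I write $G:=\mu\frac{v_y}{J}-R\frac{\varrho_0}{J}\vartheta$, so that $\pi=R\frac{\varrho_0}{J}\vartheta$ satisfies $\mu\frac{v_y}{J}=G+\pi$, and I shall use repeatedly: $\varrho_0v_t=G_y$ (from (\ref{Eqv})); the conservative form
\begin{equation*}
\varrho_0\left(\tfrac{v^2}{2}+c_v\vartheta\right)_t=\left(vG+\kappa\tfrac{\vartheta_y}{J}\right)_y,
\end{equation*}
which follows from (\ref{Eqv}) and (\ref{Eqtheta}) upon writing $\mu\frac{v_y^2}{J}=v_y(G+\pi)$; and, from Proposition \ref{Prop2.1}, $J^{-1}\le C$, whence $JG^2=(\mu v_y-R\varrho_0\vartheta)^2J^{-1}\le C(v_y^2+\varrho_0^2\vartheta^2)$. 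At $y=\alpha,\beta$ one has $v_y=\vartheta=0$, hence $\pi=G=0$, while $\vartheta\ge0$ gives $\vartheta_y(\alpha,\cdot)\ge0\ge\vartheta_y(\beta,\cdot)$; so every boundary term produced below either vanishes or has the favorable sign.

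First I would multiply (\ref{Eqv}) by $v^3$ and integrate over $(\alpha,\beta)$; integration by parts gives $\frac14\frac{d}{dt}\int\varrho_0v^4+3\mu\int\frac{v^2v_y^2}{J}=3\int\pi v^2v_y$, and writing the right side as $3\int(\pi v\sqrt J)(vv_y/\sqrt J)$ and using $\pi^2J\le C\varrho_0^2\vartheta^2=C\varrho_0(\sqrt{\varrho_0}\vartheta)^2$ together with $\int\varrho_0v^2\le2\mathscr E_0$ (Proposition \ref{BASIC}), one absorbs part of it into the dissipation to reach
\begin{equation*}
\frac{d}{dt}\|\sqrt{\varrho_0}v^2\|_2^2+c\|vv_y/\sqrt J\|_2^2\le C\|\sqrt{\varrho_0}\vartheta\|_\infty^2 .
\end{equation*}
Next I would multiply the conservative identity by $E:=\frac{v^2}{2}+c_v\vartheta$; since $E_y=vv_y+c_v\vartheta_y$, integration by parts yields
\begin{equation*}
\tfrac12\tfrac{d}{dt}\int_\alpha^\beta\varrho_0E^2\,dy+\kappa c_v\int_\alpha^\beta\tfrac{\vartheta_y^2}{J}\,dy\le-\int_\alpha^\beta v^2v_yG\,dy-c_v\int_\alpha^\beta v\vartheta_yG\,dy-\kappa\int_\alpha^\beta\tfrac{vv_y\vartheta_y}{J}\,dy .
\end{equation*}
The essential structural point is that the rough factor $v_y$ here occurs only multiplied by $v$, never by $\vartheta$: using $JG^2\le C(v_y^2+\varrho_0^2\vartheta^2)$ and Young's inequality, each of the three terms on the right is dominated by a small multiple of $\|vv_y/\sqrt J\|_2^2+\|\vartheta_y/\sqrt J\|_2^2$ plus $C\|\sqrt{\varrho_0}\vartheta\|_\infty^2$ (the last because $v^2\varrho_0^2\vartheta^2=\varrho_0v^2(\sqrt{\varrho_0}\vartheta)^2$ and $\int\varrho_0v^2\le2\mathscr E_0$). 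Integrating in $t$ and feeding in the bound for $\int_0^T\|vv_y/\sqrt J\|_2^2$ coming from the $v^4$-estimate, one obtains
\begin{equation*}
\sup_{0\le t\le T}\left(\|\sqrt{\varrho_0}v^2\|_2^2+\|\sqrt{\varrho_0}\vartheta\|_2^2\right)+\int_0^T\left(\|vv_y/\sqrt J\|_2^2+\|\vartheta_y/\sqrt J\|_2^2\right)dt\le C+C\int_0^T\|\sqrt{\varrho_0}\vartheta\|_\infty^2\,dt .
\end{equation*}

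It remains to absorb $\int_0^T\|\sqrt{\varrho_0}\vartheta\|_\infty^2\,dt$. From $\vartheta(\alpha,t)=0$ and (H1) --- which gives $|(\sqrt{\varrho_0}\vartheta)_y|\le\frac{K_1}{2}\sqrt{\varrho_0}\,\vartheta+\sqrt{\varrho_0}|\vartheta_y|$ --- the fundamental theorem of calculus yields $\|\sqrt{\varrho_0}\vartheta\|_\infty^2\le C\|\sqrt{\varrho_0}\vartheta\|_2^2+C\|\sqrt{\varrho_0}\vartheta\|_2\,\|\sqrt{\varrho_0}\vartheta_y\|_2$; since $\|\sqrt{\varrho_0}\vartheta_y\|_2^2=\int\varrho_0J\cdot\frac{\vartheta_y^2}{J}\le\|\varrho_0J\|_\infty\|\vartheta_y/\sqrt J\|_2^2$, Young's inequality turns this into $\|\sqrt{\varrho_0}\vartheta\|_\infty^2\le\varepsilon\|\vartheta_y/\sqrt J\|_2^2+C(1+\|\varrho_0J\|_\infty)\|\sqrt{\varrho_0}\vartheta\|_2^2$, while Proposition \ref{Prop2.1} together with $\varrho_0\vartheta\le\sqrt{\|\varrho_0\|_\infty}\,\|\sqrt{\varrho_0}\vartheta\|_\infty$ gives $\|\varrho_0J\|_\infty\le C+C\big(\int_0^t\|\sqrt{\varrho_0}\vartheta\|_\infty^2\,d\tau\big)^{1/2}$. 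Substituting these into the previous display, choosing $\varepsilon$ small to absorb the $\vartheta_y/\sqrt J$-dissipation, and running a Gronwall argument --- legitimate because for each fixed $(\alpha,\beta)$ the solution from Proposition \ref{PROPGLOBAL} is smooth, so all the quantities involved are a priori finite, and one then checks that the bound obtained depends on the data only through $R,c_v,\mu,\kappa,K_1,T$ and $\mathscr N_0$, not on $\alpha,\beta$ --- produces the assertion.

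I expect the energy estimate for $\vartheta$ to be the crux. Testing (\ref{Eqtheta}) against $\vartheta$ in the naive way generates the term $\mu\int\frac{|v_y|^2\vartheta}{J}$, which cannot be handled at this stage because no control of $\int\frac{v_y^2}{J}$ is yet available --- indeed the viscous dissipation heats the gas and is absent from the basic energy balance of Proposition \ref{BASIC}. The remedy is to work instead with the conservative form $\varrho_0E_t=(vG+\kappa\frac{\vartheta_y}{J})_y$ and to exploit $JG=\mu v_y-R\varrho_0\vartheta$, which replaces the offending term by $\int\frac{v^2v_y^2}{J}$ (controlled by the $v^4$-estimate) plus lower-order terms weighted by $\varrho_0^2\vartheta^2$ (controlled by $\|\sqrt{\varrho_0}\vartheta\|_\infty$). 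A secondary difficulty is that the only available upper bound for $J$ grows in time through $\int_0^t\varrho_0\vartheta\,d\tau$, so the coefficient $\|\varrho_0J\|_\infty$ entering the Gronwall estimate must itself be fed back against $\int_0^t\|\sqrt{\varrho_0}\vartheta\|_\infty^2\,d\tau$; closing this loop consistently is the main bookkeeping burden of the proof.
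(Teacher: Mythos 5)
The structural skeleton of your argument — the conservative identity for $E=\tfrac{v^2}{2}+c_v\vartheta$ tested against $E$, the auxiliary $v^4$-estimate to control $\|vv_y/\sqrt J\|_2^2$, and then a pointwise estimate for $\|\sqrt{\varrho_0}\vartheta\|_\infty$ closed by Gr\"onwall — is exactly the paper's. Two points, one cosmetic and one substantive.

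The cosmetic point: your phrase ``each of the three terms \ldots is dominated by a small multiple of $\|vv_y/\sqrt J\|_2^2+\|\vartheta_y/\sqrt J\|_2^2$'' is not right as stated. The terms $-c_v\int v\vartheta_y G$ and $-\kappa\int vv_y\vartheta_y/J$ each produce an $O(1)$ (not small) multiple of $\|vv_y/\sqrt J\|_2^2$, because the $E$-estimate carries no such dissipation on the left. You in fact fall back on the $v^4$-estimate precisely to absorb this, which is what the paper does (it sums the two differential inequalities with the coefficient $A_1/(3\mu)$); so no harm done, but the phrasing should reflect that. Also, after invoking $JG^2\le C(v_y^2+\varrho_0^2\vartheta^2)$ you have lost the factor $1/J$ that you actually need for $\int v^2 JG^2 \le C\|vv_y/\sqrt J\|_2^2 + C\|\sqrt{\varrho_0}\vartheta\|_\infty^2$; keep the exact identity $JG^2=(\mu v_y-R\varrho_0\vartheta)^2/J$ and distribute the $1/J$ onto the two pieces rather than discarding it.

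The substantive point is the $\|\sqrt{\varrho_0}\vartheta\|_\infty$ estimate, and here there is a genuine gap. You obtain
\begin{equation*}
\|\sqrt{\varrho_0}\vartheta\|_\infty^2\le\varepsilon\left\|\frac{\vartheta_y}{\sqrt J}\right\|_2^2
+C\bigl(1+\|\varrho_0 J\|_\infty\bigr)\|\sqrt{\varrho_0}\vartheta\|_2^2,
\qquad
\|\varrho_0 J\|_\infty\le C+C\left(\int_0^t\|\sqrt{\varrho_0}\vartheta\|_\infty^2\,d\tau\right)^{\!1/2}.
\end{equation*}
Since $\|\sqrt{\varrho_0}\vartheta\|_2^2$ is itself one of the unknowns (Proposition \ref{BASIC} controls $\|\varrho_0\vartheta\|_1$, not $\|\sqrt{\varrho_0}\vartheta\|_2^2$), feeding this into your main display and writing $Y(t)=\|\sqrt{\varrho_0}E\|_2^2+\|\sqrt{\varrho_0}v^2\|_2^2+\int_0^t\|\sqrt{\varrho_0}\vartheta\|_\infty^2\,d\tau$ yields a differential inequality of the form $Y'\le C\,Y(1+\sqrt{Y})$. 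This is a superlinear Gr\"onwall whose comparison solution blows up at a finite time determined by $Y(0)$, i.e.\ by $\mathscr N_0$; the finiteness of all quantities for the smooth approximate solution does not save you, because the comparison still fails past the blow-up time, and the resulting bound is therefore only available for $T$ small depending on the data, not for arbitrary $T$ as the proposition requires. The paper avoids this at exactly the step $\int\varrho_0\vartheta\,|\vartheta_y|$: instead of Cauchy--Schwarz against $\|\sqrt{\varrho_0}\vartheta\|_2\|\sqrt{\varrho_0}\vartheta_y\|_2$, one splits $\varrho_0\vartheta=(\varrho_0\vartheta)^{1/2}\varrho_0^{1/4}(\sqrt{\varrho_0}\vartheta)^{1/2}$ and uses the \emph{conserved} quantity $\|\varrho_0\vartheta\|_1\le\mathscr E_0/c_v$ from Proposition \ref{BASIC}, so that only a half-power $\|\sqrt{\varrho_0}\vartheta\|_\infty^{1/2}$ appears on the right-hand side; that half-power can be absorbed into the left, and together with $\|J\|_\infty^{1/2}\le C(1+(\int_0^t\|\sqrt{\varrho_0}\vartheta\|_\infty^2)^{1/4})$ one lands on the \emph{linear} inequality (the paper's (2.5))
\begin{equation*}
\|\sqrt{\varrho_0}\vartheta\|_\infty^2\le\epsilon\left\|\frac{\vartheta_y}{\sqrt J}\right\|_2^2
+C_\epsilon\left(1+\|\sqrt{\varrho_0}\vartheta\|_2^2+\int_0^t\|\sqrt{\varrho_0}\vartheta\|_\infty^2\,d\tau\right),
\end{equation*}
for which Gr\"onwall closes on any $[0,T]$. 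Using the $L^1$ conservation rather than the $L^2$ norm is the missing idea.
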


\begin{proof}
Set $E=\frac{v^2}{2}+c_v\vartheta$. Then, it follows from (\ref{Eqv}) and (\ref{Eqtheta}) that
$$
\varrho_0E_t-\kappa\left(\frac{\vartheta_y}{J}\right)_y=\left(\mu\frac{vv_y}{J}-R\frac{\varrho_0}{J}\vartheta v\right)_y.
$$
Note that $\vartheta_y(\alpha,t)\geq0$ and $\vartheta_y(\beta,t)\leq0$ due to the boundary condition $\vartheta|_{y=\alpha,\beta}=0$ and
the fact that $\vartheta\geq0$ in $(\alpha, \beta)\times(0,\infty)$. Multiplying the above equation with $E$ and integration by parts yield
\begin{eqnarray*}
  &&\frac12\frac{d}{dt}\|\sqrt{\varrho_0}E\|_2^2+\kappa c_v\int_\alpha^\beta\frac{|\vartheta_y|^2}{J}dy-\kappa E\frac{\vartheta_y}{J}\Big|_{y=\alpha}^\beta\\
&\leq& -\int_\alpha^\beta\left(\mu\frac{vv_y}{J}-R\frac{\varrho_0}{J}\vartheta v\right)(vv_y+c_v\vartheta_y) dy
-\kappa\int_\alpha^\beta\frac{\vartheta_y}{J}vv_ydy\\
  &\leq&\frac{\kappa c_v}2\left\|\frac{\vartheta_y}{\sqrt J}\right\|_2^2+C\int_\alpha^\beta\frac1J\left(|vv_y|^2+\varrho_0^2 v^2\vartheta^2\right)dy,
\end{eqnarray*}
and thus, by the Cauchy inequality and that $-\kappa E\frac{\vartheta_y}{J}\Big|_{y=\alpha}^\beta\geq0$, it follows that
\begin{equation}
  \label{EstE-1}
  \frac{d}{dt}\|\sqrt{\varrho_0}E\|_2^2+\kappa c_v\left\|\frac{\vartheta_y}{\sqrt J}\right\|_2^2
  \leq A_1\left\|\frac{vv_y}{\sqrt J}\right\|_2^2+A_1\int_\alpha^\beta\frac1J\varrho_0^2v^2\vartheta^2dy,
\end{equation}
for a positive constant $A_1$ depending only on $\kappa, c_v, \mu,$ and $R$.
Multiplying (\ref{Eqv}) with $4v^3$, using the boundary conditions, and
integration by parts, one deduces
\begin{eqnarray*}
  \frac{d}{dt}\int_\alpha^\beta\varrho_0 v^4dy+12\mu\left\|\frac{vv_y}{\sqrt J}\right\|_2^2=12R\int_\alpha^\beta
  \frac1Jv^2 v_y\varrho_0\vartheta dy\\
  \leq6\mu\left\|\frac{vv_y}{\sqrt J}\right\|_2^2+C\int_\alpha^\beta\frac1J\varrho_0^2v^2\vartheta^2dy,
\end{eqnarray*}
and thus,
\begin{equation}
  \frac{d}{dt}\int_\alpha^\beta\varrho_0 v^4dy+6\mu\left\|\frac{vv_y}{\sqrt J}\right\|_2^2
  \leq C\int_\alpha^\beta\frac1J\varrho_0^2v^2\vartheta^2dy. \label{EstE-2}
\end{equation}
Multiplying (\ref{EstE-2}) with $\frac{A_1}{3\mu}$ and summing the resultant with (\ref{EstE-1}) yield
\begin{equation*}
  \frac{d}{dt}\left(\|\sqrt{\varrho_0}E\|_2^2+\frac{A_1}{3\mu}\|\sqrt{\varrho_0}v^2\|_2^2\right)
  +\kappa c_v\left\|\frac{\vartheta_y}{\sqrt J}\right\|_2^2+A_1\left\|\frac{vv_y}{\sqrt J}\right\|_2^2
  \leq C\int_\alpha^\beta\frac1J\varrho_0^2v^2\vartheta^2dy,
\end{equation*}
from which, by Proposition \ref{BASIC} and Proposition \ref{Prop2.1}, one gets
\begin{align}
   \frac{d}{dt}\left(\|\sqrt{\varrho_0}E\|_2^2+\frac{A_1}{3\mu}\|\sqrt{\varrho_0}v^2\|_2^2\right)
  +\kappa c_v\left\|\frac{\vartheta_y}{\sqrt J}\right\|_2^2+A_1\left\|\frac{vv_y}{\sqrt J}\right\|_2^2\nonumber\\
  \leq C\|\sqrt{\varrho_0}v\|_2^2\|\sqrt\varrho_0\vartheta\|_\infty^2 \leq C\|\sqrt{\varrho_0}\vartheta\|_\infty^2.\label{2.4}
\end{align}

Since $\vartheta|_{y=\alpha}=0$, it follows from Proposition \ref{Prop2.1}, the H\"older and Young inequalities, and (H1) that
\begin{eqnarray}
  \varrho_0\vartheta^2 &=&\int_\alpha^y(\varrho_0\vartheta^2)_ydz=\int_\alpha^y(\varrho_0'\vartheta^2 +2\varrho_0\vartheta
  \vartheta_y)dz\nonumber\\
  &\leq&\int_{\alpha}^{\beta}\left(K_1\varrho_0\vartheta^2+2\varrho_0\vartheta\frac{\vartheta_y}{\sqrt J}\sqrt J\right)dz\nonumber\\
  &\leq&K_1\|\sqrt{\varrho_0}\vartheta\|_2^2+2\|\varrho_0\vartheta\|_1^{\frac12}
  \|\varrho_0\|_\infty^{\frac14}\|\sqrt{\varrho_0}\vartheta
  \|_\infty^{\frac12}\left\|\frac{\vartheta_y}{\sqrt J}\right\|_2\|J\|_\infty^{\frac12}\nonumber\\
  &\leq&K_1\|\sqrt{\varrho_0}\vartheta\|_2^2+C \|\sqrt{\varrho_0}\vartheta
  \|_\infty^{\frac12}\left\|\frac{\vartheta_y}{\sqrt J}\right\|_2  \left(1+ \int_0^t\|\varrho_0\vartheta\|_\infty d\tau\right)^{\frac12}\nonumber\\
  &\leq&K_1\|\sqrt{\varrho_0}\vartheta\|_2^2+C\|\sqrt{\varrho_0}\vartheta
  \|_\infty^{\frac12}\left\|\frac{\vartheta_y}{\sqrt J}\right\|_2\left[1+\left(\int_0^t\|\varrho_0\vartheta\|_\infty^2 d\tau\right)^{\frac14}\right]\nonumber\\
  &\leq&\frac12\left(\|\sqrt{\varrho_0}\vartheta\|_\infty^2+\epsilon\left\|\frac{\vartheta_y}{\sqrt J}\right\|_2^2\right)
  +C\left(1+\|\sqrt{\varrho_0}\vartheta\|_2^2+\int_0^t\|\sqrt{\varrho_0}\vartheta\|_\infty^2d\tau\right),\nonumber
\end{eqnarray}
and thus
\begin{equation}
  \label{2.5}
  \|\sqrt{\varrho_0}\vartheta\|_\infty^2\leq\epsilon\left\|\frac{\vartheta_y}{\sqrt J}\right\|_2^2+C_\epsilon\left(1+\|\sqrt{\varrho_0}\vartheta\|_2^2+\int_0^t\|\sqrt{\varrho_0}\vartheta\|_\infty^2d\tau\right)
\end{equation}
for any $\epsilon>0$. Choosing $\epsilon$ sufficiently small and plugging (\ref{2.5}) into (\ref{2.4}) yield
\begin{align}
  \frac{d}{dt}\left(\|\sqrt{\varrho_0}E\|_2^2+\frac{A_1}{3\mu}\|\sqrt{\varrho_0}v^2\|_2^2\right)+A_1\left\|\frac{vv_y}{\sqrt J}\right\|_2^2+\frac{\kappa c_v}2\left\|\frac{\vartheta_y}{\sqrt J}\right\|_2^2\nonumber\\
  \leq C\left(1+\|\sqrt{\varrho_0}\vartheta\|_2^2+\int_0^t\|\sqrt{\varrho_0}\vartheta\|_\infty^2d\tau\right).\label{2.6}
\end{align}
Combining (\ref{2.5}) with (\ref{2.6}) leads to
\begin{eqnarray*}
  \frac{d}{dt}\left(\|\sqrt{\varrho_0}E\|_2^2+\frac{A_1}{3\mu}\|\sqrt{\varrho_0}v^2\|_2^2+\int_0^t
  \|\sqrt{\varrho_0}\vartheta\|_\infty^2d\tau\right)+A_1\left\|\frac{vv_y}{\sqrt J}\right\|_2^2+\frac\kappa2 \left\|\frac{\vartheta_y}{\sqrt J}\right\|_2^2 \\
  \leq C\left(1+\|\sqrt{\varrho_0}E\|_2^2+\int_0^t\|\sqrt{\varrho_0}\vartheta\|_\infty^2d\tau\right),
\end{eqnarray*}
which, together with the Gr\"onwall inequality, implies that
$$
\sup_{0\leq t\leq T}\|\sqrt{\varrho_0}E\|_2^2+\int_0^T\left(
  \|\sqrt{\varrho_0}\vartheta\|_\infty^2+\left\|\frac{vv_y}{\sqrt J}\right\|_2^2+  \left\|\frac{\vartheta_y}{\sqrt J}\right\|_2^2\right)dt\leq C.
$$
This completes the proof of the conclusion.
\end{proof}

\begin{corollary}
  \label{Cor2.1}
  There are two positive constants $\underline C$ and $\overline C$, such that
  $$
  \underline C\leq J\leq\overline C\quad\mbox{on }(\alpha, \beta)\times(0,T),\quad
  \int_0^T\|v_y\|_2^2dt\leq C.
  $$
\end{corollary}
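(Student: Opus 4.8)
The plan is to read off the lower bound for $J$ directly from Proposition \ref{Prop2.1}: with $\underline C := e^{-\frac{2}{\mu}\sqrt{2m_0\mathscr E_0}}$ one has $J \ge \underline C$ on $(\alpha,\beta)\times(0,T)$, and since $m_0 + \mathscr E_0 \le \mathscr N_0$ this constant is of the allowed form. For the upper bound, the only quantity left to control in Proposition \ref{Prop2.1} is $\int_0^t \varrho_0\vartheta\,d\tau$, at a fixed $y$. I would estimate $\varrho_0\vartheta = \sqrt{\varrho_0}\,\bigl(\sqrt{\varrho_0}\,\vartheta\bigr) \le \|\varrho_0\|_\infty^{1/2}\,\|\sqrt{\varrho_0}\,\vartheta\|_\infty$ and then use Cauchy--Schwarz in $t$ together with the bound $\int_0^T \|\sqrt{\varrho_0}\,\vartheta\|_\infty^2\,dt \le C$ of Proposition \ref{Prop2.2} to get $\int_0^t \varrho_0\vartheta\,d\tau \le C$ uniformly in $(y,t)\in(\alpha,\beta)\times(0,T)$; plugging this into Proposition \ref{Prop2.1} gives $J \le \overline C$.

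For the space-time bound on $v_y$, I would go back to the velocity energy identity (\ref{BS-1}), namely $\frac12\frac{d}{dt}\int_\alpha^\beta \varrho_0 v^2\,dy + \mu\int_\alpha^\beta \frac{|v_y|^2}{J}\,dy = \int_\alpha^\beta v_y\pi\,dy$. Since $\pi = R\frac{\varrho_0}{J}\vartheta$, Young's inequality bounds the right-hand side by $\frac{\mu}{2}\int_\alpha^\beta \frac{|v_y|^2}{J}\,dy + C\int_\alpha^\beta \frac{\varrho_0^2\vartheta^2}{J}\,dy$, and using $J \ge \underline C$ and $\varrho_0^2\vartheta^2 \le \varrho_0\,\|\sqrt{\varrho_0}\,\vartheta\|_\infty^2$ the last integral is at most $C\,m_0\,\|\sqrt{\varrho_0}\,\vartheta\|_\infty^2$. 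Absorbing the first term, integrating in $t$, and using Proposition \ref{BASIC} to bound $\int_\alpha^\beta \varrho_0 v^2$ at the endpoints of $[0,T]$ together with $\int_0^T \|\sqrt{\varrho_0}\,\vartheta\|_\infty^2\,dt \le C$ from Proposition \ref{Prop2.2}, I obtain $\int_0^T\!\!\int_\alpha^\beta \frac{|v_y|^2}{J}\,dy\,dt \le C$. Finally, the upper bound $J \le \overline C$ just proved turns this into $\int_0^T \|v_y\|_2^2\,dt \le \overline C \int_0^T\!\!\int_\alpha^\beta \frac{|v_y|^2}{J}\,dy\,dt \le C$, and all constants depend only on $R, c_v, \mu, \kappa, K_1, T$ and an upper bound for $\mathscr N_0$, not on $\alpha,\beta$ with $\beta - \alpha \ge 1$.

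I do not expect a genuine obstacle here, since each ingredient is already in hand. The one point requiring a little care is the logical ordering: the upper bound for $J$ must be established \emph{before} the final step, where it is used to pass from the weighted quantity $\int \frac{|v_y|^2}{J}$ to the unweighted $\int |v_y|^2$; likewise the lower bound $J \ge \underline C$ is needed when estimating $\int \frac{\varrho_0^2\vartheta^2}{J}$. Beyond this sequencing, the corollary is essentially a consolidation of Propositions \ref{BASIC}, \ref{Prop2.1} and \ref{Prop2.2}.
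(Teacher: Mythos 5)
Your proposal is correct and follows essentially the same route as the paper: lower bound for $J$ directly from Proposition \ref{Prop2.1}, upper bound by feeding Proposition \ref{Prop2.2} (namely the integrability of $\|\sqrt{\varrho_0}\vartheta\|_\infty^2$) into Proposition \ref{Prop2.1}, and then an $L^2$ energy estimate for $v$ (testing (\ref{Eqv}) with $v$) combined with the bounds on $J$ to pass from $\int\frac{|v_y|^2}{J}$ to $\|v_y\|_2^2$. The only cosmetic difference is in how the pressure term is absorbed: the paper bounds $R\int\frac{\varrho_0}{J}\vartheta v_y\,dy$ by $\|\frac{v_y}{\sqrt J}\|_2\|\sqrt{\varrho_0}\vartheta\|_2$ and then uses the sup-in-time bound on $\|\sqrt{\varrho_0}\vartheta\|_2$, whereas you use $\|\sqrt{\varrho_0}\vartheta\|_\infty$ together with the time-integral bound from Proposition \ref{Prop2.2}; both work. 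One minor imprecision: when integrating the energy inequality you only need the initial-time bound on $\int\varrho_0 v^2\,dy$ (from Proposition \ref{BASIC} or directly from $\mathscr E_0\le\mathscr N_0$), not the value at $t=T$, which is discarded by nonnegativity; this does not affect the argument.
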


\begin{proof}
The lower bound of $J$ follows directly from Proposition \ref{Prop2.1} while the upper bound of $J$ follows from
combining Proposition \ref{Prop2.1} and Proposition \ref{Prop2.2}. Testing (\ref{Eqv}) with $v$ and integrating by parts yield
\begin{eqnarray*}
  \frac12\frac{d}{dt}\|\sqrt{\varrho_0}v\|_2^2+\mu\left\|\frac{v_y}{\sqrt J}\right\|_2^2&=&R\int_{\alpha}^{\beta}\frac{\varrho_0}{J} \vartheta
  v_y dy \\
  \leq C\left\|\frac{v_y}{\sqrt J}\right\|_2\|\sqrt{\varrho_0}\vartheta\|_2&\leq&\frac\mu 2\left\|\frac{v_y}{\sqrt J}\right\|_2^2 +C \|\sqrt{\varrho_0}\vartheta\|_2^2,
\end{eqnarray*}
where the lower bound of $J$ was used,
and thus
\begin{equation*}
  \frac{d}{dt}\|\sqrt{\varrho_0}v\|_2^2 +\mu\left\|\frac{v_y}{\sqrt J}\right\|_2^2\leq C\|\sqrt{\varrho_0}\vartheta\|_2^2.
\end{equation*}
The second conclusion follows from this, the upper bound of $J$ just proved, and Proposition \ref{Prop2.2}.
\end{proof}

In the rest of this section, we always assume that $\beta-\alpha\geq1$. We will use the following elementary inequality.

\begin{lemma}
  \label{Lem2.1}
It holds that
\begin{equation*}
  \|f\|_{L^p((\alpha, \beta))}\leq C(\|f\|_{L^2((\alpha, \beta))}+\|f\|_{L^2((\alpha, \beta))}^{\frac12+\frac1p}\|f'\|_{L^2((\alpha, \beta))}^{\frac12-\frac1p}),
  \quad p\in[2,\infty],
\end{equation*}
for any $f\in H^1((\alpha, \beta))$, and for a positive constant $C$ depending only on $p$.
\end{lemma}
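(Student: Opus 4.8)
The plan is to reduce everything to the endpoint case $p=\infty$ and then interpolate. Recall that $H^1((\alpha,\beta))\hookrightarrow C([\alpha,\beta])$, so pointwise values of $f$ make sense. First I would prove the bound for $p=\infty$: since $\beta-\alpha\geq1$, the mean value of $f^2$ over $(\alpha,\beta)$ satisfies $\frac1{\beta-\alpha}\int_\alpha^\beta f^2\,dx\leq\|f\|_{L^2((\alpha,\beta))}^2$, hence there is a point $z_0\in(\alpha,\beta)$ with $f(z_0)^2\leq\|f\|_{L^2((\alpha,\beta))}^2$. For any $y\in(\alpha,\beta)$ I would write $f(y)^2=f(z_0)^2+\int_{z_0}^y(f^2)'\,dx$ and bound the integral by $2\int_\alpha^\beta|f||f'|\,dx\leq2\|f\|_{L^2((\alpha,\beta))}\|f'\|_{L^2((\alpha,\beta))}$ via Cauchy--Schwarz. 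Taking the supremum over $y$ and then a square root, together with $\sqrt{a+b}\leq\sqrt a+\sqrt b$, gives
$$
\|f\|_{L^\infty((\alpha,\beta))}\leq\|f\|_{L^2((\alpha,\beta))}+\sqrt2\,\|f\|_{L^2((\alpha,\beta))}^{1/2}\|f'\|_{L^2((\alpha,\beta))}^{1/2},
$$
which is exactly the claimed inequality for $p=\infty$, with a constant independent of $\alpha,\beta$.

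Next, for $p\in[2,\infty)$ I would use the elementary interpolation of Lebesgue norms on the finite interval, $\|f\|_{L^p((\alpha,\beta))}\leq\|f\|_{L^2((\alpha,\beta))}^{2/p}\|f\|_{L^\infty((\alpha,\beta))}^{1-2/p}$, and substitute the endpoint bound just obtained. Then I would expand the power $1-2/p\in[0,1)$ using the subadditivity $(a+b)^\theta\leq a^\theta+b^\theta$ valid for $\theta\in[0,1]$ and nonnegative $a,b$. This produces two terms: one equal to $\|f\|_{L^2((\alpha,\beta))}^{2/p}\cdot\|f\|_{L^2((\alpha,\beta))}^{1-2/p}=\|f\|_{L^2((\alpha,\beta))}$, and one equal to $2^{(1-2/p)/2}\|f\|_{L^2((\alpha,\beta))}^{2/p+(1/2-1/p)}\|f'\|_{L^2((\alpha,\beta))}^{(1-2/p)/2}$. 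A direct check of exponents shows $2/p+(1/2-1/p)=1/2+1/p$ and $(1-2/p)/2=1/2-1/p$, so the second term is $2^{1/2-1/p}\|f\|_{L^2((\alpha,\beta))}^{1/2+1/p}\|f'\|_{L^2((\alpha,\beta))}^{1/2-1/p}$. Hence the inequality holds with $C=\max\{1,2^{1/2-1/p}\}\leq\sqrt2$, depending only on $p$.

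Since the whole argument is elementary, there is no genuine obstacle; the one point that requires attention is that the constant must \emph{not} depend on $\beta-\alpha$, and this is secured precisely by the standing hypothesis $\beta-\alpha\geq1$, which enters only through the estimate $\frac1{\beta-\alpha}\int_\alpha^\beta f^2\,dx\leq\|f\|_{L^2((\alpha,\beta))}^2$ used to locate the point $z_0$. (If one did not have $\beta-\alpha\geq1$, the mean-value bound would carry a factor $\frac1{\beta-\alpha}$ and the constant would blow up as the interval shrinks; for our application $\beta-\alpha\geq1$ is exactly the relevant regime.)
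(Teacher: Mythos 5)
Your proof is correct, and it takes a genuinely more elementary route than what the paper sketches. The paper proposes to rescale $(\alpha,\beta)$ onto the unit interval $(0,1)$, invoke the Gagliardo--Nirenberg inequality there as a known result, and then observe that the length factors arising from scaling back are controlled by $\beta-\alpha\geq1$ (since $p\geq2$ makes the relevant power of $\beta-\alpha$ nonpositive). You instead prove the endpoint case $p=\infty$ from scratch via the fundamental theorem of calculus and Cauchy--Schwarz, using $\beta-\alpha\geq1$ only to locate a point $z_0$ with $f(z_0)^2\leq\|f\|_{L^2}^2$, and then obtain the finite-$p$ case by the elementary $L^2$--$L^\infty$ interpolation $\|f\|_{L^p}\leq\|f\|_{L^2}^{2/p}\|f\|_{L^\infty}^{1-2/p}$ followed by the subadditivity $(a+b)^\theta\leq a^\theta+b^\theta$. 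This has the advantage of being self-contained and producing the explicit, $p$-independent constant $C=\sqrt2$, whereas the paper's scaling argument treats the unit-interval GN inequality as a black box. Both arguments use the hypothesis $\beta-\alpha\geq1$ in exactly the way one must: the paper to suppress the residual length factor $(\beta-\alpha)^{1/p-1/2}\leq1$, you to ensure the mean of $f^2$ is bounded by $\|f\|_{L^2}^2$. No gaps.
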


\begin{proof}
  This can be proved by scaling the corresponding inequality in $(\alpha,\beta)$ to that
  in $(0,1)$, applying the
  Gagliardo-Nirenberg inequality for functions in $H^1((0,1))$, and using the condition $\beta-\alpha\geq1$. Since the proof is straightforward, and thus is omitted here.
\end{proof}

Let $G$ be the effective viscous flux, i.e.,
\begin{equation*}
  G:=\mu\frac{v_y}{J}-\pi=\mu\frac{v_y}{J}-R\frac{\varrho_0\vartheta}{J}.\label{G}
\end{equation*}
Then, it holds that
\begin{equation}
  \label{2.8}
  G_t-\frac\mu J\left(\frac{G_y}{\varrho_0}\right)_y=-\frac{\kappa(\gamma-1)}{J}\left(\frac{\vartheta_y}{J}\right)_y-\gamma\frac{v_y}{J} G
\end{equation}
and
\begin{equation}
  G|_{y=\alpha, \beta}=0. \label{2.9}
\end{equation}

\begin{proposition}
  \label{Prop2.3}
  It holds that
  $$
  \sup_{0\leq t\leq T}\|G\|_2^2+\int_0^T\left(\left\|\frac{G_y}{\sqrt{\varrho_0}}\right\|_2^2+\|G\|_\infty^4\right)dt\leq
  C(1+\|G_0\|_2^2).
  $$
\end{proposition}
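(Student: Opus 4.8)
The plan is to perform a standard energy estimate on the parabolic equation \eqref{2.8} for the effective viscous flux $G$, using the boundary condition \eqref{2.9} to kill boundary terms and the previously established bounds (Propositions \ref{BASIC}--\ref{Prop2.2} and Corollary \ref{Cor2.1}) to control the right-hand side. First I would multiply \eqref{2.8} by $G$ and integrate over $(\alpha,\beta)$. Integration by parts in the second term, using $G|_{y=\alpha,\beta}=0$, produces the good term $\mu\int \frac{|G_y|^2}{J\varrho_0}\,dy$, which is comparable to $\|G_y/\sqrt{\varrho_0}\|_2^2$ by the two-sided bound on $J$ from Corollary \ref{Cor2.1} (note $\frac{1}{J}$ is also bounded since $\varrho_0\leq\|\varrho_0\|_\infty$; more precisely one should be careful that it is $\frac1J(\frac{G_y}{\varrho_0})_y$ so one integrates by parts moving $\frac1J$ onto $G$ as well, generating a harmless term $\int G_y G (\frac1J)_y \frac{1}{\varrho_0}$-type contribution controlled via $J_t=v_y$ — actually cleaner: multiply by $G$ and integrate; the term $\int \frac{\mu}{J}(\frac{G_y}{\varrho_0})_y G\,dy = -\int \frac{\mu}{J}\frac{|G_y|^2}{\varrho_0}dy - \int \mu G_y G \partial_y(\tfrac1J)\tfrac1{\varrho_0}dy$, and $\partial_y(\frac1J) = -\frac{J_y}{J^2}$ which at this stage is not yet controlled, so instead one should keep $\frac1J$ inside and test against $\frac{G}{J}$... ). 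Let me restate: the cleanest route is to test \eqref{2.8} with $G$ and handle the principal term by writing $\int \frac{\mu}{J}\left(\frac{G_y}{\varrho_0}\right)_y G\,dy$ and integrating by parts with the convention that boundary terms vanish, absorbing any term with $J_y$ that is not yet available by instead noting the identity can be organized so the only second-order operator acts cleanly; alternatively one multiplies the equation by $JG$. Either way the outcome is: $\frac{d}{dt}\|\sqrt{?}G\|_2^2$ plus a coercive term $c\|G_y/\sqrt{\varrho_0}\|_2^2$ on the left.

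For the right-hand side of \eqref{2.8}: the term $-\gamma\frac{v_y}{J}G$ tested against $G$ gives $\int -\gamma\frac{v_y}{J}G^2\,dy$, which I would bound by $C\|v_y\|_2\|G\|_\infty\|G\|_2 \leq C\|v_y\|_2\|G\|_2^{3/2}(\|G\|_2^{1/2}+\|G_y\|_2^{1/2})$ via Lemma \ref{Lem2.1}, then split using Young's inequality: the piece with $\|G_y\|_2^{1/2}$ needs to be converted to $\|G_y/\sqrt{\varrho_0}\|_2$, which costs a factor $\|\varrho_0\|_\infty^{1/4}$, absorbable into the coercive term after Young, leaving $C\|v_y\|_2^{2}\|G\|_2^2 + C\|v_y\|_2^{4/3}\|G\|_2^2$ or similar — note $\int_0^T\|v_y\|_2^2\,dt\leq C$ by Corollary \ref{Cor2.1}, so Grönwall applies. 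The term $-\frac{\kappa(\gamma-1)}{J}\left(\frac{\vartheta_y}{J}\right)_y$ tested against $G$ is the delicate one: I would integrate by parts to move the $y$-derivative off $\vartheta$, giving $\int \frac{\vartheta_y}{J}\partial_y\!\left(\frac{G}{J}\right)\kappa(\gamma-1)\,dy$ (boundary terms: $\vartheta_y/J^2 \cdot G$ vanishes since $G$ vanishes at $\alpha,\beta$), which expands into $\int \frac{\vartheta_y}{J^2}G_y\,dy$ plus $\int \frac{\vartheta_y}{J}G\,\partial_y(\tfrac1J)dy$; the first is handled by Cauchy-Schwarz against $\|\vartheta_y/\sqrt J\|_2$ (bounded in $L^2_tL^2_x$ by Proposition \ref{Prop2.2}) and $\|G_y\|_2 \leq \|\varrho_0\|_\infty^{1/2}\|G_y/\sqrt{\varrho_0}\|_2$, absorbed via Young into the coercive term; the second term, involving $J_y$, is again not yet controlled, so the right approach is to avoid it by testing the whole equation \eqref{2.8} multiplied through by $J$ against $G$ from the start, so that the $\vartheta$-term reads $-\kappa(\gamma-1)(\frac{\vartheta_y}{J})_y$ and integration by parts against $G$ gives only $\int\frac{\vartheta_y}{J}G_y\,dy$ with no stray $J_y$.

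Assembling these, one obtains a differential inequality of the form $\frac{d}{dt}\|\sqrt{\varrho_0}^{\,?}G\|_2^2 + c\|G_y/\sqrt{\varrho_0}\|_2^2 \leq C(1+\|v_y\|_2^2)\|G\|_2^2 + C\|\vartheta_y/\sqrt J\|_2^2$, and since $\int_0^T(\|v_y\|_2^2+\|\vartheta_y/\sqrt J\|_2^2)\,dt\leq C$ by Corollary \ref{Cor2.1} and Proposition \ref{Prop2.2}, Grönwall's inequality yields $\sup_{[0,T]}\|G\|_2^2 + \int_0^T\|G_y/\sqrt{\varrho_0}\|_2^2\,dt \leq C(1+\|G_0\|_2^2)$. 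Finally the $L^\infty$ bound follows from Lemma \ref{Lem2.1}: $\|G\|_\infty^4 \leq C(\|G\|_2^4 + \|G\|_2^{3}\|G'\|_2) \leq C\|G\|_2^4 + C\|G\|_2^3\|\varrho_0\|_\infty^{1/2}\|G_y/\sqrt{\varrho_0}\|_2$, and integrating in time and using the already-established bounds on $\sup\|G\|_2$ and $\int_0^T\|G_y/\sqrt{\varrho_0}\|_2^2\,dt$ closes the estimate. The main obstacle I anticipate is bookkeeping: ensuring that no term requiring control of $J_y$ (which is only estimated later) sneaks in — this is why testing against $JG$ rather than $G$, and being careful with the weight $\varrho_0$ in the coercive term versus the weight-free norms coming out of Lemma \ref{Lem2.1}, is essential.
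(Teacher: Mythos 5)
Your proposal is correct and, once the exploratory detours are pruned, lands on exactly the paper's argument: multiply \eqref{2.8} by $JG$, use $J_t=v_y$ to absorb the commutator into the $v_yG^2$ term, integrate by parts with $G|_{y=\alpha,\beta}=0$ to get $\mu\|G_y/\sqrt{\varrho_0}\|_2^2$ as the coercive term and $\kappa(\gamma-1)\int\frac{\vartheta_y}{J}G_y\,dy$ with no stray $J_y$, then close with Lemma~\ref{Lem2.1}, Young, Gr\"onwall, and the already-proved bounds on $\int_0^T\|v_y\|_2^2\,dt$ and $\int_0^T\|\vartheta_y/\sqrt J\|_2^2\,dt$. (One small slip: from Lemma~\ref{Lem2.1} with $p=\infty$ the correct fourth-power interpolation is $\|G\|_\infty^4\lesssim\|G\|_2^4+\|G\|_2^2\|G_y\|_2^2$, not $\|G\|_2^3\|G_y\|_2$, but this does not affect the conclusion.)
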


\begin{proof}
Testing (\ref{2.8}) with $JG$, using (\ref{EqJ}), (\ref{2.9}), Lemma \ref{Lem2.1},
Corollary \ref{Cor2.1}, and the Young inequality, one obtains
\begin{eqnarray*}
  &&\frac12\frac{d}{dt}\|\sqrt JG\|_2^2+\mu\left\|\frac{G_y}{\sqrt{\varrho_0}}\right\|_2^2\\
  &=&\kappa(\gamma-1)\int_{\alpha}^{\beta}\frac{\vartheta_y G_y}{J}dy+\left(\frac12-\gamma\right)
  \int_{\alpha}^{\beta} v_y G^2 dy\\
  &\leq&C\left(\|\vartheta_y\|_2\left\|\frac{G_y}{\sqrt{\varrho_0}}\right\|_2+\|v_y\|_2\|G\|_4^2\right)\\
  &\leq&C\left[\|\vartheta_y\|_2\left\|\frac{G_y}{\sqrt{\varrho_0}}\right\|_2+\|v_y\|_2\left(\|G\|_2^2+\|G\|_2^{\frac32}
  \|G_y\|_2^{\frac12}\right)\right]\\
  &\leq&\frac\mu2\left\|\frac{G_y}{\sqrt{\varrho_0}}\right\|_2^2+C[\|\vartheta_y\|_2^2+(1+\|v_y\|_2^2)\|G\|_2^2],
\end{eqnarray*}
that is,
$$
\frac{d}{dt}\|\sqrt JG\|_2^2+\mu\left\|\frac{G_y}{\sqrt{\varrho_0}}\right\|_2^2
\leq C[\|\vartheta_y\|_2^2+(1+\|v_y\|_2^2)\|G\|_2^2].
$$
Thanks to this and the Gr\"onwall inequality, the desired conclusion, except the estimate on
$\int_0^T\|G\|_\infty^4dt$, follows from  Proposition \ref{Prop2.2} and Corollary \ref{Cor2.1}. While the estimate for $\int_0^T\|G\|_\infty^4dt$ follows from Corollary \ref{Cor2.1}, Lemma \ref{Lem2.1}, and the estimate just proved.
\end{proof}

\begin{proposition}
  \label{Prop2.4}
  It holds that
  $$
  \sup_{0\leq t\leq T}\left(\left\|\frac{J_y}{\sqrt{\varrho_0}}\right\|_2^2+\|v_y\|_2^2+\|J_t\|_2^2\right)
  +\int_0^T\left(\|\sqrt{\varrho_0}v_t\|_2^2+\left\|\frac{v_{yy}}{\sqrt{\varrho_0}}\right\|_2^2\right)dt\leq C.
  $$
\end{proposition}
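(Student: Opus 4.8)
The plan is to reduce everything to two algebraic identities linking $v_t$, $v_y$, $v_{yy}$ to the effective viscous flux $G$, and then to close a single Gr\"onwall estimate for $\|J_y/\sqrt{\varrho_0}\|_2$. From (\ref{Eqv}) together with $G=\mu\frac{v_y}{J}-\pi$ one reads off $\varrho_0 v_t=G_y$, so that $\|\sqrt{\varrho_0}v_t\|_2=\|G_y/\sqrt{\varrho_0}\|_2$ and the bound $\int_0^T\|\sqrt{\varrho_0}v_t\|_2^2\,dt\le C$ is immediate from Proposition \ref{Prop2.3}. Likewise $\mu v_y=JG+R\varrho_0\vartheta$, and differentiating in $y$,
$$
\mu v_{yy}=J_yG+JG_y+R\varrho_0'\vartheta+R\varrho_0\vartheta_y .
$$
In view of Corollary \ref{Cor2.1}, hypothesis (H1) (which gives $|\varrho_0'|\le K_1\varrho_0$), and Propositions \ref{Prop2.2}--\ref{Prop2.3} (note $\|\sqrt{\varrho_0}\vartheta_y\|_2\le\|\varrho_0\|_\infty^{1/2}\|\vartheta_y\|_2$ is in $L^2(0,T)$ by Proposition \ref{Prop2.2} and the upper bound of $J$, while $\|\varrho_0'\vartheta/\sqrt{\varrho_0}\|_2\le K_1\|\sqrt{\varrho_0}\vartheta\|_2\le C$ and $\|JG_y/\sqrt{\varrho_0}\|_2\le\overline C\|G_y/\sqrt{\varrho_0}\|_2\in L^2(0,T)$), this identity shows that $\int_0^T\|v_{yy}/\sqrt{\varrho_0}\|_2^2\,dt\le C$ will follow once we control $\|J_y/\sqrt{\varrho_0}\|_2$, since the one remaining term is $\|J_yG/\sqrt{\varrho_0}\|_2\le\|G\|_\infty\|J_y/\sqrt{\varrho_0}\|_2$ with $\|G\|_\infty\in L^4(0,T)$.

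The core of the argument is thus $\sup_{0\le t\le T}\|J_y/\sqrt{\varrho_0}\|_2^2\le C$. Since $J_t=v_y$ gives $J_{yt}=v_{yy}$, and $\varrho_0=\varrho_0(y)$ is independent of $t$, multiplying $J_{yt}=v_{yy}$ by $J_y/\varrho_0$ and integrating over $(\alpha,\beta)$ (no integration by parts, hence no boundary terms) yields
$$
\frac12\frac{d}{dt}\left\|\frac{J_y}{\sqrt{\varrho_0}}\right\|_2^2=\frac1\mu\int_\alpha^\beta\left(\frac{J_y^2G}{\varrho_0}+\frac{J_yJG_y}{\varrho_0}+\frac{RJ_y\varrho_0'\vartheta}{\varrho_0}+RJ_y\vartheta_y\right)dy .
$$
The delicate term is the third one, where the factor $1/\varrho_0$ threatens to be singular; but by (H1), $|\varrho_0'/\varrho_0|\le K_1$, so $\left|\frac{J_y\varrho_0'\vartheta}{\varrho_0}\right|\le K_1\frac{|J_y|}{\sqrt{\varrho_0}}\sqrt{\varrho_0}\vartheta$ and Cauchy--Schwarz bounds it by $C\|J_y/\sqrt{\varrho_0}\|_2\|\sqrt{\varrho_0}\vartheta\|_2$. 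Estimating the other three terms with Corollary \ref{Cor2.1} and Cauchy--Schwarz, one arrives at
$$
\frac{d}{dt}\left\|\frac{J_y}{\sqrt{\varrho_0}}\right\|_2^2\le C\bigl(1+\|G\|_\infty\bigr)\left\|\frac{J_y}{\sqrt{\varrho_0}}\right\|_2^2+C\left(\left\|\frac{G_y}{\sqrt{\varrho_0}}\right\|_2^2+\|\sqrt{\varrho_0}\vartheta\|_2^2+\|\sqrt{\varrho_0}\vartheta_y\|_2^2\right).
$$
By Proposition \ref{Prop2.3}, $\|G\|_\infty\in L^4(0,T)\subset L^1(0,T)$, so the coefficient $C(1+\|G\|_\infty)$ is integrable in time; the forcing term is in $L^1(0,T)$ by Propositions \ref{Prop2.2}--\ref{Prop2.3}; and $J_y|_{t=0}=0$ because $J|_{t=0}\equiv1$. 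Gr\"onwall's inequality then gives $\sup_{0\le t\le T}\|J_y/\sqrt{\varrho_0}\|_2^2\le C$.

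Finally, $\mu v_y=JG+R\varrho_0\vartheta$ together with $\sup_{0\le t\le T}\|G\|_2^2\le C$ (Proposition \ref{Prop2.3}, using $\|G_0\|_2\le\mathscr N_0$) and $\sup_{0\le t\le T}\|\sqrt{\varrho_0}\vartheta\|_2^2\le C$ (Proposition \ref{Prop2.2}) gives $\sup_{0\le t\le T}\|v_y\|_2^2\le C$, whence $\sup_{0\le t\le T}\|J_t\|_2^2=\sup_{0\le t\le T}\|v_y\|_2^2\le C$; and the decomposition of $v_{yy}$ above, now with all four terms under control in $L^2(0,T)$, yields $\int_0^T\|v_{yy}/\sqrt{\varrho_0}\|_2^2\,dt\le C$. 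The only genuinely delicate step is the handling of the potentially singular weight in the $\varrho_0'\vartheta/\varrho_0$ term, which is precisely where assumption (H1) enters; the rest is bookkeeping with the bounds of Propositions \ref{Prop2.1}--\ref{Prop2.3} and Corollary \ref{Cor2.1}.
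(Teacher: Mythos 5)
Your argument reproduces the paper's proof step for step: the identities $\varrho_0 v_t=G_y$ and $\mu v_y=JG+R\varrho_0\vartheta$, the differentiated identity $\mu J_{yt}=J_yG+JG_y+R\varrho_0'\vartheta+R\varrho_0\vartheta_y$, the energy estimate for $\|J_y/\sqrt{\varrho_0}\|_2^2$ obtained by pairing with $J_y/\varrho_0$, the use of (H1) to tame $\varrho_0'/\varrho_0$, and the final Gr\"onwall argument with $\|G\|_\infty\in L^1(0,T)$ in the coefficient. The bookkeeping and the ordering of the claims differ only cosmetically from the paper's Proposition \ref{Prop2.4}, and the proof is correct.
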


\begin{proof}
Note that $v_y=\frac1\mu(JG+R\varrho_0\vartheta)$ and $\sqrt{\varrho_0}v_t=\frac{G_y}{\sqrt{\varrho_0}}$. It follows from Proposition \ref{Prop2.2}, Proposition \ref{Prop2.3}, and Corollary \ref{Cor2.1} that
$$
\sup_{0\leq t\leq T}\|v_y\|_2^2+\int_0^T\|\sqrt{\varrho_0}v_t\|_2^2dt\leq C,
$$
which by (\ref{EqJ}) implies
$$
\sup_{0\leq t\leq T}\|J_t\|_2^2\leq C.
$$
Direct calculations yield
\begin{eqnarray*}
  J_{yt}=\frac1\mu(JG_y+J_yG+R\varrho_0'\vartheta+R\varrho_0\vartheta_y).
\end{eqnarray*}
Taking the inner product of the above with $\frac{J_y}{\varrho_0}$, one obtains from Proposition \ref{Prop2.2}, Corollary \ref{Cor2.1}, and (H1) that
\begin{eqnarray*}
  \frac12\frac{d}{dt}\left\|\frac{J_y}{\sqrt{\varrho_0}}\right\|_2^2&\leq&C\int_{\alpha}^{\beta}\left(|J|\left|\frac{G_y}{\sqrt{\varrho_0}}
  \right|+\left|\frac{J_y}{\sqrt{\varrho_0}}\right||G|+ \sqrt{\varrho_0}\vartheta+ \sqrt{\varrho_0}|\vartheta_y|\right)\frac{|J_y|}{\sqrt{\varrho_0}}
  dy \\
  &\leq&C\left(\left\|\frac{G_y}{\sqrt{\varrho_0}}\right\|_2+\|G\|_\infty\left\|\frac{J_y}{\sqrt{\varrho_0}}\right\|_2
  +\|\sqrt{\varrho_0}\vartheta\|_2+\|\vartheta_y\|_2\right)\left\|\frac{J_y}{\sqrt{\varrho_0}}\right\|_2 \\
  &\leq&C(1+\|G\|_\infty)\left\|\frac{J_y}{\sqrt{\varrho_0}}\right\|_2^2
  +C\left(1+\|\vartheta_y\|_2^2+\left\|\frac{G_y}{\sqrt{\varrho_0}}\right\|_2^2\right),
\end{eqnarray*}
which, together with the Gr\"onwall inequality, Proposition \ref{Prop2.2}, Corollary \ref{Cor2.1},
and Proposition \ref{Prop2.3}, yields
\begin{equation}
  \label{2.10}
  \sup_{0\leq t\leq T}\left\|\frac{J_y}{\sqrt{\varrho_0}}\right\|_2^2\leq C.
\end{equation}
Since
\begin{equation}
\label{VY-1}
v_{yy}=\frac1\mu(J_yG+JG_y+R\varrho_0'\vartheta+R\varrho_0\vartheta_y),
\end{equation}
it follows from (\ref{2.10}), Corollary \ref{Cor2.1}, Propositions \ref{Prop2.2}, Proposition \ref{Prop2.3}, and (H1) that
$$
\int_0^T\left\|\frac{v_{yy}}{\sqrt{\varrho_0}}\right\|_2^2dt\leq C\int_0^T\left(\left\|\frac{J_y}{\sqrt{\varrho_0}}\right\|_2^2
\|G\|_\infty^2+\left\|\left(\frac{G_y}{\sqrt{\varrho_0}},\sqrt{\varrho_0}\vartheta,\vartheta_y\right)\right\|_2^2\right)dt\leq C.
$$
This completes the proof.
\end{proof}

\begin{proposition}
  \label{Prop2.5}
  It holds that
  $$
  \sup_{0\leq t\leq T}\|\sqrt{\varrho_0}\vartheta_y\|_2^2+\int_0^T(\|\varrho_0\vartheta_t\|_2^2+\|\vartheta_{yy}\|_2^2)
  dt\leq C.
  $$
\end{proposition}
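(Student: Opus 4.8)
The plan is to derive a weighted energy estimate for $\vartheta_t$ by testing the temperature equation against $\varrho_0\vartheta_t$, and then to recover the $L^2$ bound on $\vartheta_{yy}$ directly from the equation. Using the definition $G=\mu\frac{v_y}{J}-R\frac{\varrho_0\vartheta}{J}$ and the identity $\mu\frac{|v_y|^2}{J}-v_y\pi=v_yG$, equation (\ref{Eqtheta}) takes the clean form
\[
c_v\varrho_0\vartheta_t-\kappa\Bigl(\frac{\vartheta_y}{J}\Bigr)_y=v_yG .
\]
I would test this against $\varrho_0\vartheta_t$ on $(\alpha,\beta)$ and integrate by parts. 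Since $\vartheta|_{y=\alpha,\beta}=0$ for all $t$, also $\vartheta_t|_{y=\alpha,\beta}=0$, so no boundary terms survive; and writing $\int\frac{\varrho_0\vartheta_y\vartheta_{yt}}{J}\,dy=\frac12\frac{d}{dt}\bigl\|\sqrt{\varrho_0/J}\,\vartheta_y\bigr\|_2^2+\frac12\int\frac{\varrho_0 v_y\vartheta_y^2}{J^2}\,dy$ (using $J_t=v_y$), one obtains
\[
c_v\|\varrho_0\vartheta_t\|_2^2+\frac\kappa2\frac{d}{dt}\Bigl\|\sqrt{\tfrac{\varrho_0}{J}}\,\vartheta_y\Bigr\|_2^2
=-\frac\kappa2\int_\alpha^\beta\frac{\varrho_0 v_y\vartheta_y^2}{J^2}\,dy-\kappa\int_\alpha^\beta\frac{\varrho_0'\vartheta_y\vartheta_t}{J}\,dy+\int_\alpha^\beta v_yG\,\varrho_0\vartheta_t\,dy .
\]

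Next I would close this estimate using $\underline C\le J\le\overline C$ (Corollary \ref{Cor2.1}). The first right-hand term is $\le C\|v_y\|_\infty\bigl\|\sqrt{\varrho_0/J}\,\vartheta_y\bigr\|_2^2$, where $\|v_y\|_\infty\in L^1(0,T)$ follows from Lemma \ref{Lem2.1} with $p=\infty$ together with Proposition \ref{Prop2.4} (which gives $v_y\in L^\infty(0,T;L^2)$ and $v_{yy}/\sqrt{\varrho_0}\in L^2(0,T;L^2)$, hence $v_{yy}\in L^2(0,T;L^2)$); this plays the role of the Grönwall coefficient. By (H1) the second term is $\le C\|\vartheta_y\|_2\|\varrho_0\vartheta_t\|_2\le\epsilon\|\varrho_0\vartheta_t\|_2^2+C_\epsilon\|\vartheta_y\|_2^2$, with $\|\vartheta_y\|_2^2\in L^1(0,T)$ by Proposition \ref{Prop2.2}. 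The third term is $\le\|v_yG\|_2\|\varrho_0\vartheta_t\|_2\le\epsilon\|\varrho_0\vartheta_t\|_2^2+C_\epsilon\|v_y\|_2^2\|G\|_\infty^2$, with $\|v_y\|_2^2\|G\|_\infty^2\in L^1(0,T)$ by Propositions \ref{Prop2.3}--\ref{Prop2.4}. Absorbing the $\epsilon$-terms and applying Grönwall (the initial value $\bigl\|\sqrt{\varrho_0/J}\,\vartheta_y\bigr\|_2^2\big|_{t=0}=\|\sqrt{\varrho_0}\vartheta_0'\|_2^2$ being controlled by $\mathscr N_0$) gives
\[
\sup_{0\le t\le T}\Bigl\|\sqrt{\tfrac{\varrho_0}{J}}\,\vartheta_y\Bigr\|_2^2+\int_0^T\|\varrho_0\vartheta_t\|_2^2\,dt\le C ,
\]
hence $\sup_{0\le t\le T}\|\sqrt{\varrho_0}\vartheta_y\|_2^2\le C$ since $J$ is bounded above and below.

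Finally, expanding $(\vartheta_y/J)_y=\vartheta_{yy}/J-J_y\vartheta_y/J^2$ in the equation gives $\vartheta_{yy}=\frac{J}{\kappa}\bigl(c_v\varrho_0\vartheta_t-v_yG\bigr)+\frac{J_y\vartheta_y}{J}$, so $\|\vartheta_{yy}\|_2\le C\bigl(\|\varrho_0\vartheta_t\|_2+\|v_yG\|_2+\|J_y\vartheta_y\|_2\bigr)$. The first two are square-integrable in time by the previous step and Propositions \ref{Prop2.3}--\ref{Prop2.4}. For the last, $\|J_y\vartheta_y\|_2^2\le\|J_y\|_2^2\|\vartheta_y\|_\infty^2$ with $\|J_y\|_2^2\le\|\varrho_0\|_\infty\|J_y/\sqrt{\varrho_0}\|_2^2\le C$ by Proposition \ref{Prop2.4}, while Lemma \ref{Lem2.1} gives $\|\vartheta_y\|_\infty^2\le C\bigl(\|\vartheta_y\|_2^2+\|\vartheta_y\|_2\|\vartheta_{yy}\|_2\bigr)$; substituting this, integrating over $(0,T)$, using $\int_0^T\|\vartheta_y\|_2^2\,dt\le C$ (Proposition \ref{Prop2.2}), and absorbing a small multiple of $\int_0^T\|\vartheta_{yy}\|_2^2\,dt$ into the left side yields $\int_0^T\|\vartheta_{yy}\|_2^2\,dt\le C$, completing the proof. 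The two points requiring genuine care are the choice of the \emph{degenerate} multiplier $\varrho_0\vartheta_t$ rather than $\vartheta_t$ — which is exactly what makes the source $v_yG$ pair admissibly against it, testing with $\vartheta_t$ would instead require control of $G^2/\sqrt{\varrho_0}$, unavailable near the far-field vacuum — and the seemingly circular $\|J_y\vartheta_y\|_2$ contribution in the last step, which is resolved by Gagliardo--Nirenberg and absorption.
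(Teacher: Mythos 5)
Your proof is correct and follows essentially the same route as the paper: test $c_v\varrho_0\vartheta_t-\kappa(\vartheta_y/J)_y=v_yG$ against the degenerate multiplier $\varrho_0\vartheta_t$, extract $\frac{d}{dt}\|\sqrt{\varrho_0/J}\,\vartheta_y\|_2^2$ via $J_t=v_y$, close by Gr\"onwall, then recover $\int_0^T\|\vartheta_{yy}\|_2^2\,dt$ from the equation with the Gagliardo--Nirenberg absorption of $\|\vartheta_y\|_\infty$. The only cosmetic difference is in the Gr\"onwall coefficient: the paper bounds $\|v_y\|_\infty$ by $C(\|G\|_\infty+\|\varrho_0\vartheta\|_\infty)$ using $v_y=\frac1\mu(JG+R\varrho_0\vartheta)$ together with Propositions \ref{Prop2.2} and \ref{Prop2.3}, whereas you invoke Lemma \ref{Lem2.1} and the $v_{yy}$ estimate from Proposition \ref{Prop2.4}; both yield integrability in time.
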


\begin{proof}
Rewrite (\ref{Eqtheta}) as
\begin{equation}
\label{2.11-1}
c_v\varrho_0\vartheta_t-\kappa\left(\frac{\vartheta_y}{J}\right)_y=v_yG.
\end{equation}
Note that $\vartheta_t|_{y=\alpha,\beta}=0$.
Taking the inner product of the above equation with $\varrho_0\vartheta_t$ yields
\begin{equation}
  \kappa\int_{\alpha}^{\beta}\frac{\vartheta_y}{J}(\varrho_0\vartheta_{yt}+\varrho_0'\vartheta_t)dy +c_v\|\varrho_0\vartheta_t\|_2^2 =
  \int_{\alpha}^{\beta}v_yG\varrho_0\vartheta_tdy. \label{2.11}
\end{equation}
It follows from (\ref{EqJ}) that
$$
\int_{\alpha}^{\beta}\frac{\vartheta_y}{J}\varrho_0\vartheta_{yt}dy=\frac12\frac{d}{dt}\left\|\sqrt{\frac{\varrho_0}{J}}\vartheta_y\right\|_2^2
+\frac12\int_{\alpha}^{\beta}\frac{v_y}{J^2}\varrho_0|\vartheta_y|^2dy.
$$
Substituting this into (\ref{2.11}) and using (H1) and Corollary \ref{Cor2.1}, one gets
\begin{eqnarray*}
  &&\frac\kappa2\frac{d}{dt}\left\|\sqrt{\frac{\varrho_0}{J}}\vartheta_y\right\|_2^2+c_v\|\varrho_0\vartheta_t\|_2^2\\
  &=&\int_{\alpha}^{\beta}\left(v_yG\varrho_0\vartheta_t-\frac\kappa2\frac{v_y}{J^2}\varrho_0|\vartheta_y|^2-\kappa\frac{\vartheta_y}{J}\varrho_0'\vartheta_t
  \right) dy\\
  &\leq&\int_{\alpha}^{\beta}\left(|v_y||G|\varrho_0|\vartheta_t|+\frac\kappa 2\frac{|v_y|}{J^2}\varrho_0|\vartheta_y|^2+\kappa K_1\frac{|\vartheta_y|}{J}
  \varrho_0|\vartheta_t|\right)dy\\
  &\leq&\frac{c_v}{2}\|\varrho_0\vartheta_t\|_2^2 +C(\|G\|_\infty^2\|v_y\|_2^2
  +\|v_y\|_\infty\|\sqrt{\varrho_0}\vartheta_y\|_2^2+\|\vartheta_y\|_2^2),
\end{eqnarray*}
which implies
\begin{eqnarray*}
  &&\kappa\frac{d}{dt}\left\|\sqrt{\frac{\varrho_0}{J}}\vartheta_y\right\|_2^2+c_v\|\varrho_0\vartheta_t\|_2^2\\
  &\leq&C\left[\|G\|_\infty^2\|v_y\|_2^2+(\|G\|_\infty+\|\varrho_0\vartheta\|_\infty)\|\sqrt{\varrho_0}\vartheta_y\|_2^2
  +\|\vartheta_y\|_2^2\right].
\end{eqnarray*}
It follows from this, the Gr\"onwall inequality, Propositions \ref{Prop2.2}--\ref{Prop2.4}, and Corollary \ref{Cor2.1} that
\begin{align}
  &\sup_{0\leq t\leq T}\|\sqrt{\varrho_0}\vartheta_y\|_2^2+\int_0^T\|\varrho_0\vartheta_t\|_2^2dt\nonumber\\
  \leq&~~~~ Ce^{C\int_0^T(\|G\|_\infty+\|\varrho_0\vartheta\|_\infty)dt}
  \left(\|\sqrt{\varrho_0}\vartheta_0'\|_2^2 +\int_0^T(\|G\|_\infty^2\|v_y\|_2^2+\|\vartheta_y\|_2^2)dt\right)
  \leq C. \label{2.12}
\end{align}
Direct calculations and using (\ref{2.11-1}) yield
\begin{equation*}
\kappa\vartheta_{yy}=\kappa\left(\frac{\vartheta_y}{J}\right)_yJ+\kappa\frac{\vartheta_y}{J}J_y=J(
c_v\varrho_0\vartheta_t-v_yG)+\kappa\frac{\vartheta_y}{J}J_y.
\end{equation*}
It follows from this, (\ref{2.12}), Propositions \ref{Prop2.3}--\ref{Prop2.4}, Corollary \ref{Cor2.1}, and
Lemma \ref{Lem2.1} that
\begin{eqnarray*}
  \int_0^T\|\vartheta_{yy}\|_2^2 dt&\leq&C\int_0^T\left(\|\varrho_0\vartheta_t\|_2^2+\|v_y\|_2^2\|G\|_\infty^2+\|\vartheta_y\|_\infty^2
  \|J_y\|_2^2\right)dt\\
  &\leq&C+C\int_0^T\|\vartheta_y\|_\infty^2dt\leq C+C\int_0^T\|\vartheta_y\|_2(\|\vartheta_y\|_2+\|\vartheta_{yy}\|_2)dt \\
  &\leq& \frac12\int_0^T\|\vartheta_{yy}\|_2^2dt+C,
\end{eqnarray*}
and thus $\int_0^T\|\vartheta_{yy}\|_2^2dt\leq C.$ This completes the proof.
\end{proof}

\begin{proposition}
  \label{Prop2.6}
It holds that
  $$
  \sup_{0\leq t\leq T}\left\|\frac{G_y}{\sqrt{\varrho_0}}\right\|_2^2+\int_0^T\left(\|G_t\|_2^2+\left\|\left(\frac{G_y}{\varrho_0}\right)_y
  \right\|_2^2\right)dt\leq C\left(1+\left\|\frac{G_0'}{\sqrt{\varrho_0}}\right\|_2^2\right).
  $$
\end{proposition}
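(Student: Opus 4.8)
The first observation is that equation~(\ref{Eqv}) can be rewritten as $\varrho_0 v_t=G_y$, so that $\frac{G_y}{\sqrt{\varrho_0}}=\sqrt{\varrho_0}\,v_t$ and $\bigl(\frac{G_y}{\varrho_0}\bigr)_y=v_{yt}$; hence it suffices to control $\sup_{0\le t\le T}\|\sqrt{\varrho_0}v_t\|_2^2$, $\int_0^T\|v_{yt}\|_2^2\,dt$, and then $\int_0^T\|G_t\|_2^2\,dt$. To reach the first two, the plan is to differentiate~(\ref{Eqv}) in $t$, namely $\varrho_0 v_{tt}-\mu\partial_y\bigl((v_y/J)_t\bigr)+\partial_y(\pi_t)=0$, test against $v_t$, and integrate by parts in $y$. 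All boundary terms vanish: since $v_y$ and $\vartheta$ (hence $\pi$) vanish at $y=\alpha,\beta$ for all $t$, so do $v_{yt}$, $(v_y/J)_t$ and $\pi_t$ there. Using $J_t=v_y$ and $\pi_t=R\varrho_0\vartheta_t/J-R\varrho_0\vartheta v_y/J^2$, this yields the energy identity
\[
\frac12\frac{d}{dt}\|\sqrt{\varrho_0}v_t\|_2^2+\mu\int_\alpha^\beta\frac{v_{yt}^2}{J}\,dy=\mu\int_\alpha^\beta\frac{v_y^2v_{yt}}{J^2}\,dy+R\int_\alpha^\beta\frac{\varrho_0\vartheta_t v_{yt}}{J}\,dy-R\int_\alpha^\beta\frac{\varrho_0\vartheta v_y v_{yt}}{J^2}\,dy.
\]

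Next I would bound the three terms on the right by the Cauchy--Schwarz and Young inequalities, using the two-sided bound $\underline C\le J\le\overline C$ from Corollary~\ref{Cor2.1} to absorb a small multiple of $\|v_{yt}\|_2^2$ into the dissipation. This leaves $\|v_y\|_4^4$, $\|\varrho_0\vartheta_t\|_2^2$, and $\|\varrho_0\vartheta\|_\infty^2\|v_y\|_2^2$, each of which is integrable over $[0,T]$: the first by Lemma~\ref{Lem2.1} with $p=4$ (which gives $\|v_y\|_4^4\le C\|v_y\|_2^4+C\|v_y\|_2^3\|v_{yy}\|_2$) together with $\sup_t\|v_y\|_2\le C$ and $\int_0^T(\|v_y\|_2^2+\|v_{yy}/\sqrt{\varrho_0}\|_2^2)\,dt\le C$ from Corollary~\ref{Cor2.1} and Proposition~\ref{Prop2.4}; the second by Proposition~\ref{Prop2.5}; the third by $\|\varrho_0\vartheta\|_\infty\le\|\varrho_0\|_\infty^{1/2}\|\sqrt{\varrho_0}\vartheta\|_\infty$, $\int_0^T\|\sqrt{\varrho_0}\vartheta\|_\infty^2\,dt\le C$ (Proposition~\ref{Prop2.2}) and $\sup_t\|v_y\|_2\le C$. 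Since the right-hand side carries no feedback term in $\|\sqrt{\varrho_0}v_t\|_2^2$, merely integrating the energy identity in $t$, and noting that $\varrho_0 v_t|_{t=0}=G_0'$ (from~(\ref{Eqv}) at $t=0$, since $J|_{t=0}=1$), gives $\sup_{0\le t\le T}\|\sqrt{\varrho_0}v_t\|_2^2+\int_0^T\|v_{yt}\|_2^2\,dt\le C(1+\|G_0'/\sqrt{\varrho_0}\|_2^2)$.

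Finally, to bound $\int_0^T\|G_t\|_2^2\,dt$ I would solve for $G_t$ in~(\ref{2.8}): $\|G_t\|_2\le C\bigl(\|(G_y/\varrho_0)_y\|_2+\|(\vartheta_y/J)_y\|_2+\|v_yG\|_2\bigr)$, where $\|(G_y/\varrho_0)_y\|_2=\|v_{yt}\|_2$ has just been controlled, $\|(\vartheta_y/J)_y\|_2\le C(\|\varrho_0\vartheta_t\|_2+\|v_yG\|_2)$ by~(\ref{2.11-1}), and $\|v_yG\|_2\le\|G\|_\infty\|v_y\|_2$; integrating in $t$ and invoking Propositions~\ref{Prop2.3} and~\ref{Prop2.5} completes the estimate. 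The main technical obstacle is the cubic term $\mu\int v_y^2v_{yt}/J^2$, which demands $L^4$-in-$y$ control of $v_y$ and hence the interpolation of Lemma~\ref{Lem2.1} together with all the earlier bounds; a secondary point is that the identity above formally involves $v_{tt}$, so it should be justified by difference quotients in time (or, as is standard for such a priori estimates, performed on the approximating sequence and then passed to the limit).
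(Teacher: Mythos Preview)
Your argument is correct but takes a different route from the paper. The paper combines (\ref{2.8}) with (\ref{2.11-1}) to obtain the simplified equation
\[
G_t-\frac{\mu}{J}\Bigl(\frac{G_y}{\varrho_0}\Bigr)_y=-\frac{R}{J}\varrho_0\vartheta_t-\frac{v_y}{J}G,
\]
and then tests with $JG_t$; this produces $\|\sqrt{J}\,G_t\|_2^2$ as the dissipation term and a right-hand side involving only $\|\varrho_0\vartheta_t\|_2^2+\|v_y\|_2^2\|G\|_\infty^2$, which is immediately integrable by Propositions~\ref{Prop2.3}--\ref{Prop2.5}. The bound on $\int_0^T\|(G_y/\varrho_0)_y\|_2^2\,dt$ is then read off from the equation. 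Your approach is in a sense dual: you differentiate (\ref{Eqv}) in time and test with $v_t$, which makes $\|v_{yt}/\sqrt{J}\|_2^2=\|(G_y/\varrho_0)_y/\sqrt{J}\|_2^2$ the dissipation and recovers $\int_0^T\|G_t\|_2^2\,dt$ afterwards from (\ref{2.8}). The cost of your route is the cubic term $\mu\int v_y^2 v_{yt}/J^2\,dy$, which forces you to invoke the interpolation Lemma~\ref{Lem2.1} and the $H^2$-bound on $v$ from Proposition~\ref{Prop2.4}; the paper's route sidesteps this term entirely because the nonlinear piece in the simplified $G$-equation is merely $v_yG/J$. Both arguments are valid and rely on the same prior estimates, but the paper's is shorter and avoids one layer of interpolation.
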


\begin{proof}
Combining (\ref{2.8}) with (\ref{2.11-1}) yields
\begin{eqnarray*}
  G_t-\frac\mu J\left(\frac{G_y}{\varrho_0}\right)_y
  &=&-\frac RJ\varrho_0\vartheta_t-\frac{v_y}{J}G.
\end{eqnarray*}
Note that $G_t|_{y=\alpha,\beta}=0$.
Multiplying the above with $JG_t$, integrating by parts, and using Corollary \ref{Cor2.1} yield
\begin{eqnarray*}
   \frac\mu2\frac{d}{dt}\left\|\frac{G_y}{\sqrt{\varrho_0}}\right\|_2^2+\|\sqrt JG_t\|_2^2
  =-\int_{\alpha}^{\beta}\left(R \varrho_0\vartheta_t+  v_y G\right)G_t dy \\
   \leq \frac12\|\sqrt JG_t\|_2^2+C(\|\varrho_0\vartheta_t\|_2^2+\|v_y\|_2^2\|G\|_\infty^2),
\end{eqnarray*}
from which, by Propositions \ref{Prop2.3}--\ref{Prop2.5}, the conclusion follows.
\end{proof}

\begin{proposition}
  \label{Prop2.7}
  It holds that
  $$
  \sup_{0\leq t\leq T}\left(\left\|\varrho_0^{\frac32}\vartheta_t\right\|_2^2+\|\sqrt{\varrho_0}\vartheta_{yy}\|_2^2\right)
  +\int_0^T\|\varrho_0\vartheta_{yt}\|_2^2dt\leq C.
  $$
\end{proposition}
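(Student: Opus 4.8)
The plan is to obtain the two supremum bounds by a higher-order energy estimate for the temperature equation, differentiated in time, and then to recover the spatial second-derivative bound from the equation itself. First I would differentiate (\ref{2.11-1}) in $t$ to get a parabolic-type equation for $\vartheta_t$:
\begin{equation*}
  c_v\varrho_0\vartheta_{tt}-\kappa\left(\frac{\vartheta_{yt}}{J}\right)_y
  =\kappa\left(\frac{v_y\vartheta_y}{J^2}\right)_y+(v_yG)_t
  =\kappa\left(\frac{v_y\vartheta_y}{J^2}\right)_y+v_{yt}G+v_yG_t .
\end{equation*}
Since the boundary condition $\vartheta|_{y=\alpha,\beta}=0$ is time-independent, we have $\vartheta_t|_{y=\alpha,\beta}=\vartheta_{tt}|_{y=\alpha,\beta}=0$, so testing this identity against $\varrho_0^2\vartheta_t$ and integrating by parts in $y$ will produce the good term $\frac{c_v}{2}\frac{d}{dt}\|\varrho_0^{3/2}\vartheta_t\|_2^2$ together with $\kappa\int \frac{\varrho_0^2}{J}|\vartheta_{yt}|^2\,dy$ (up to lower-order commutator terms from moving $\varrho_0^2$ across the derivatives, which are controlled using (H1) exactly as in Propositions \ref{Prop2.5}--\ref{Prop2.6}). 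The weight $\varrho_0^2$ (rather than $\varrho_0$) is dictated by the factor $\varrho_0$ already present in front of $\vartheta_t$, so that the dissipation comes out as $\|\varrho_0\vartheta_{yt}\|_2^2$, matching the claimed space-time bound.

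Next I would estimate the right-hand side terms. The term $\int v_yG_t\,\varrho_0^2\vartheta_t\,dy$ is handled by Cauchy--Schwarz and Young: bound it by $\|G_t\|_2$ times $\|v_y\|_\infty\|\varrho_0^2\vartheta_t\|_2$, absorbing $\|\varrho_0^{3/2}\vartheta_t\|_2^2$ into the Grönwall quantity and using $\int_0^T\|G_t\|_2^2\,dt\le C$ from Proposition \ref{Prop2.6}; here $\|v_y\|_\infty$ is controlled via Lemma \ref{Lem2.1} together with the bounds on $\|v_y\|_2$ and $\|v_{yy}/\sqrt{\varrho_0}\|_2$ (hence on $\|v_{yy}\|_2$ on unit-length pieces) from Proposition \ref{Prop2.4}. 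The term with $v_{yt}G$ uses $\|G\|_\infty$ (Proposition \ref{Prop2.3}) and the $L^2_tL^2$ bound on $v_{yt}$; note $v_{yt}=\frac1\mu(G_y+R\varrho_0\vartheta)_t=\frac1\mu(G_{yt}+R\varrho_0\vartheta_t)$ but it is cleaner to bound $\|v_{yt}\|_2$ directly from differentiating $v_y=\frac1\mu(JG+R\varrho_0\vartheta)$ and invoking Propositions \ref{Prop2.4}--\ref{Prop2.6}. Finally, integrating by parts the term $\kappa\int\left(\frac{v_y\vartheta_y}{J^2}\right)_y\varrho_0^2\vartheta_t\,dy$ moves the derivative onto $\varrho_0^2\vartheta_t$, producing $\int \frac{v_y\vartheta_y}{J^2}(2\varrho_0\varrho_0'\vartheta_t+\varrho_0^2\vartheta_{yt})\,dy$; the second piece is absorbed into the $\kappa\|\varrho_0\vartheta_{yt}\|_2^2$ dissipation after Young, and the first uses (H1) to replace $\varrho_0'$ by $\varrho_0$ and is then controlled by $\|v_y\|_\infty\|\vartheta_y\|_2\|\varrho_0^{3/2}\vartheta_t\|_2$. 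Grönwall's inequality, together with the initial bound $\|\varrho_0^{3/2}\vartheta_t(0)\|_2\le C\|\sqrt{\varrho_0}g_2\|_2\le C$ coming from the compatibility hypothesis in Proposition \ref{PROPGLOBAL}, then yields $\sup_{[0,T]}\|\varrho_0^{3/2}\vartheta_t\|_2^2+\int_0^T\|\varrho_0\vartheta_{yt}\|_2^2\,dt\le C$.

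With the $\vartheta_t$ estimate in hand, the bound on $\|\sqrt{\varrho_0}\vartheta_{yy}\|_2$ is purely algebraic: from the identity already derived in the proof of Proposition \ref{Prop2.5},
$$
\kappa\vartheta_{yy}=J\big(c_v\varrho_0\vartheta_t-v_yG\big)+\kappa\frac{\vartheta_y}{J}J_y,
$$
multiply by $\sqrt{\varrho_0}$ and take $L^2$; using Corollary \ref{Cor2.1} for the bounds on $J$, the just-proved $\|\varrho_0^{3/2}\vartheta_t\|_2\le C$ (so $\|\varrho_0\sqrt{\varrho_0}\,\vartheta_t\|_2=\|\varrho_0^{3/2}\vartheta_t\|_2$ controls $\|\sqrt{\varrho_0}\cdot\varrho_0\vartheta_t\|_2\le\|\varrho_0\|_\infty^{1/2}\|\varrho_0\vartheta_t\|_2$; more directly $\|\varrho_0^{1/2}\varrho_0\vartheta_t\|_2 \le \|\varrho_0\|_\infty\|\varrho_0^{1/2}\vartheta_t\|_2$ but the cleanest is to note $\sqrt{\varrho_0}\cdot\varrho_0\vartheta_t=\sqrt{\varrho_0}\cdot\varrho_0\vartheta_t$ has $L^2$-norm $\le\|\varrho_0\|_\infty\|\varrho_0^{1/2}\vartheta_t\|_2\le C$ by Proposition \ref{Prop2.5}), Proposition \ref{Prop2.3} for $\|G\|_\infty$, Proposition \ref{Prop2.4} for $\|v_y\|_2$ and $\|J_y/\sqrt{\varrho_0}\|_2$ hence $\|J_y\|_2\le\|\varrho_0\|_\infty^{1/2}\|J_y/\sqrt{\varrho_0}\|_2$, and Lemma \ref{Lem2.1} together with Proposition \ref{Prop2.5} for $\|\vartheta_y\|_\infty$, one reads off $\sup_{[0,T]}\|\sqrt{\varrho_0}\vartheta_{yy}\|_2^2\le C$. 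The main obstacle is the first step: making sure that the weight $\varrho_0^2$ in the test function, combined with (H1), really does close the differentiated-in-time energy estimate — in particular that every commutator term generated by carrying $\varrho_0^2$ through the spatial derivatives is either absorbed by the $\|\varrho_0\vartheta_{yt}\|_2^2$ dissipation or bounded by quantities already estimated in Propositions \ref{Prop2.2}--\ref{Prop2.6}; everything after that is bookkeeping.
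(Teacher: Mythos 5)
Your overall strategy is the paper's: differentiate the temperature equation (\ref{2.11-1}) in time, test against $\varrho_0^2\vartheta_t$, control the commutators with (H1), close by Gr\"onwall, and then recover the $\sqrt{\varrho_0}\vartheta_{yy}$ bound algebraically from the identity $\kappa\vartheta_{yy}=J(c_v\varrho_0\vartheta_t-v_yG)+\kappa J^{-1}\vartheta_y J_y$. (A minor slip: the time-differentiated equation should carry $-\kappa(v_y\vartheta_y/J^2)_y$ on the right, not $+\kappa(\cdot)_y$; this does not affect the estimates.) The treatment of $(v_yG)_t$ differs slightly in presentation — the paper substitutes $v_{yt}=\frac1\mu(JG_t+v_yG+R\varrho_0\vartheta_t)$ up front rather than bounding $v_{yt}$ separately — but the terms you end up controlling are the same.

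There is, however, a genuine gap in the final $\sqrt{\varrho_0}\vartheta_{yy}$ step, where you call the passage ``purely algebraic'' and propose to bound the term $\sqrt{\varrho_0}\,\vartheta_y J_y$ by $\|\vartheta_y\|_\infty\|\sqrt{\varrho_0}J_y\|_2$ using Lemma \ref{Lem2.1} and Proposition \ref{Prop2.5}. This does not close: Proposition \ref{Prop2.5} gives $\sup_t\|\sqrt{\varrho_0}\vartheta_y\|_2$ (with the weight) and only $\int_0^T\|\vartheta_{yy}\|_2^2\,dt$, so $\|\vartheta_y\|_\infty$ is controlled merely in $L^2(0,T)$, not pointwise in time — yet the claim being proved is a supremum-in-time bound. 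The paper's fix is not cosmetic: one must instead estimate the weighted quantity $\|\sqrt{\varrho_0}\vartheta_y\|_\infty$, for which Lemma \ref{Lem2.1} together with (H1) yields $\|\sqrt{\varrho_0}\vartheta_y\|_\infty^2\le C(1+\|\sqrt{\varrho_0}\vartheta_{yy}\|_2)$, i.e.\ a bound that re-introduces the very quantity one is trying to estimate, but only linearly. Plugging this back into the $\vartheta_{yy}$ identity then gives $\|\sqrt{\varrho_0}\vartheta_{yy}\|_2^2\le C(1+\|\sqrt{\varrho_0}\vartheta_{yy}\|_2)$, which is closed by Young's inequality. Without this self-absorbing step the argument stalls. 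A smaller related slip: the pointwise-in-time bound $\|G\|_\infty\le C$ you invoke needs $\sup_t\|G_y\|_2$ from Proposition \ref{Prop2.6} in addition to Proposition \ref{Prop2.3} (Proposition \ref{Prop2.3} alone only gives $\|G\|_\infty\in L^4(0,T)$); likewise the initial bound should be traced through the $\mathscr N_0$-controlled quantities $\|\sqrt{\varrho_0}\vartheta_0''\|_2$ and $\|\sqrt{\varrho_0}v_0'G_0\|_2\le C\|v_0'\|_{H^1}\|G_0\|_2$ rather than through $\|\sqrt{\varrho_0}g_2\|_2$, which is not itself dominated by $\mathscr N_0$.
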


\begin{proof}
Note that $v_y=\frac1\mu(JG+R\varrho_0\vartheta)$ and
\begin{equation}
  v_{yt}=\frac1\mu(JG_t+v_yG+R\varrho_0\vartheta_t).\label{VY-2}
\end{equation}
It follows from (\ref{EqJ}), (\ref{2.11-1}), and direct calculations that
\begin{equation*}
  c_v\varrho_0\vartheta_{tt}-\kappa\left(\frac{\vartheta_{yt}}{J}\right)_y=
  -\kappa\left(\frac{v_y\vartheta_y}{J^2}\right)_y+\frac{v_y}{\mu}G^2+\frac1\mu(2JG+R\varrho_0\vartheta)G_t+\frac R\mu\varrho_0\vartheta_t G.
\end{equation*}
Note that $\vartheta_t|_{y=\alpha,\beta}=0$.
Multiplying the above equation with $\varrho_0^2\vartheta_t$ and integrating by parts yield
\begin{eqnarray*}
  &&\frac{c_v}{2}\frac{d}{dt}\left\|\varrho_0^{\frac32}\vartheta_t\right\|_2^2+\kappa\int_{\alpha}^{\beta}\frac{\vartheta_{yt}}{J}(\varrho_0^2\vartheta_{yt}
  +2\varrho_0\varrho_0'\vartheta_t)dy\\
  &=&\frac1\mu\int_{\alpha}^{\beta}
  [v_yG^2+(2JG+R\varrho_0\vartheta)G_t+R\varrho_0\vartheta_tG]\varrho_0^2\vartheta_tdy\\
  &&+\kappa\int_{\alpha}^{\beta}\frac{v_y\vartheta_y}{J^2}(\varrho_0^2\vartheta_{yt}+2\varrho_0\varrho_0'\vartheta_t)dy.
\end{eqnarray*}
Then, by Corollary \ref{Cor2.1} and (H1), one deduces
\begin{eqnarray*}
  &&\frac{c_v}{2}\frac{d}{dt}\|\varrho_0^{\frac32}\vartheta_t\|_2^2+\kappa\left\|\frac{\varrho_0}{\sqrt J}\vartheta_{yt}\right\|_2^2 \\
  &\leq&C\int_{\alpha}^{\beta}[\varrho_0^2|\vartheta_t||\vartheta_{yt}|+|v_y||\vartheta_y|(\varrho_0^2|\vartheta_{yt}|+\varrho_0^2|\vartheta_t|)]dy \\
  &&+C\int_{\alpha}^{\beta}[|v_y|G^2+(|G|+\varrho_0\vartheta)|G_t|+\varrho_0|\vartheta_t||G|]\varrho_0^2|\vartheta_t| dy \\
  &\leq&\frac\kappa2\left\|\frac{\varrho_0}{\sqrt J}\vartheta_{yt}\right\|_2^2+C(\|\varrho_0\vartheta_t\|_2^2+\|v_y\|_\infty^2\|\varrho_0\vartheta_y\|_2^2)
  +C\|G\|_\infty^2(\|v_y\|_2^2+\|\varrho_0^2\vartheta_t\|_2^2)\\
  &&+C\|G_t\|_2^2+C(\|G\|_\infty^2+\|\varrho_0\vartheta\|_\infty^2)\|\varrho_0^2\vartheta_t\|_2^2
  +C\|G\|_\infty\|\varrho_0^{\frac32}\vartheta_t\|_2^2,
\end{eqnarray*}
from which, by Propositions \ref{Prop2.4}--\ref{Prop2.5} and $v_y=\frac1\mu(JG+R\varrho_0\vartheta)$, one
obtains
\begin{eqnarray*}
  &&c_v\frac{d}{dt}\|\varrho_0^{\frac32}\vartheta_t\|_2^2+\kappa\left\|\frac{\varrho_0}{\sqrt J}\vartheta_{yt}\right\|_2^2\\
  &\leq&C(\|G\|_\infty^2+\|\varrho_0\vartheta\|_\infty^2+1)\|\varrho_0^{\frac32}\vartheta_t\|_2^2
  +C(\|v_y\|_\infty^2+\|G\|_\infty^2 +\|G_t\|_2^2+\|\varrho_0\vartheta_t\|_2^2)\\
  &\leq&C(\|G\|_\infty^2+\|\varrho_0\vartheta\|_\infty^2+1)\|\varrho_0^{\frac32}\vartheta_t\|_2^2+C(\|G\|_\infty^2+\|\varrho_0\vartheta\|_\infty^2
  +\|G_t\|_2^2+\|\varrho_0\vartheta_t\|_2^2).
\end{eqnarray*}
Applying the Gr\"onwall inequality to the above, one can get by Propositions \ref{Prop2.2}--\ref{Prop2.3} and \ref{Prop2.5}--\ref{Prop2.6}, and Corollary \ref{Cor2.1} that
\begin{eqnarray}
  &&\sup_{0\leq t\leq T}\|\varrho_0^{\frac32}\vartheta_t\|_2^2+\int_0^T\|\varrho_0\vartheta_{yt}\|_2^2dt\nonumber \\
  &\leq&Ce^{C\int_0^T(\|G\|_\infty^2+\|\varrho_0\vartheta\|_\infty^2)dt} \left.\big\|\varrho_0^{\frac32}
  \vartheta_t\big\|_2^2\right|_{t=0}
  \nonumber\\
  &&+Ce^{C\int_0^T(\|G\|_\infty^2+\|\varrho_0\vartheta\|_\infty^2)dt}
  \int_0^T(\|G\|_\infty^2+\|\varrho_0\vartheta\|_\infty^2+\|G_t\|_2^2+\|\varrho_0\vartheta_t\|_2^2)dt \nonumber \\
  &\leq&C(1+\|\sqrt{\varrho_0}\vartheta_0''\|_2^2+\|\sqrt{\varrho_0}v_0'G_0\|_2^2), \label{2.13-1}
\end{eqnarray}
where the fact that $\varrho_0^{\frac32}\vartheta_t|_{t=0}=\frac{\sqrt{\varrho_0}}{c_v}(\kappa\vartheta_0''+v_0'G_0)$ has been used, which
follows from (\ref{2.11-1}). Therefore, noticing that Lemma \ref{Lem2.1}
implies
$$
\|\sqrt{\varrho_0}v_0'G_0\|_2^2\leq C\|v_0'\|_\infty^2\|G_0\|_2^2\leq C\|v_0'\|_{H^1}^2\|G_0\|_2^2,
$$
one gets from (\ref{2.13-1}) that
\begin{equation}
\sup_{0\leq t\leq T}\|\varrho_0^{\frac32}\vartheta_t\|_2^2+\int_0^T\|\varrho_0\vartheta_{yt}\|_2^2dt
  \leq C(1+\|\sqrt{\varrho_0}\vartheta_0''\|_2^2).\label{2.13}
\end{equation}
Note that
\begin{equation*}
  \vartheta_{yy}=J\left(\frac{\vartheta_y}{J}\right)_y+\frac{\vartheta_y}{J}J_y
  =\frac1\kappa(c_v\varrho_0\vartheta_t-v_yG)J+\frac1J\vartheta_yJ_y.
\end{equation*}
It follows from this, (\ref{2.13}), Proposition \ref{Prop2.4}, and Corollary \ref{Cor2.1} that
\begin{eqnarray}
  \|\sqrt{\varrho_0}\vartheta_{yy}\|_2^2&\leq&C(\|\varrho_0^{\frac32}\vartheta_t\|_2^2+\|v_y\|_2^2\|G\|_\infty^2+\|\sqrt{\varrho_0}
  \vartheta_y\|_\infty^2\|J_y\|_2^2) \nonumber\\
  &\leq&C(1+\|G\|_\infty^2+\|\sqrt{\varrho_0}\vartheta_y\|_\infty^2).\label{2.14}
\end{eqnarray}
It remains to estimate $\|G\|_\infty^2$ and $\|\sqrt{\varrho_0}\vartheta_y\|_\infty^2$ as follows.
Note that Lemma \ref{Lem2.1}, Proposition \ref{Prop2.3}, and Proposition \ref{Prop2.6} imply that
\begin{equation}
  \label{2.15}
  \|G\|_\infty^2\leq C\|G\|_2(\|G\|_2+\|G_y\|_2)\leq C.
\end{equation}
By Lemma \ref{Lem2.1} and (H1), and Proposition \ref{Prop2.5}, it holds that
\begin{eqnarray}
  \|\sqrt{\varrho_0}\vartheta_y\|_\infty^2&\leq&C\|\sqrt{\varrho_0}\vartheta_y\|_2\left(\|\sqrt{\varrho_0}\vartheta_y\|_2+\|\sqrt{\varrho_0}\vartheta_{yy}\|_2
  +\left\|\frac{\varrho_0'}{\sqrt{\varrho_0}}\vartheta_y\right\|_2\right)\nonumber\\
  &\leq&C(1+\|\sqrt{\varrho_0}\vartheta_{yy}\|_2). \label{2.16}
\end{eqnarray}
Plugging (\ref{2.15}) and (\ref{2.16}) into (\ref{2.14}) and using the Cauchy inequality yield
\begin{equation*}
  \|\sqrt{\varrho_0}\vartheta_{yy}\|_2^2\leq C(1+\|\sqrt{\varrho_0}\vartheta_{yy}\|_2)\leq\frac{ \|\sqrt{\varrho_0}\vartheta_{yy}\|_2^2}{2}+C,
\end{equation*}
which gives $ \|\sqrt{\varrho_0}\vartheta_{yy}\|_2^2\leq C$. This completes the proof.
\end{proof}

\begin{proposition}
  \label{Prop2.8}
  It holds that
  $$
  \sup_{0\leq t\leq T}\left(\|\sqrt{\varrho_0}v_t\|_2^2+\left\|\frac{v_{yy}}{\sqrt{\varrho_0}}\right\|_2^2\right)
  +\int_0^T(\|v_{yt}\|_2^2+\|v_{yyy}\|_2^2+\|J_{yy}\|_2^2)dt\leq C.
  $$
\end{proposition}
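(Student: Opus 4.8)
The plan is to obtain the bound on $\sup_t\|\sqrt{\varrho_0}v_t\|_2^2$ by differentiating the momentum equation in time, then to recover the spatial regularity bounds from elliptic estimates for the quantities $v_{yy}$, $v_{yyy}$, and $J_{yy}$ algebraically. First I would differentiate (\ref{Eqv}) in $t$; using $J_t=v_y$ one obtains an equation of the schematic form
\begin{equation*}
\varrho_0 v_{tt}-\mu\left(\frac{v_{yt}}{J}\right)_y=\mu\left(\frac{v_y^2}{J^2}\right)_y-\pi_{yt},
\end{equation*}
and $\pi_{yt}$ can be rewritten in terms of $G_t$ via $\pi=\mu\frac{v_y}{J}-G$, so that $\pi_{yt}=\mu\left(\frac{v_{yt}}{J}-\frac{v_y^2}{J^2}\right)_y-G_{yt}$ is not directly useful; instead I would keep $\pi_{yt}=R\left(\frac{\varrho_0\vartheta}{J}\right)_{yt}$ and expand it using $J_t=v_y$ and the temperature estimates from Propositions \ref{Prop2.5} and \ref{Prop2.7}. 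Then I would test this equation with $v_t$, integrate by parts using the boundary condition $v_{yt}|_{y=\alpha,\beta}=0$ (which follows from differentiating $v_y|_{y=\alpha,\beta}=0$ in $t$), and arrive at
\begin{equation*}
\frac12\frac{d}{dt}\|\sqrt{\varrho_0}v_t\|_2^2+\mu\left\|\frac{v_{yt}}{\sqrt J}\right\|_2^2\leq\text{(lower-order terms)}.
\end{equation*}
The right-hand side contains terms like $\int\frac{v_y^2}{J^2}v_{yt}\,dy$, $\int\left(\frac{\varrho_0\vartheta}{J}\right)_tv_{yt}\,dy$, which are absorbed into the dissipation after estimating $v_y$, $\vartheta$, $\vartheta_t$, $v_t$ in the appropriate norms using Corollary \ref{Cor2.1} and Propositions \ref{Prop2.2}--\ref{Prop2.7}; in particular $\left(\frac{\varrho_0\vartheta}{J}\right)_t$ involves $\varrho_0\vartheta_t$ and $\varrho_0\vartheta v_y/J$, both controlled. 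The initial value $\|\sqrt{\varrho_0}v_t\|_2^2|_{t=0}$ is $\|G_0'/\sqrt{\varrho_0}\|_2^2$ (using $\sqrt{\varrho_0}v_t=G_y/\sqrt{\varrho_0}$ at $t=0$), which is finite by hypothesis; a Gr\"onwall argument then closes the estimate for $\sup_t\|\sqrt{\varrho_0}v_t\|_2^2+\int_0^T\|v_{yt}\|_2^2\,dt$.

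For the remaining bounds I would argue as follows. From $v_{yy}=\frac1\mu(J_yG+JG_y+R\varrho_0'\vartheta+R\varrho_0\vartheta_y)$ (formula (\ref{VY-1})), and noticing that $\frac{v_{yy}}{\sqrt{\varrho_0}}$ only needs division by $\sqrt{\varrho_0}$, I would use $\|G_y/\sqrt{\varrho_0}\|_2\in L^\infty(0,T)$ from Proposition \ref{Prop2.6}, $\|J_y/\sqrt{\varrho_0}\|_2\in L^\infty(0,T)$ from Proposition \ref{Prop2.4}, $\|G\|_\infty\leq C$ from (\ref{2.15}), and (H1) together with $\|\sqrt{\varrho_0}\vartheta_y\|_\infty\leq C$ coming from (\ref{2.16}) and Proposition \ref{Prop2.7}, to conclude $\sup_t\|v_{yy}/\sqrt{\varrho_0}\|_2^2\leq C$. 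Differentiating (\ref{VY-1}) once more in $y$ expresses $v_{yyy}$ in terms of $J_{yy}$, $J_yG_y$, $JG_{yy}$, $\varrho_0''\vartheta$, $\varrho_0'\vartheta_y$, $\varrho_0\vartheta_{yy}$; here $\left(\frac{G_y}{\varrho_0}\right)_y\in L^2(0,T;L^2)$ from Proposition \ref{Prop2.6} gives control on $G_{yy}$ modulo lower-order terms involving $\varrho_0'$, and $\vartheta_{yy}\in L^2(0,T;L^2)$ from Proposition \ref{Prop2.5} handles the temperature piece, so $\int_0^T\|v_{yyy}\|_2^2\,dt\leq C$ once $\int_0^T\|J_{yy}\|_2^2\,dt$ is bounded. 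Finally, $J_{yy}$ is obtained by integrating $J_{yyt}$ in time: differentiating the formula $J_{yt}=\frac1\mu(JG_y+J_yG+R\varrho_0'\vartheta+R\varrho_0\vartheta_y)$ in $y$ and using $\|v_{yt}\|_2\in L^2(0,T)$ (just proved), $\left(\frac{G_y}{\varrho_0}\right)_y\in L^2(0,T;L^2)$, $\vartheta_{yy}\in L^2(0,T;L^2)$, and (H1), one gets $\int_0^T\|J_{yyt}\|_2^2\,dt\leq C$, whence $\sup_t\|J_{yy}\|_2^2+\int_0^T\|J_{yy}\|_2^2\,dt\leq C$ since $J_{yy}|_{t=0}=0$.

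The main obstacle I anticipate is the $v_{tt}$-estimate: one must be careful that the time-differentiated momentum equation only produces terms that, after integration by parts, can be absorbed by the $\|v_{yt}/\sqrt J\|_2^2$ dissipation or bounded by quantities already controlled. The delicate terms are those where the pressure time-derivative $\pi_{yt}$ meets $v_{yt}$: writing $\pi_t=R\frac{\varrho_0}{J}\vartheta_t-R\frac{\varrho_0\vartheta}{J^2}v_y$ and integrating by parts moves a $y$-derivative onto $\pi_t$, producing $(\varrho_0\vartheta_t)_y$-type terms; these require the bound $\int_0^T\|\varrho_0\vartheta_{yt}\|_2^2\,dt\leq C$ from Proposition \ref{Prop2.7}, and the weight $\varrho_0$ (versus $\sqrt{\varrho_0}$) must be tracked carefully so that the available estimates suffice. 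Everything else is a routine chain of Gagliardo--Nirenberg interpolations via Lemma \ref{Lem2.1} and applications of Corollary \ref{Cor2.1} and the earlier propositions; the ordering above — first $\sqrt{\varrho_0}v_t$, then $v_{yy}/\sqrt{\varrho_0}$, then $J_{yy}$, then $v_{yyy}$ — ensures each step uses only previously established bounds.
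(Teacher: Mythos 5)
Your treatment of $\sup_t\|\sqrt{\varrho_0}v_t\|_2^2$ and $\int_0^T\|v_{yt}\|_2^2\,dt$ via a time-differentiated momentum energy estimate is far heavier than what the proposition requires, and introduces avoidable complications. The momentum equation (\ref{Eqv}), rewritten through the effective viscous flux, says precisely $\varrho_0 v_t = G_y$, so $\sqrt{\varrho_0}v_t = G_y/\sqrt{\varrho_0}$ and the $\sup_t$ bound is \emph{immediate} from Proposition \ref{Prop2.6} — no equation for $v_{tt}$, no boundary term, no Gr\"onwall. Likewise $v_{yt}$ is given algebraically by (\ref{VY-2}): $v_{yt}=\frac1\mu(JG_t+v_yG+R\varrho_0\vartheta_t)$, so $\int_0^T\|v_{yt}\|_2^2\,dt$ follows at once from $\int_0^T\|G_t\|_2^2\,dt$ (Proposition \ref{Prop2.6}), $\|G\|_\infty\leq C$ from (\ref{2.15}), $\sup_t\|v_y\|_2\leq C$, and $\int_0^T\|\varrho_0\vartheta_t\|_2^2\,dt$ (Proposition \ref{Prop2.5}). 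Your route, if carried out, additionally demands an a~priori justification that the $v_{tt}$-computation is legitimate in the regularity class at hand, and you yourself flag the $\varrho_0$-weight bookkeeping for the pressure term as delicate; the algebraic identity sidesteps all of that.

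The more serious issue is a genuine circularity in your sequencing for $v_{yyy}$ and $J_{yy}$. You correctly note that $\int_0^T\|v_{yyy}\|_2^2\,dt\leq C$ only once $\int_0^T\|J_{yy}\|_2^2\,dt$ is bounded, because the expansion of $v_{yyy}$ contains the term $\frac1\mu J_{yy}G$. But you then try to obtain the $J_{yy}$ bound by integrating $J_{yyt}$ in time, claiming $\int_0^T\|J_{yyt}\|_2^2\,dt\leq C$ from the listed ingredients — and this is exactly where the argument fails, because $J_{yyt}=v_{yyy}$, so the very same $J_{yy}G$ term reappears and the ingredients you list do \emph{not} suffice. (Note also that $\|v_{yt}\|_{L^2(0,T;L^2)}$, which you invoke, does not actually enter the formula for $J_{yyt}$.) Your proposed linear ordering ``\dots then $J_{yy}$, then $v_{yyy}$'' cannot be sustained. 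The correct resolution is the bootstrap in the paper: derive the two coupled inequalities
\begin{align*}
\int_0^t\|v_{yyy}\|_2^2\,d\tau &\leq C\Big(1+\int_0^t\|J_{yy}\|_2^2\,d\tau\Big),\\
\int_0^t\|J_{yy}\|_2^2\,d\tau &\leq C\int_0^t\int_0^\tau\|v_{yyy}\|_2^2\,d\tau'\,d\tau
\end{align*}
(the second coming from $J_{yy}=\int_0^t v_{yyy}\,d\tau$), substitute the second into the first to obtain a closed integral inequality for $\int_0^t\|v_{yyy}\|_2^2\,d\tau$, and apply Gr\"onwall. Without this step your argument for the last two bounds does not close.
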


\begin{proof}
The estimate for $\sqrt{\varrho_0}v_t$ follows directly from Proposition \ref{Prop2.6} since
$\sqrt{\varrho_0}v_t=\frac{G_y}{\sqrt{\varrho_0}}$.
It follows from (H1), (\ref{VY-1}), (\ref{VY-2}),
(\ref{2.15}), Corollary \ref{Cor2.1}, and Propositions \ref{Prop2.2}--\ref{Prop2.6} that
\begin{eqnarray*}
\left\|\frac{v_{yy}}{\sqrt{\varrho_0}}\right\|_2^2 \leq C\left(\left\|\frac{J_y}{\sqrt{\varrho_0}}\right\|_2^2\|G\|_\infty^2+\left\|\frac{G_y}{\sqrt{\varrho_0}}\right\|_2^2
+\|\sqrt{\varrho_0}\vartheta_y\|_2^2
+\|\sqrt{\varrho_0}\vartheta\|_2^2\right)\leq C,\\
\int_0^T\|v_{yt}\|_2^2dt \leq C\int_0^T(\|v_y\|_2^2\|G\|_\infty^2+\|G_t\|_2^2+\|\varrho_0\vartheta_t\|_2^2)dt\leq C.
\end{eqnarray*}
Noticing that
\begin{equation*}
  v_{yyy}=\frac1\mu(J_{yy}G+2J_yG_y+JG_{yy}+R\varrho_0''\vartheta+2R\varrho_0'\vartheta_y+R\varrho_0\vartheta_{yy}).
\end{equation*}
one can get from (H1), (\ref{2.15}), Corollary \ref{Cor2.1},
and Propositions \ref{Prop2.2}--\ref{Prop2.3} and \ref{Prop2.5}--\ref{Prop2.6} that
\begin{eqnarray}
  \int_0^t\|v_{yyy}\|_2^2d\tau&\leq&C\int_0^t(\|J_{yy}\|_2^2\|G\|_\infty^2+\|J_y\|_\infty^2\|G_y\|_2^2
  +\|G_{yy}\|_2^2\nonumber\\
  &&+\|\varrho_0\vartheta\|_2^2+\|\varrho_0\vartheta_y\|_2^2+\|\varrho_0\vartheta_{yy}\|_2^2)d\tau \nonumber\\
  &\leq&C\int_0^t(\|J_{yy}\|_2^2+\|J_y\|_\infty^2+\|G_{yy}\|_2^2)d\tau+C, \label{2.17}
\end{eqnarray}
where $\|G\|_\infty^2\leq C(\|G\|_2^2+\|G_y\|_2^2)$ guaranteed by Lemma \ref{Lem2.1} wa used.
Next, $\|J_y\|_\infty^2$ and $\|G_{yy}\|_2^2$ can be estimated as follows.
Lemma \ref{Lem2.1} and Proposition \ref{Prop2.4} imply that
\begin{equation}
  \label{2.18}
  \|J_y\|_\infty^2\leq C(\|J_y\|_2^2+\|J_y\|_2\|J_{yy}\|_2)\leq C(1+\|J_{yy}\|_2^2).
\end{equation}
While (H1) and Proposition \ref{Prop2.6} yield
\begin{eqnarray*}
  \int_0^T\|G_{yy}\|_2^2dt&\leq&\int_0^T\left(\left\|\varrho_0\left(\frac{G_y}{\varrho_0}\right)_y\right\|_2
  +\left\|\varrho_0'\frac{G_y}{\varrho_0}\right\|_2\right)^2dt\\
  &\leq& C\int_0^T\left(\left\|\left(\frac{G_y}{\varrho_0}\right)_y\right\|_2^2+\left\|G_y\right\|_2^2\right)dt
  \leq C.
\end{eqnarray*}
It follows from this, (\ref{2.17}), and (\ref{2.18}) that
\begin{equation}
  \int_0^t\|v_{yyy}\|_2^2d\tau\leq C\left(1+\int_0^t\|J_{yy}\|_2^2d\tau\right). \label{2.19}
\end{equation}
Since $J_{yy}=\int_0^tv_{yyy}d\tau$, one has
\begin{equation}
  \int_0^t\|J_{yy}\|_2^2d\tau\leq \int_0^t\left\|\int_0^\tau v_{yyy}d\tau'\right\|_2^2d\tau
  \leq C\int_0^t\left(\int_0^\tau\|v_{yyy}\|_2^2d\tau'\right)d\tau.\label{2.19-1}
\end{equation}
Plugging this into (\ref{2.19}) leads to
\begin{equation*}
  \int_0^t\|v_{yyy}\|_2^2d\tau\leq C+C\int_0^t\left(\int_0^\tau\|v_{yyy}\|_2^2d\tau'\right)d\tau,
\end{equation*}
which implies
$\int_0^T\|v_{yyy}\|_2^2dt\leq Ce^T\leq C$ by the Gr\"onwall inequality.
This, together with (\ref{2.19-1}), shows that
$\int_0^t\|J_{yy}\|_2^2d\tau\leq C.$
This completes the proof.
\end{proof}

\begin{proposition}
  \label{Prop2.9}
  It holds that
  $$
  \sup_{0\leq t\leq T}(\|J_{yy}\|_2^2+\|J_{yt}\|_2^2)\leq C.
  $$
\end{proposition}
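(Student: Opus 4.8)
The plan is to obtain both bounds directly from the relation $J_t=v_y$ together with the higher-order estimates already established in Proposition \ref{Prop2.8}; no new energy estimate is needed. First I would dispose of $J_{yt}$: differentiating $J_t=v_y$ once in $y$ gives the pointwise identity $J_{yt}=v_{yy}$, and since $\varrho_0\in W^{2,\infty}(\mathbb R)\subset L^\infty(\mathbb R)$ one has
$$\sup_{0\le t\le T}\|J_{yt}\|_2^2=\sup_{0\le t\le T}\|v_{yy}\|_2^2\le\|\varrho_0\|_\infty\sup_{0\le t\le T}\left\|\frac{v_{yy}}{\sqrt{\varrho_0}}\right\|_2^2\le C$$
by Proposition \ref{Prop2.8}. (Equivalently, one may start from the identity (\ref{VY-1}) for $v_{yy}$ and use Propositions \ref{Prop2.2}, \ref{Prop2.4}, \ref{Prop2.6}, the bound (\ref{2.15}) for $\|G\|_\infty$, and (H1), again converting the $\tfrac1{\sqrt{\varrho_0}}$-weighted norms into unweighted ones via $\varrho_0\in L^\infty$.)

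Next I would treat $J_{yy}$. Since $J|_{t=0}\equiv1$ is constant in $y$, we have $J_{yy}|_{t=0}=0$; differentiating $J_t=v_y$ twice in $y$ gives $\partial_t J_{yy}=v_{yyy}$, hence
$$J_{yy}(\cdot,t)=\int_0^t v_{yyy}(\cdot,\tau)\,d\tau,\qquad t\in[0,T].$$
Applying Minkowski's inequality in the $y$-variable, then the Cauchy--Schwarz inequality in $\tau$, and finally the bound $\int_0^T\|v_{yyy}\|_2^2\,d\tau\le C$ from Proposition \ref{Prop2.8}, one gets
$$\|J_{yy}(\cdot,t)\|_2\le\int_0^t\|v_{yyy}(\cdot,\tau)\|_2\,d\tau\le\sqrt T\left(\int_0^T\|v_{yyy}\|_2^2\,d\tau\right)^{\!1/2}\le C$$
for every $t\in[0,T]$, which completes the argument.

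I do not expect a genuine obstacle here: all of the analytical work has already been carried out in Propositions \ref{Prop2.2}--\ref{Prop2.8}, and the present statement is essentially a bookkeeping consequence of them. The only point deserving a moment's attention is the direction in which the weighted inequalities are used — one passes from $\tfrac1{\sqrt{\varrho_0}}$-weighted estimates to unweighted ones, which is legitimate precisely because $\varrho_0$ is bounded \emph{above}, so that (unlike in the proof of Theorem \ref{UNBDDENTROPY}) no assumption on the far-field decay of $\varrho_0$ enters.
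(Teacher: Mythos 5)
Your proof is correct and follows essentially the same route as the paper's, which likewise invokes the identities $J_{yt}=v_{yy}$ and $J_{yy}=\int_0^t v_{yyy}\,d\tau$ together with Proposition \ref{Prop2.8}; you have merely spelled out the Minkowski/Cauchy--Schwarz step and the passage from the $\tfrac1{\sqrt{\varrho_0}}$-weighted bound on $v_{yy}$ to the unweighted one via $\varrho_0\in L^\infty$, both of which the paper leaves implicit.
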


\begin{proof}
This follows from Proposition \ref{Prop2.8} by using $J_{yy}=\int_0^tv_{yyy}d\tau$ and $J_{yt}=v_{yy}$.
\end{proof}

\begin{proposition}
  \label{Prop2.10}
  It holds that
  $$
  \int_0^T\|\vartheta_{yyy}\|_2^2dt\leq C.
  $$
\end{proposition}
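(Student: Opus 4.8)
The plan is to close the a priori estimate hierarchy by deriving a pointwise-in-time identity for $\vartheta_{yyy}$ and then bounding each resulting term by quantities already controlled in Propositions \ref{Prop2.2}--\ref{Prop2.9}, together with the interpolation inequality of Lemma \ref{Lem2.1}. First I would rewrite (\ref{Eqtheta}) as (\ref{2.11-1}) and expand $(\vartheta_y/J)_y$ to recover the identity
\begin{equation*}
\kappa\vartheta_{yy}=J\,(c_v\varrho_0\vartheta_t-v_yG)+\kappa\frac{\vartheta_yJ_y}{J},
\end{equation*}
which already appears at the end of the proof of Proposition \ref{Prop2.7}. Differentiating once more in $y$ and using $(1/J)_y=-J_y/J^2$ will give
\begin{align*}
\kappa\vartheta_{yyy}={}&J\big(c_v\varrho_0'\vartheta_t+c_v\varrho_0\vartheta_{yt}-v_{yy}G-v_yG_y\big)+J_y\big(c_v\varrho_0\vartheta_t-v_yG\big)\\
&+\kappa\left(\frac{\vartheta_{yy}J_y}{J}+\frac{\vartheta_yJ_{yy}}{J}-\frac{\vartheta_yJ_y^2}{J^2}\right).
\end{align*}

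Next I would take the $L^2((\alpha,\beta))$ norm of the right-hand side and integrate over $(0,T)$. By Corollary \ref{Cor2.1} the quantities $J$ and $1/J$ are bounded; by (\ref{2.15}), (\ref{2.18}) and Proposition \ref{Prop2.9}, $\|G\|_\infty$, $\|J_y\|_\infty$ and $\|J_{yy}\|_2$ are bounded; by Proposition \ref{Prop2.8} together with $\varrho_0\le\|\varrho_0\|_\infty$, $\|v_{yy}\|_2$ and, via Lemma \ref{Lem2.1}, $\|v_y\|_\infty$ are bounded; and by Proposition \ref{Prop2.6}, $\|G_y\|_2$ is bounded. The remaining factors are only integrable in time: $\|\varrho_0\vartheta_t\|_2$ and $\|\vartheta_{yy}\|_2$ come from Proposition \ref{Prop2.5} (and $\|\varrho_0'\vartheta_t\|_2\le K_1\|\varrho_0\vartheta_t\|_2$ by (H1)), $\|\varrho_0\vartheta_{yt}\|_2$ from Proposition \ref{Prop2.7}, $\|\vartheta_y\|_2$ from Proposition \ref{Prop2.2} (since $J$ is bounded), and $\|\vartheta_y\|_\infty^2\le C\|\vartheta_y\|_2(\|\vartheta_y\|_2+\|\vartheta_{yy}\|_2)$ from Lemma \ref{Lem2.1}, the last being in $L^1(0,T)$. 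Estimating the right-hand side term by term, always pairing one $L^\infty$ factor with one $L^2$ factor, will then give $\int_0^T\|\vartheta_{yyy}\|_2^2\,dt\le C$.

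The only obstacle here is bookkeeping, and it is mild. One must take care that $\vartheta_{yt}$ enters exclusively through the weighted combination $\varrho_0\vartheta_{yt}$, so that the bound of Proposition \ref{Prop2.7}, rather than an unavailable unweighted bound, suffices; and that in the product $\vartheta_yJ_{yy}$ the factor $\vartheta_y$ is placed in $L^\infty$ while $J_{yy}$ is kept in $L^2$, so that the single worst factor $\|\vartheta_y\|_\infty^2$ need only be $L^1$ in time. No new differential inequality or Gr\"onwall argument is required; this is simply the last step of the a priori estimates.
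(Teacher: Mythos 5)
Your proposal is correct and follows essentially the same approach as the paper. The paper differentiates the identity $\kappa\vartheta_{yy}=J(c_v\varrho_0\vartheta_t-v_yG)+\kappa\vartheta_yJ_y/J$ once more in $y$ and additionally substitutes that same identity back into the resulting $\vartheta_{yy}J_y/J$ term, which cancels the $\vartheta_yJ_y^2/J^2$ contribution and yields the compact form
\begin{equation*}
\vartheta_{yyy}=\frac{c_v}{\kappa}\bigl(\varrho_0'J\vartheta_t+2\varrho_0J_y\vartheta_t+\varrho_0J\vartheta_{yt}\bigr)+\frac{\vartheta_y}{J}J_{yy}-\frac1\kappa\bigl(2J_yv_yG+Jv_{yy}G+Jv_yG_y\bigr);
\end{equation*}
you keep the extra $\vartheta_{yy}J_y/J$ and $\vartheta_yJ_y^2/J^2$ terms, which is algebraically equivalent and costs nothing, since $J_y/J\in L^\infty$ and $\|\vartheta_{yy}\|_2\in L^2(0,T)$ control them. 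The subsequent term-by-term bounds, the pairings in $L^\infty\times L^2$, and the sources of each bound (Propositions \ref{Prop2.2}--\ref{Prop2.9}, Corollary \ref{Cor2.1}, the estimates (\ref{2.15}) and (\ref{2.18}), and Lemma \ref{Lem2.1}) are the same as in the paper. Your observations that $\vartheta_{yt}$ must enter only through $\varrho_0\vartheta_{yt}$ and that $\vartheta_y$ must be placed in $L^\infty$ against $J_{yy}\in L^2$ are exactly the bookkeeping points the paper relies on.
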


\begin{proof}
By Lemma \ref{Lem2.1} and Propositions \ref{Prop2.3}, \ref{Prop2.4}, \ref{Prop2.6}, \ref{Prop2.8}, and \ref{Prop2.9}, one has
\begin{equation}
  \|J_y\|_\infty+\|J_{yy}\|_2+\|v_y\|_\infty+\|G\|_\infty\leq C. \label{2.20-1}
\end{equation}
It follows from (\ref{2.11-1})
that
\begin{eqnarray*}
  \vartheta_{yyy}&=&\frac{c_v}{\kappa}(\varrho_0'J\vartheta_t+2\varrho_0 J_y\vartheta_t+\varrho_0J\vartheta_{yt})+\frac{\vartheta_y}J J_{yy}\\
  &&-\frac1\kappa(2J_yv_yG+Jv_{yy}G+Jv_yG_y).
\end{eqnarray*}
Then, by Corollary \ref{Cor2.1}, (H1), (\ref{2.20-1}), and Proposition \ref{Prop2.9}, one deduces
\begin{eqnarray*}
  \int_0^T\|\vartheta_{yyy}\|_2^2dt
  &\leq&C\int_0^T(\|\varrho_0\vartheta_t\|_2^2+\|J_y\|_\infty^2\|\varrho_0\vartheta_t\|_2^2
  +\|\varrho_0\vartheta_{yt}\|_2^2 +\|\vartheta_y\|_\infty^2\|J_{yy}\|_2^2 \\
  &&+\|J_y\|_\infty^2\|v_y\|_2^2\|G\|_\infty^2+\|v_{yy}\|_2^2\|G\|_\infty^2
  +\|v_y\|_\infty^2\|G_y\|_2^2)dt \\
  &\leq&C\int_0^T(\|\varrho_0\vartheta_t\|_2^2
  +\|\varrho_0\vartheta_{yt}\|_2^2+\|\vartheta_y\|_2^2 \\
  &&+\|\vartheta_{yy}\|_2^2 + \|v_y\|_2^2 +\|v_{yy}\|_2^2
  + \|G_y\|_2^2)dt,
\end{eqnarray*}
where $\|\vartheta_y\|_\infty^2\leq C(\|\vartheta_y\|_2^2+\|\vartheta_{yy}\|_2^2)$ guaranteed by Lemma \ref{Lem2.1} was used,
from which, by Corollary \ref{Cor2.1} and Propositions \ref{Prop2.2}--\ref{Prop2.8}, it follows $\int_0^T\|\vartheta_{yyy}\|_2^2dt\leq C$. This proves the conclusion.
\end{proof}

As a consequence of Propositions \ref{BASIC}--\ref{Prop2.10} and Corollary \ref{Cor2.1}, one has:

\begin{corollary}
\label{CorAPRI}
Let $(J, v, \vartheta)$ be the unique global solution stated in Proposition \ref{PROPGLOBAL} to system (\ref{EqJ})--(\ref{Eqtheta}), subject to (\ref{IC})--(\ref{BC}), and $\mathscr N_0$ be given by (\ref{N0}). Then, for any $T\in[0,\infty)$, it holds that
\begin{align*}
\inf_{(\alpha,\beta)\times(0,T)}J\geq\underline C_T, \quad\sup_{0\leq t\leq T} \left\|\left(\frac{J_y}{\sqrt{\varrho_0}}, J_{yy}, J_t, J_{yt}\right)\right\|_{L^2((\alpha,\beta))}^2
&\leq C_T, \\
\sup_{0\leq t\leq T} \left\|\left(\sqrt{\varrho_0}v,\sqrt{\varrho_0}v^2, v_y, \frac{v_{yy}}{\sqrt{\varrho_0}},
\sqrt{\varrho_0}v_t\right)\right\|_{L^2((\alpha,\beta))}^2& \\
+\int_0^T \|(v_{yyy},v_{yt}) \|_{L^2((\alpha,\beta))}^2 dt&\leq C_T,\\
\sup_{0\leq t\leq T}\left(\|\varrho_0\vartheta\|_{L^1((\alpha,\beta))}+ \left\|\left(\sqrt{\varrho_0}\vartheta, \sqrt{\varrho_0}\vartheta_y, \sqrt{\varrho_0}\vartheta_{yy}, \varrho_0^\frac32\vartheta_t\right)\right\|_{L^2((\alpha,\beta))}^2\right)&  \\
 +\int_0^T\left(\|\vartheta_y\|_{H^2((\alpha,\beta))}^2+\|(\varrho_0\vartheta_t,\varrho_0\vartheta_{yt})
  \|_{L^2((\alpha,\beta))}^2\right)dt& \leq C_T,\\
\sup_{0\leq t\leq T} \left\|\frac{G_y}{\sqrt{\varrho_0}}\right\|_{L^2((\alpha,\beta))}^2
+\int_0^T\left(\left\|\left(\frac{G_y}{\varrho_0}\right)_y\right\|_{L^2((\alpha,\beta))}^2+
  \left\|G_t\right\|_{L^2((\alpha,\beta))}^2\right)dt& \leq C_T,
\end{align*}
where $\underline C_T$ and $C_T$ are positive constants depending only on $R, c_v, \mu, \kappa,
K_1, T$, and the upper bound of $\mathscr N_0$, but independent of $\alpha$ and $\beta$ with $\beta-\alpha\geq1$.
\end{corollary}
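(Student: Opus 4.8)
The plan is to read off the claimed bounds directly from the chain of estimates already established, verifying only that every constant produced along the way is of the advertised type, namely depending solely on $R, c_v, \mu, \kappa, K_1, T$ and an upper bound for $\mathscr N_0$ (in particular independent of $\alpha,\beta$ whenever $\beta-\alpha\geq1$). Since from Proposition \ref{Prop2.2} onwards the generic constant $C$ was fixed to be exactly of this form, and since Propositions \ref{BASIC}--\ref{Prop2.1} and Corollary \ref{Cor2.1} involve only $m_0,\mathscr E_0$ (which are part of $\mathscr N_0$), nothing further is needed once the individual statements are collated.

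Concretely, I would proceed unknown by unknown. For $J$: the uniform positive lower bound and the $L^\infty(0,T;L^2)$ bounds on $J_y/\sqrt{\varrho_0}$ and $J_t$ are contained in Corollary \ref{Cor2.1} and Proposition \ref{Prop2.4}, while those on $J_{yy}$ and $J_{yt}$ are Proposition \ref{Prop2.9}. For $v$: the bounds on $\sqrt{\varrho_0}v$ and $\sqrt{\varrho_0}v^2$ follow from Propositions \ref{BASIC} and \ref{Prop2.2} (using $E=\tfrac{v^2}{2}+c_v\vartheta$), the bound on $v_y$ from Proposition \ref{Prop2.4}, the bounds on $v_{yy}/\sqrt{\varrho_0}$ and $\sqrt{\varrho_0}v_t$ from Proposition \ref{Prop2.8}, and the $L^2(0,T;L^2)$ bounds on $v_{yyy}$ and $v_{yt}$ from Proposition \ref{Prop2.8}. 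For $\vartheta$: the bound $\|\varrho_0\vartheta\|_{L^1((\alpha,\beta))}\leq\mathscr E_0/c_v$ is immediate from Proposition \ref{BASIC}; the $L^\infty(0,T;L^2)$ bounds on $\sqrt{\varrho_0}\vartheta$, $\sqrt{\varrho_0}\vartheta_y$, $\sqrt{\varrho_0}\vartheta_{yy}$, $\varrho_0^{3/2}\vartheta_t$ come from Propositions \ref{Prop2.2}, \ref{Prop2.5} and \ref{Prop2.7}; the $L^2(0,T;L^2)$ bounds on $\vartheta_y,\vartheta_{yy},\vartheta_{yyy}$ (hence on $\|\vartheta_y\|_{H^2((\alpha,\beta))}$) from Propositions \ref{Prop2.2}, \ref{Prop2.5} and \ref{Prop2.10}, and those on $\varrho_0\vartheta_t$, $\varrho_0\vartheta_{yt}$ from Propositions \ref{Prop2.5} and \ref{Prop2.7}. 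Finally, for the effective viscous flux $G$, the bounds $G_y/\sqrt{\varrho_0}\in L^\infty(0,T;L^2)$ and $(G_y/\varrho_0)_y,\,G_t\in L^2(0,T;L^2)$ are precisely Proposition \ref{Prop2.6}.

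The only point deserving a remark is that the right-hand sides in Propositions \ref{Prop2.3} and \ref{Prop2.6} carry the initial quantities $\|G_0\|_2$ and $\|G_0'/\sqrt{\varrho_0}\|_2$; but since $G_0=\mu v_0'-R\varrho_0\vartheta_0$, both of these are among the terms comprising $\mathscr N_0$ in \eqref{N0}, so the constants in those propositions are again of the required form. Accordingly there is no genuine obstacle: Corollary \ref{CorAPRI} is a consolidation of Propositions \ref{BASIC}--\ref{Prop2.10} and Corollary \ref{Cor2.1}, and the proof amounts to citing them in the order above.
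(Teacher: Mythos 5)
Your proposal is correct and takes the same route as the paper: the paper states Corollary~\ref{CorAPRI} with the one-line justification ``as a consequence of Propositions~\ref{BASIC}--\ref{Prop2.10} and Corollary~\ref{Cor2.1},'' which is exactly the collation you carry out, and your observation that $\|G_0\|_2$ and $\|G_0'/\sqrt{\varrho_0}\|_2$ are absorbed into $\mathscr N_0$ correctly closes the one loose end.
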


\section{Global existence of solutions: proof of Theorem \ref{THMGLOBAL}}
\label{SECGLOBAL}
\begin{proof}[Proof of Theorem \ref{THMGLOBAL}] The proof is given in three steps as follows.

\textbf{Step 1. Approximations of the initial data.} By the
assumption (\ref{H3}), there are two sequences $\{\alpha_n\}_{n=1}^\infty$ and $\{\beta_n\}_{n=1}^\infty$, with
$\lim_{n\rightarrow\infty}\alpha_n=-\infty$ and $\lim_{n\rightarrow\infty}\beta_n=\infty$, and a positive constant $M_0$,
such that
\begin{equation}
  \label{AS2}
\left|\frac{v_0'(\alpha_n)}{\sqrt{\varrho_0(\alpha_n)}}\right|+\left|\frac{v_0'(\beta_n)}{\sqrt{\varrho_0(\beta_n)}}\right|\leq M_0,\quad\forall n\geq1.
\end{equation}

Set $I_n=(\alpha_n-1, \beta_n+1)$. For each $n$, choose $0\leq\chi_n\in C_0^\infty(I_n)$,
satisfying
\begin{equation}
  \chi\equiv1\mbox{ on }[\alpha_n, \beta_n], \quad 0\leq\chi_n\leq1\mbox{ and } |\chi_n'|+|\chi_n''|\leq C_0 \mbox{ on }I_n, \label{GB-1}
\end{equation}
for a positive constant $C_0$ independent of $n$. Define $v_{0n}$ and $\vartheta_{0n}$ as
\begin{equation*}
  \vartheta_{0n} =\vartheta_0 \chi_n,
\end{equation*}
and
\begin{equation*}
  v_{0n} =\left\{
  \begin{array}{ll}
  v_0(\alpha_n)+ \frac2\pi v_0'(\alpha_n)\sin\left(\frac\pi2(y-\alpha_n)\right),&y\in[\alpha_n-1,\alpha_n],\\
  v_0(y),&y\in[\alpha_n,\beta_n],\\
  v_0(\beta_n)+ \frac2\pi v_0'(\beta_n)\sin\left(\frac\pi2(y-\beta_n)\right),&y\in[\beta_n,\beta_n+1].
  \end{array}
  \right.
\end{equation*}

It can be checked easily that
\begin{equation}
  v_{0n}'(\alpha_n-1)=v_{0n}'(\beta_n+1)=\vartheta_{0n}(\alpha_n-1)=\vartheta_{0n}(\beta_n+1)=0. \label{GB-2}
\end{equation}
Noticing that
$$
v_{0n}(\alpha_n)=v_0(\alpha_n), \quad v_{0n}'(\alpha_n)=v_0'(\alpha_n),\quad v_{0n}(\beta_n)=v_0(\beta_n),\quad
  v_{0n}'(\beta_n)=v_0'(\beta_n),
$$
and since $v_0\in H_{loc}^2(\mathbb R)$ and $0\leq\vartheta_0\in H^2_{loc}(\mathbb R)$, one has
\begin{equation}
  v_{0n}\in H^2(I_n),\quad
  0\leq\vartheta_{0n}\in H^2(I_n).\label{GB-3}
\end{equation}

Due to $0\leq\chi_n\leq1$, it is clear that
\begin{equation}
  \|\sqrt{\varrho_0}\vartheta_{0n}\|_{L^2(I_n)}\leq\|\sqrt{\varrho_0}\vartheta_0\|_2. \label{GB-4}
\end{equation}
For any $y\in[\alpha_n-1, \alpha_n)$, the definition of $v_{0n}$ implies that
\begin{eqnarray*}
  |v_{0n}(y)-v_0(y)| \leq |v_0(\alpha_n)-v_0(y)|+\frac2\pi|v_0'(\alpha_n)|
  \leq 2\|v_0'\|_\infty.
\end{eqnarray*}
Similarly, it holds that $|v_{0n}(y)-v_0(y)|\leq 2\|v_0'\|_\infty$, for any $y\in(\beta_n, \beta_n+1]$. As a result, one has
\begin{equation}
  |v_{0n}(y)-v_0(y)|\leq 2\|v_0'\|_\infty,\quad\forall y\in I_n. \label{GB-5}
\end{equation}
Hence
\begin{eqnarray}
  \|\sqrt{\varrho_0}v_{0n}\|_{L^2(I_n)}&\leq&\|\sqrt{\varrho_0}(v_{0n}-v_0)\|_{L^2(I_n)}+\|\sqrt{\varrho_0}
  v_0\|_{L^2(I_n)}\nonumber\\
  &=&2\|v_0'\|_\infty\|\varrho_0\|_1^\frac12+\|\sqrt{\varrho_0}v_0\|_2, \label{GB-6}
\end{eqnarray}
and
\begin{eqnarray}
  \|\sqrt{\varrho_0}|v_{0n}|^2\|_{L^2(I_n)}&\leq&2\left\|\sqrt{\varrho_0}(|v_0|^2+|v_0-v_{0n}|^2)\right\|_{L^2(I_n)}\nonumber\\
  &\leq&2\|\sqrt{\varrho_0}|v_0|^2\|_2+8\|v_0'\|_\infty^2\| \varrho_0 \|_1^\frac12. \label{GB-7}
\end{eqnarray}

It follows from (\ref{GB-1}) and direct calculations that
\begin{eqnarray}
  \|\sqrt{\varrho_0}\vartheta_{0n}'\|_{L^2(I_n)}&\leq&\|\sqrt{\varrho_0}\vartheta_0'\|_2+C_0\|\sqrt{\varrho_0}\vartheta_0\|_2,
  \label{GB-8}\\
  \|\sqrt{\varrho_0}\vartheta_{0n}''\|_{L^2(I_n)}&\leq&\|\sqrt{\varrho_0}\vartheta_0''\|_2+2C_0
  (\|\sqrt{\varrho_0}\vartheta_0'\|_2+\|\sqrt{\varrho_0}\vartheta_0\|_2).
  \label{GB-9}
\end{eqnarray}
By direct calculations,
one gets by the Sobolev inequality that
\begin{eqnarray}
  \|v_{0n}'\|_{H^1(I_n)}&\leq&\|v_0'\|_{H^1}+C(|v_0'(\alpha_n)|+|v_0'(\beta_n)|)\nonumber\\
  &\leq& \|v_0'\|_{H^1((\alpha_n, \beta_n))}+\|v_0'\|_{H^1((\alpha_n-1,\alpha_n)\cup(\beta_n, \beta_n+1))}\nonumber\\
  &\leq& C\|v_0'\|_{H^1}^2, \label{GB-10}
\end{eqnarray}
for a positive constant $C$ independent of $n$.

Set $G_{0n}=\mu v_{0n}'-R\varrho_0\vartheta_{0n}.$
Combining (\ref{GB-4}) with (\ref{GB-10}) leads to
\begin{eqnarray}
  \|G_{0n}\|_{L^2(I_n)}
  &\leq&\mu\|v_{0n}'\|_{L^2(I_n)}+R\|\varrho_0\|_\infty^\frac12\|\sqrt{\varrho_0}\vartheta_{0n}\|_{L^2(I_n)}\nonumber\\
  &\leq& C(\|v_{0}'\|_{H^1}+\|\varrho_0\|_\infty^\frac12\|\sqrt{\varrho_0}\vartheta_0\|_2),\label{GB-11}
\end{eqnarray}
for a positive constant $C$ independent of $n$.

For $y\in(\beta_n, \beta_n+1)$, one has
\begin{eqnarray}
  \frac{\varrho_0(\beta_n)}{\varrho_0(y)}
  &=& 1+ \int_{\beta_n}^y k(z)\frac{\varrho_0(\beta_n)}{\varrho_0(z)}dz, \quad\mbox{where }k(z)=-\frac{\varrho_0'(z)}{\varrho_0(z)}. \label{GB-11'}
\end{eqnarray}
By (H1), it holds that $|k(z)|\leq K_1$, for any $z\in\mathbb R$. Set
$$
f(y)=1+\int_{\beta_n}^y k(z)\frac{\varrho_0(\beta_n)}{\varrho_0(z)}dz, \quad\forall y\in(\beta_n, \beta_n+1).
$$
Then, it follows from (\ref{GB-11'}) that
\begin{equation*}
  f'(y)=k(y)\frac{\varrho_0(\beta_n)}{\varrho_0(y)}=k(y)f(y),
\end{equation*}
and thus
$$
f(y)=e^{\int_{\beta_n}^yk(z)dz}f(\beta_n)=e^{\int_{\beta_n}^yk(z)dz}\leq e^{K_1}, \quad\forall y\in(\beta_n, \beta_n+1).
$$
It follows from this and (\ref{GB-11'}) that
\begin{equation}
  \frac{\varrho_0(\beta_n)}{\varrho_0(y)}= f(y)\leq e^{K_1},\quad \forall y\in(\beta_n, \beta_n+1). \label{GB-12}
\end{equation}
Similarly, one has
\begin{equation}
  \frac{\varrho_0(\alpha_n)}{\varrho_0(y)} \leq e^{K_1}, \quad \forall y\in(\alpha_n-1, \alpha_n). \label{GB-13}
\end{equation}

Recall that $G_{0n}=\mu v_{0n}'-R\varrho_0\vartheta_{0n}.$ Then, direct calculations yield
\begin{equation*}
\frac{G_{0n}'}{\sqrt{\varrho_0}}
=\left\{
\begin{array}{ll}
-\frac\pi2\mu\frac{v_0'(\alpha_n)}{\sqrt{\varrho_0}}\sin\left(\frac\pi2(y-\alpha_n)\right)-R\left(\sqrt{\varrho_0}
\vartheta_{0n}'+\frac{\varrho_0'}{\sqrt{\varrho_0}}\vartheta_{0n}\right),& y\in(\alpha_n-1,\alpha_n),\\
\frac{G_0'}{\sqrt{\varrho_0}}, &y\in(\alpha_n, \beta_n), \\
-\frac\pi2\mu\frac{v_0'(\beta_n)}{\sqrt{\varrho_0}}\sin\left(\frac\pi2(y-\beta_n)\right)-R\left(\sqrt{\varrho_0}
\vartheta_{0n}'+\frac{\varrho_0'}{\sqrt{\varrho_0}}\vartheta_{0n}\right),& y\in(\beta_n,\beta_n+1).
\end{array}
\right.
\end{equation*}
It follows from (\ref{AS2}) and (\ref{GB-12})--(\ref{GB-13}) that
\begin{eqnarray*}
\left|\frac{v_0'(\alpha_n)}{\sqrt{\varrho_0(y)}}\right|+\left|\frac{v_0'(\beta_n)}{\sqrt{\varrho_0(y)}}\right|
&=&\left|\frac{v_0'(\alpha_n)}{\sqrt{\varrho_0(\alpha_n)}}\sqrt{\frac{\varrho_0(\alpha_n)}
{{\varrho_0(y)}}}\right|+\left|\frac{v_0'(\beta_n)}{\sqrt{\varrho_0(\beta_n)}}\sqrt{\frac{\varrho_0(\beta_n)}
{{\varrho_0(y)}}}\right|\\
&\leq& 2M_0e^{\frac{K_1}{2}}, \quad\forall y\in(\alpha_n-1, \alpha_n)\cup(\beta_n, \beta_n+1).
\end{eqnarray*}
This together with (H1) yields
\begin{equation*}
  \left|\frac{G_{0n}'(y)}{\sqrt{\varrho_0(y)}}\right|
  \leq
   \pi \mu M_0 e^{\frac{K_1}{2}}+R\left(\sqrt{\varrho_0}|\vartheta_{0n}'|+K_1\sqrt{\varrho_0}\vartheta_{0n}\right),
\end{equation*}
for any $y\in(\alpha_n-1, \alpha_n)\cup(\beta_n, \beta_n+1)$. Due to this and that
$\frac{G_{0n}'}{\sqrt{\varrho_0}}=\frac{G_{0}'}{\sqrt{\varrho_0}}$
on $(\alpha_n, \beta_n)$, it follows from (\ref{GB-8}) that
\begin{eqnarray}
  \left\|\frac{G_{0n}'}{\sqrt{\varrho_0}}\right\|_{L^2(I_n)}&\leq&\left\|\frac{G_{0}'}{\sqrt{\varrho_0}}\right\|_2
  +2\mu\pi M_0 e^{K_1/2}+R\left(\|\sqrt{\varrho_0}\vartheta_{0n}'\|_{L^2(I_n)}+K_1\|\sqrt{\varrho_0}\vartheta_{0n}\|_{L^2(I_n)}\right)
  \nonumber\\
  &\leq&\left\|\frac{G_{0}'}{\sqrt{\varrho_0}}\right\|_2
  + C\left(
  \|\sqrt{\varrho_0}\vartheta_{0}'\|_2+\|\sqrt{\varrho_0}\vartheta_{0}\|_2+1\right),\label{GB-14}
\end{eqnarray}
for a positive constant $C$ independent of $n$.

\textbf{Step 2. Solutions to the system in $I_n\times(0,\infty)$ and a priori estimates.}

For each positive integer $n$, let $(v_{0n}, \vartheta_{0n})$ be the initial data constructed as before.
Consider the initial-boundary value problem to the system (\ref{EqJ})--(\ref{Eqtheta})
in $(\alpha_n-1, \beta_n+1)\times(0,\infty)$, subject to
\begin{align}
  (J, v, \vartheta)|_{t=0}=(1, v_{0n}, \vartheta_{0n}),\quad
  (v_y, \vartheta)|_{y=a_n-1, \beta_n+1}=(0, 0).\label{IBC'}
\end{align}
Thanks to (\ref{GB-2}) and (\ref{GB-3}), and noticing that $\inf_{y\in I_n}\varrho_0>0$,
one can verify that the initial datum $(v_{0n}, \vartheta_{0n})$ satisfies all the assumptions
in Proposition \ref{PROPGLOBAL}, for each fixed $n$. Thus, there is a unique global strong solution $(J_n, v_n, \vartheta_n)$ to (\ref{EqJ})--(\ref{Eqtheta}) with (\ref{IBC'}).
Moreover, due to (\ref{GB-4}), (\ref{GB-6})--(\ref{GB-11}), and (\ref{GB-14}), it follows from Corollary \ref{CorAPRI}
that
\begin{align}
&\inf_{I_n\times(0,T)}J_n\geq\underline C_T, \quad\vartheta_n(y,t)\geq0,\label{AP1}\\
&\sup_{0\leq t\leq T} \left\|\left(\frac{\partial_yJ_n}{\sqrt{\varrho_0}}, \partial_y^2J_n, \partial_tJ_n, \partial_{yt}J_n\right)\right\|_{L^2(I_n)}^2 \leq C_T,  \\
&\sup_{0\leq t\leq T} \left\|\left(\sqrt{\varrho_0}v_n,\sqrt{\varrho_0}v_n^2, \partial_yv_n, \frac{\partial_y^2v_n}{\sqrt{\varrho_0}},
\sqrt{\varrho_0}\partial_tv_n\right)\right\|_{L^2(I_n)}^2\nonumber\\
&\quad\quad\quad+\int_0^T\|(\partial_y^3v_n, \partial_{yt}^2v_n)\|_{L^2(I_n)}^2 dt\leq C_T,
 \label{AP3}\\
&\sup_{0\leq t\leq T}\left(\|\varrho_0\vartheta_n\|_{L^1(I_n)}+ \left\|\left(\sqrt{\varrho_0}\vartheta_n, \sqrt{\varrho_0}\partial_y\vartheta_n, \sqrt{\varrho_0}\partial_y^2\vartheta_n, \varrho_0^\frac32\partial_t\vartheta_n\right)\right\|_{L^2(I_n)}^2\right)\nonumber\\
&\quad\quad\quad+\int_0^T\left(\|\partial_y\vartheta_n\|_{H^2(I_n)}^2+\|(\varrho_0\partial_t\vartheta_n,
  \varrho_0\partial_{yt}\vartheta_n)\|_{L^2(I_n)}^2\right)dt \leq C_T, \label{AP4}
\end{align}
and
\begin{align}
&  \sup_{0\leq t\leq T} \left\|\frac{\partial_yG_n}{\sqrt{\varrho_0}}\right\|_{L^2(I_n)}^2
+\int_0^T\left(\left\|\left(\frac{\partial_yG_n}{\varrho_0}
\right)_y\right\|_{L^2(I_n)}^2+
  \left\|\partial_tG_n\right\|_{L^2(I_n)}^2\right)dt\leq C_T, \label{AP5}
\end{align}
for any positive time $T$, where $G_n=\mu\frac{\partial_yv_n}{J_n}-R\frac{\varrho_0}{J_n}\vartheta_n$, and
$\underline C_T$ and $C_T$ are positive constants independent of $n$.

\textbf{Step 3. Convergence and existence. }

Thanks to the a priori estimates (\ref{AP1})--(\ref{AP5}) and $\inf_{(-k, k)}\varrho_0(y)>0$ for any $k\in\mathbb N$, the following estimate holds
\begin{align*}
  \|(J_n, v_n, \vartheta_n)&\|_{L^\infty(0,T; H^2((-k, k)))}+ \|(v_n, \vartheta_n)\|_{L^2(0,T; H^3((-k, k)))}+\|\partial_tJ_n\|_{L^\infty(0,T;
  H^1((-k,k)))}\\
  &+\|(\partial_tv_n, \partial_t\vartheta_n)\|_{L^\infty(0,T; L^2((-k, k)))\cap L^2(0,T; H^1((-k, k)))}\leq C_{k,T},\quad\forall k\in\mathbb N,
\end{align*}
for a positive constant $C_{k,T}$ independent of $n$. Due to this and the Cantor's diagonal
argument, there is a subsequence, still denoted by $(J_n, v_n, \vartheta_n)$, and $(J, v, \vartheta)$, such that
\begin{align}
&  (J_n, v_n, \vartheta_n)\overset{*}{\rightharpoonup}(J, v, \vartheta),\quad\mbox{ in }L^\infty(0,T; H^2((-k, k))),
\label{COV-1}\\
&  (v_n, \vartheta_n) \rightharpoonup (v, \vartheta),\quad\mbox{ in }L^2(0,T; H^3((-k, k))),\label{COV-2}\\
&  \partial_tJ_n\overset{*}{\rightharpoonup}J_t,\quad \mbox{ in }L^\infty(0,T;  H^1((-k,k))), \label{COV-3}\\
&  (\partial_tv_n, \partial_t\vartheta_n)\overset{*}{\rightharpoonup}(v_t, \vartheta_t),\quad\mbox{ in }L^\infty(0,T; L^2((-k, k))),\label{COV-4}\\
&  (\partial_tv_n, \partial_t\vartheta_n) \rightharpoonup (v_t, \vartheta_t),\quad\mbox{ in }L^2(0,T; H^1((-k, k))),\label{COV-5}
\end{align}
for any $k\in\mathbb N$.
Moreover, since $H^3((-k, k))\hookrightarrow\hookrightarrow C^2([-k,k])$ and $H^2((-k,k))\hookrightarrow\hookrightarrow C^1([-k,k])$, it follows
from the Aubin--Lions lemma that
\begin{align}
&  (J_n, v_n, \vartheta_n) \rightarrow (J, v, \vartheta),\quad\mbox{ in }C([0,T]; C^1([-k, k])),\label{COV-6}\\
&  (v_n, \vartheta_n) \rightarrow (v, \vartheta),\quad\mbox{ in }L^2(0,T; C^2([-k, k])),\label{COV-7}
\end{align}
for any $k\in\mathbb N$.
Thanks to these and by (\ref{AP1}), one has
\begin{equation}
  \inf_{(y,t)\in\mathbb R\times(0,T)}J(y,t)\geq\underline C_T, \quad \frac1{J_n}\rightarrow\frac1J\mbox{ in }C([0,T]; C^1([-k,k])),
  \label{COV-8}
\end{equation}
for any $k\in\mathbb N$.

Thanks to (\ref{COV-1})--(\ref{COV-8}) and noticing that $(v_{0n}, \vartheta_{0n})\rightarrow(v_0, \vartheta_0)$
in $H^2((-L, L))$ for any $L>0$,
one can take the limit as $n\rightarrow\infty$ to show that $(J, v, \vartheta)$ is a solution to the Cauchy
problem to the system (\ref{EqJ})--(\ref{Eqtheta}) subject to $(J, v, \vartheta)|_{t=0}=(1, v_0, \vartheta_0)$. The desired
regularities of $(J, v, \vartheta)$ stated in Theorem \ref{THMGLOBAL} follow
from the a priori estimates (\ref{AP1})--(\ref{AP5}) and convergence (\ref{COV-1})--(\ref{COV-7}) by the weakly lower
semi-continuity of norms. This proves Theorem \ref{THMGLOBAL}.
\end{proof}

\section{A Hopf type lemma and unboundedness of the entropy}\label{SECUNBDDENTROPY}

In this section, we prove the unboundedness of the entropy immediately after the initial time, i.e.\,Theorem \ref{UNBDDENTROPY}. As stated in
the Introduction, this is
based on some suitable scaling transform and a Hopf type lemma for a class of general linear degenerate equations.
So, we first establish a Hopf type lemma in the first subsection and then
present the proof of Theorem \ref{UNBDDENTROPY} in the second subsection.
The Hopf type lemma has its own independent interests and will also be applied to prove the uniform positivity of the
temperature in the next section.

\subsection{A Hopf type lemma}
Since the results in this subsection hold in any dimension, we use the following notations. Denote by $x=(x_1, x_2, \cdots, x_n)$ and $t$ the spatial and time variables respectively and $P=(x,t)$ a point in $\mathbb R^{n+1}$. For $P_0=(x_0, t_0)\in\mathbb R^{n+1}$ and $r>0$, denote
$$
\mathcal B_r(P_0):=\left\{(x,t)\in\mathbb R^{n+1}\Big||x-x_0|^2+(t-t_0)^2<r^2\right\}.
$$

Let $(a_{ij})_{n\times n}, a_0, b=(b_1, b_2,\cdots,b_n)$, and $c$ be given functions satisfying suitable properties to be specified later. Consider the operator
$$
\mathscr L\varphi=-a_{ij}\partial_{ij}\varphi+a_0\partial_t\varphi+b\cdot\nabla\varphi+c\varphi.
$$
Note that here $a_0$ is not required to have fixed sign and this linear operator can be regarded only as a linear degenerate elliptic operator in the space and time variables with degeneracy occurring in the time direction.

\begin{lemma}
  \label{lemma1}
Let $\mathcal O$ be a domain in $\mathbb R^{n+1}$. Assume that $a_{ij}, a_0, b,$ and $c$ are finitely valued functions in $\mathcal O$ with $c\geq0$, and the matrix $(a_{ij})_{n\times n}$ is nonnegative definite in $\mathcal O$. Then, for any
$\varphi\in C^{2,1}(\mathcal O)\cap C(\overline{\mathcal O})$, satisfying
\begin{equation*}
  \mathscr L\varphi>0 \mbox{ in }\mathcal O,\quad\mbox{and}\quad
  \varphi|_{\partial\mathcal O}\geq0,
\end{equation*}
it holds that $\varphi>0$ in $\mathcal O$. Here $C^{2,1}(\mathcal O)$ denotes the space of all function $f$ satisfying $f, \partial_tf, \nabla f, \nabla^2f\in C(\mathcal O)$.
\end{lemma}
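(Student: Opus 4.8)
The plan is to argue by contradiction, exploiting the elementary fact that a $C^{2,1}$ function cannot satisfy a strict differential inequality $\mathscr L\varphi>0$ of the stated type at an interior space-time minimum; this is the degenerate-parabolic analogue of the classical weak minimum principle, and the point is that the degeneracy in the time direction (i.e.\ no sign on $a_0$, no second-order term in $t$) is harmless at such a point.

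Suppose the conclusion is false, so that $\varphi(\bar P)\le 0$ for some $\bar P\in\mathcal O$. First I would produce an interior minimizer of $\varphi$. When $\overline{\mathcal O}$ is compact — the only situation needed, since in the applications Lemma~\ref{lemma1} is invoked on small balls — $\varphi$ attains its minimum $m:=\min_{\overline{\mathcal O}}\varphi\le\varphi(\bar P)\le 0$. If $m<0$, then by $\varphi|_{\partial\mathcal O}\ge 0$ the minimum cannot be attained on $\partial\mathcal O$, so it is attained at some $P_0\in\mathcal O$; if $m=0$, then $\varphi(\bar P)=0$ and one may take $P_0=\bar P\in\mathcal O$. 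In either case $P_0=(x_0,t_0)$ is an \emph{interior} point with $\varphi(P_0)=m\le 0$ and $\varphi\ge\varphi(P_0)$ throughout $\mathcal O$.

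Next I would read off the derivative information at $P_0$. Since $x\mapsto\varphi(x,t_0)$ has an interior minimum at $x_0$ and $\varphi\in C^{2,1}(\mathcal O)$, one gets $\nabla\varphi(P_0)=0$ and the spatial Hessian $(\partial_{ij}\varphi(P_0))$ positive semidefinite; since $t\mapsto\varphi(x_0,t)$ has an interior minimum at $t_0$ and is $C^1$ in $t$, one gets $\partial_t\varphi(P_0)=0$. Substituting into $\mathscr L$, the first-order and time-derivative terms vanish — this is exactly why neither the sign of $a_0$ nor the magnitude of $b$ plays any role, and why only finite-valuedness of the coefficients (rather than uniform boundedness) is ever used — leaving
$$
\mathscr L\varphi(P_0)=-a_{ij}(P_0)\,\partial_{ij}\varphi(P_0)+c(P_0)\,\varphi(P_0).
$$
Because $(a_{ij}(P_0))$ and $(\partial_{ij}\varphi(P_0))$ are both positive semidefinite, $a_{ij}(P_0)\partial_{ij}\varphi(P_0)=\operatorname{tr}\!\big((a_{ij}(P_0))(\partial_{ij}\varphi(P_0))\big)\ge 0$; note that only this one-sided inequality is needed, so no uniform ellipticity is required. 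Moreover $c(P_0)\ge 0$ together with $\varphi(P_0)\le 0$ gives $c(P_0)\varphi(P_0)\le 0$. Hence $\mathscr L\varphi(P_0)\le 0$, contradicting $\mathscr L\varphi>0$ in $\mathcal O$, and therefore $\varphi>0$ in $\mathcal O$.

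I do not anticipate a genuine obstacle: the entire content is the one-line computation at $P_0$ above. The only place demanding a little care is ensuring that the minimum is attained at an interior point — immediate for bounded $\mathcal O$; for an unbounded $\mathcal O$ one would need a mild boundedness or growth hypothesis on $\varphi$ to secure the minimizer, which does not affect any of the applications in the sequel.
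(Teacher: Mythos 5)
Your argument is correct and is essentially the same as the paper's: both locate an interior minimizer $P_0$ of $\varphi$, use that $\partial_t\varphi(P_0)=0$, $\nabla\varphi(P_0)=0$, and $\nabla^2\varphi(P_0)\succeq 0$, and derive $(\mathscr L\varphi)(P_0)\le 0$ from the sign conditions on $(a_{ij})$ and $c$, contradicting $\mathscr L\varphi>0$. The only cosmetic difference is that the paper splits into two steps (first $\varphi\ge 0$, then $\varphi>0$) while you handle both cases at once; you also correctly flag the implicit compactness of $\overline{\mathcal O}$ needed to guarantee the minimum is attained, which the paper leaves tacit.
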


\begin{proof}
First, we claim that $\varphi\geq0$ in $\mathcal O$. Otherwise, since $\varphi\geq0$ on $\partial\mathcal O$ and $\varphi\in
C(\overline{\mathcal O})$, there is $P_0\in\mathcal O$, such that $\varphi(P_0)=\min_{\overline{\mathcal
O}}\varphi<0.$ Since $\varphi\in C^{2,1}(\mathcal O)$, it is clear that $\partial_t\varphi(P_0)=\nabla \varphi(P_0)=0$
and $\nabla^2\varphi(P_0)$ is nonnegative definite. As a result
$$
(\mathscr L\varphi)(P_0)=-a_{ij}(P_0)\partial_{ij}\varphi(P_0)+c(P_0)\varphi(P_0)\leq0,
$$
which contradicts to the assumption. Therefore, the claim holds.
Next, we show that $\varphi>0$ in $\mathcal O$. Otherwise, there is $P_0^*\in\mathcal O$, such that $\varphi(P_0^*)=0$. Then, $\varphi(P_0^*)=\min_{\overline{\mathcal O}}\varphi=0,$ from which, similar as
before, one has $(\mathscr L\varphi)(P_0^*)\leq0$, contradicting to the assumption. Thus, $\varphi>0$ in $\mathcal O$, which proves the conclusion.
\end{proof}

\begin{lemma}[Hopf type lemma]
  \label{LEMHOPF}
Given $P_0=(x_0,t_0)$, $r>0$, $P_*=(x_*,t_*)\in\partial\mathcal B_r(P_0)$, $x_*\not=x_0$, and set $P_0^*=(x_0^*,t_0^*)$, with $x_0^*=\frac{x_0+x_*}{2}$ and $t_0^*=\frac{t_0+t_*}{2}$. Assume that there are positive constants $\lambda, \Lambda, \delta_*,$ and $C_*$, with $\delta_*<\frac{|x_0-x_*|}{4}$, such that
\begin{equation*}
  \left\{
  \begin{array}{l}
  \lambda|\xi|^2\leq a_{ij}(x,t)\xi_i\xi_j\leq\Lambda|\xi|^2,\quad\forall \xi\in\mathbb R^n,\\
  (t-t_0^*)a_0(x,t)+(x-x_0^*)\cdot b(x,t)\geq-C_*,\\
  0\leq c(x,t)\sqrt{|x-x_*|^2+(t-t_*)^2}\leq C_*,
  \end{array}
  \right.\quad\forall (x,t)\in\mathcal B_{\frac r2}(P_0^*)\cap\mathcal B_{\delta_*}(P_*).
\end{equation*}
Let $\varphi\in C^{2,1}(\mathcal B_r(P_0))\cap C(\overline{\mathcal B_r}(P_0))$ satisfy
$$
\mathscr L\varphi\geq0,\quad \varphi>\varphi(P_*),\quad \mbox{ in }\mathcal B_\frac{r}{2}(P_0^*)\cap \mathcal B_{\delta_*}(P_*),  \quad
\varphi(P_*)\leq0.
$$
Then, it holds that
$$
\varlimsup_{\ell\rightarrow 0^+}\frac{\varphi(P_*)-\varphi(P_*-\ell n_*)}{\ell}<0,
$$
where $n_*=\frac{P_*-P_0}r$ is the unit outward normal vector to $\partial\mathcal B_r(P_0)$ at $P_*$.
\end{lemma}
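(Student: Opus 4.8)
The plan is to prove this Hopf-type lemma by constructing an explicit barrier (comparison) function on a suitable small region touching $P_*$, in the spirit of the classical Hopf lemma but adapted to the degenerate operator $\mathscr L$. First I would set up the geometry: work in the lens-shaped region $D:=\mathcal B_{r/2}(P_0^*)\cap\mathcal B_{\delta_*}(P_*)$, whose boundary splits into the inner piece lying on $\partial\mathcal B_{r/2}(P_0^*)$ (which is interior to $\mathcal B_r(P_0)$, so $\varphi$ is strictly larger than $\varphi(P_*)$ there, hence $\varphi-\varphi(P_*)$ has a positive minimum $m>0$ by continuity and compactness) and the outer piece lying on $\partial\mathcal B_{\delta_*}(P_*)$, on which I only know $\varphi-\varphi(P_*)\geq 0$. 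The point $P_*$ itself sits on the outer sphere, and $n_*$ points out of $\mathcal B_r(P_0)$ at $P_*$; I want a lower barrier for $\varphi-\varphi(P_*)$ that vanishes at $P_*$ with a strictly negative inward radial derivative.

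Next I would choose the barrier. Following Hopf, set $w(P):=e^{-\alpha\varrho^2}-e^{-\alpha (r/2)^2}$ where $\varrho=\varrho(P):=\sqrt{|x-x_0^*|^2+(t-t_0^*)^2}$ is the distance to the center $P_0^*$ of the inner ball, and $\alpha>0$ is a large parameter to be fixed. Then $w>0$ inside $\mathcal B_{r/2}(P_0^*)$, $w=0$ on $\partial\mathcal B_{r/2}(P_0^*)$, and crucially $w>0$ at $P_*$ with $\partial w/\partial n_* = -\alpha|x_*-x_0^*|e^{-\alpha(r/2)^2}\cdot(\text{something})<0$ since $x_*\neq x_0$ (one checks the radial direction from $P_0^*$ has a nonzero spatial component, using $\delta_*<|x_0-x_*|/4$ and $x_0^*=(x_0+x_*)/2$). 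The computation of $\mathscr L w$ is where the structural hypotheses enter: differentiating, $\partial_{ij}w$ produces a term $4\alpha^2 e^{-\alpha\varrho^2}(x_i-x_{0,i}^*)(x_j-x_{0,j}^*)$ contracted against $a_{ij}$, which by uniform ellipticity is $\geq 4\alpha^2\lambda|x-x_0^*|^2 e^{-\alpha\varrho^2}$, dominating the lower-order terms for large $\alpha$; the first-order part $b\cdot\nabla w = -2\alpha e^{-\alpha\varrho^2} b\cdot(x-x_0^*)$ and the time part $a_0\partial_t w=-2\alpha e^{-\alpha\varrho^2}a_0(t-t_0^*)$ combine into $-2\alpha e^{-\alpha\varrho^2}\big[(t-t_0^*)a_0+(x-x_0^*)\cdot b\big]\geq -2\alpha e^{-\alpha\varrho^2}C_*$ exactly by the second hypothesis; the trace term $-a_{ii}$ contributes $\geq -2\alpha n\Lambda e^{-\alpha\varrho^2}$; and $cw$ is controlled using $0\leq c\varrho(P_*,\cdot)\leq C_*$ together with $0<w\leq 1$. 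Collecting, $\mathscr L w \geq \alpha e^{-\alpha\varrho^2}\big(4\alpha\lambda|x-x_0^*|^2 - 2n\Lambda - 2C_* - C_*/\alpha\big)$ on the annular part $\{\varrho\geq \varrho_0\}\cap D$ where $|x-x_0^*|$ is bounded below (this is where $\delta_*<|x_0-x_*|/4$ guarantees $\varrho$ stays away from $0$ on $D$, since $D$ lies near $P_*$ which is at distance $\approx|x_*-x_0^*|/2$ from $P_0^*$); choosing $\alpha$ large makes $\mathscr L w>0$ there.

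Then I would run the comparison argument. Let $v:=\varphi-\varphi(P_*)-\epsilon w$ for small $\epsilon>0$. On the inner boundary piece, $\varphi-\varphi(P_*)\geq m>0$ while $w\leq 1$, so $v\geq m-\epsilon>0$ for $\epsilon<m$; on the outer boundary piece, $w=$ something but we only need $w\geq 0$ there combined with $\varphi-\varphi(P_*)\geq 0$ — wait, $w$ need not vanish on $\partial\mathcal B_{\delta_*}(P_*)$, so instead I would shrink the barrier's support: redefine $D$ to be the smaller lens and note $w>0$ there is fine because on the outer piece $\varphi-\varphi(P_*)\geq 0 = $ the value we want; more carefully, the standard fix is that on the portion of $\partial D$ lying on $\partial\mathcal B_{\delta_*}(P_*)\setminus\{P_*\}$ one has $\varphi-\varphi(P_*)>0$ strictly except possibly at $P_*$, but since we cannot assume that, we instead only use $w\le e^{-\alpha\varrho^2}\le 1$ and arrange via the geometry that $\partial D\cap\partial\mathcal B_{\delta_*}(P_*)$ is where $\varrho$ is largest so $w$ is smallest; the cleanest route, which I would adopt, is to replace $\mathcal B_{\delta_*}(P_*)$ by a tangent ball construction so that $w\le 0$ on the outer portion — i.e. take the barrier relative to the ball $\mathcal B_{r/2}(P_0^*)$ and the region $D=\mathcal B_{r/2}(P_0^*)\cap\mathcal B_{\delta_*}(P_*)$; then on $\partial\mathcal B_{\delta_*}(P_*)\cap \mathcal B_{r/2}(P_0^*)$, $w>0$, so I genuinely need $\varphi-\varphi(P_*)\ge \epsilon w$ there, which holds if $\epsilon$ is small and $\varphi-\varphi(P_*)$ is bounded below by a positive constant on that set. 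The resolution is that the hypothesis $\varphi>\varphi(P_*)$ on $D$ is strict, so on the compact set $\partial\mathcal B_{\delta_*}(P_*)\cap\overline{\mathcal B_{r/2-\eta}(P_0^*)}$ it has a positive min; handle the thin collar near $\partial\mathcal B_{r/2}(P_0^*)$ by the first boundary piece's estimate $v\ge m-\epsilon$. With $v\ge 0$ on $\partial D$, $\mathscr L v = \mathscr L\varphi - \epsilon\mathscr L w < 0$ in $D$ (using $\mathscr L\varphi\ge 0$ and $\mathscr L w>0$), so by Lemma \ref{lemma1} applied to $-v$ — or rather its contrapositive, since the minimum principle here says a function with $\mathscr L v<0$ cannot attain a nonpositive interior minimum — we get $v\ge 0$ in $D$, hence $v(P_*)=0$ is the minimum and $\partial v/\partial n_*\le 0$ at $P_*$, giving $\partial\varphi/\partial n_*\le \epsilon\,\partial w/\partial n_* <0$, i.e. $\varlimsup_{\ell\to 0^+}\frac{\varphi(P_*)-\varphi(P_*-\ell n_*)}{\ell}\le \epsilon\,\partial w/\partial n_*<0$, which is the claim.

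The main obstacle I anticipate is the bookkeeping around the boundary of the lens region $D$: ensuring the barrier $w$ is nonnegative-dominated by $\varphi-\varphi(P_*)$ on every part of $\partial D$ while simultaneously $\mathscr L w>0$ throughout $D$ requires the geometric constraint $\delta_*<|x_0-x_*|/4$ (so that $\varrho$ is bounded away from zero on $D$, keeping the good quadratic term $4\alpha\lambda|x-x_0^*|^2$ effective) together with the observation that the only boundary point where $\varphi-\varphi(P_*)$ might fail to be strictly positive is $P_*$ itself, which is exactly where $w$ attains its relevant value and where we want the derivative conclusion — so the comparison is "tight" only at $P_*$. Getting these two spheres' relative position right, and verifying the first-order hypothesis $(t-t_0^*)a_0+(x-x_0^*)\cdot b\ge -C_*$ is invoked at precisely the center $P_0^*$ of the inner ball (hence the definition of $P_0^*$ as the midpoint), is the delicate part; the rest is the classical Hopf computation.
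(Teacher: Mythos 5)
Your overall strategy coincides with the paper's: you use the same exponential barrier $w=e^{-\alpha|P-P_0^*|^2}-e^{-\alpha r^2/4}$ on the same lens region $\mathscr D=\mathcal B_{r/2}(P_0^*)\cap\mathcal B_{\delta_*}(P_*)$, and you run the same comparison argument via Lemma~\ref{lemma1}, so there is no new route here. However there are two concrete errors in the details that you should correct.

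First, the signs. With $\mathscr L=-a_{ij}\partial_{ij}+a_0\partial_t+b\cdot\nabla+c$ the dominant term in $\mathscr Lw$ is $-4\alpha^2(x-x_0^*)^TA(x-x_0^*)e^{-\alpha\varrho^2}$, which is \emph{negative}. The paper accordingly factors out a minus sign, writes $\mathscr L\phi=-e^{-\zeta\varrho^2}[\,\cdots\,]$, shows the bracket is positive, and concludes $\mathscr L\phi<0$. You instead claim $\mathscr Lw\geq\alpha e^{-\alpha\varrho^2}(4\alpha\lambda|x-x_0^*|^2-\cdots)>0$, i.e.\ the opposite sign. This error then propagates: you write $\mathscr Lv<0$ and ``apply Lemma~\ref{lemma1} to $-v$,'' but Lemma~\ref{lemma1} applied to $-v$ with $\mathscr L(-v)>0$ and $-v\geq0$ on the boundary would give $v<0$ inside, not $v\geq0$. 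The correct chain is: $\mathscr Lw<0$ makes $v:=\varphi-\varphi(P_*)-\varepsilon w$ satisfy $\mathscr Lv=\mathscr L\varphi-c\varphi(P_*)-\varepsilon\mathscr Lw>0$ (using $c\geq0$, $\varphi(P_*)\leq0$), and Lemma~\ref{lemma1} applied directly to $v$ gives $v>0$ in $\mathscr D$.

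Second, the zeroth-order term. You say that $cw$ is ``controlled using $0\leq c\,|P-P_*|\leq C_*$ together with $0<w\leq1$,'' and your collected estimate shows a contribution of size $C_*/\alpha$. Neither step is valid: $c$ can blow up like $C_*/|P-P_*|$ as $P\to P_*$, and bounding $w$ by $1$ leaves $cw$ unbounded. The paper handles this with the mean-value theorem: since $P_*\in\partial\mathcal B_{r/2}(P_0^*)$, one has $\bigl|e^{\zeta(|P-P_0^*|^2-r^2/4)}-1\bigr|\leq r\zeta\,|P-P_*|$ on $\mathscr D$, so the $c$-term inside the bracket is bounded by $C_*r\zeta$ --- of order $\zeta$, not $\zeta^{-1}$, but still dominated by the order-$\zeta^2$ ellipticity term, because $|x-x_0^*|\geq|x_0-x_*|/4$ on $\mathscr D$ thanks to $\delta_*<|x_0-x_*|/4$. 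The cancellation of $w$ against $|P-P_*|$ at $P_*$ is exactly the reason the barrier is centered at the midpoint $P_0^*$ and the degeneracy of $c$ is tolerable; without this observation the estimate fails.

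These are local miscalculations rather than a flaw in the overall plan, but as written the barrier computation does not close: you would need to correct the sign of $\mathscr Lw$, replace the ``$w\leq1$'' bound by the mean-value estimate $|w|\leq r\alpha|P-P_*|e^{-\alpha\varrho^2}$, and then apply Lemma~\ref{lemma1} directly to $v$ (not $-v$) to obtain $v>0$ in $\mathscr D$, from which the normal-derivative conclusion at $P_*$ follows exactly as you outlined.
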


\begin{proof}
Set
$$
\mathscr D=\mathcal B_{\frac r2}(P_0^*)\cap\mathcal B_{\delta_*}(P_*).
$$
  It suffices to consider the case that $\varphi(P_*)=0$. Otherwise, one may consider $\Phi:=\varphi-\varphi(P_*)$, which reduces to the case considered, due to
  $$
  \mathscr L\Phi=\mathscr L\varphi-\mathscr L(\varphi(P_*))=\mathscr L\varphi-c\varphi(P_*)\geq\mathscr L\varphi\geq0 \quad\mbox{in }\mathscr D,
  $$
  as $\varphi(P_*)\leq0$ and $c\geq0$ in $\mathscr D$. It is clear that $\mathcal B_\frac r2(P_0^*)\subset\mathcal B_r(P_0)$. By assumption, it holds that
  \begin{equation}
    \label{ADD1224}
    \varphi(P)>\varphi(P_*)=0,\quad\forall P\in\overline{\mathscr D}\setminus\{P_*\}.
  \end{equation}
  Define
  $$
  \phi(x,t)=e^{-\zeta(|x-x_0^*|^2+(t-t_0^*)^2)}-e^{-\frac{r^2}{4}\zeta}=e^{-\zeta|P-P_0^*|^2}-e^{-\frac{r^2}{4}\zeta},
  $$
  where $\zeta>0$ is a constant to be determined.
  Then, it follows from direct calculations that
  \begin{align}
    \mathscr L\phi
    =&-e^{-\zeta|P-P_0^*|^2}\Big[4(x-x_0^*)^TA(x-x_0^*)\zeta^2-2tr A\zeta\nonumber\\
    &+2((t-t_0^*)a_0+(x-x_0^*)\cdot b)\zeta+c\left(e^{\zeta(|P-P_0^*|^2-\frac{r^2}{4})}-1\right)\Big],\label{HF-1}
  \end{align}
  where $A=(a_{ij})_{n\times n}$ and $tr A=a_{ii}$. Note that the assumptions imply
  \begin{align}
    &4(x-x_0^*)^TA(x-x_0^*)\zeta^2-2tr A\zeta+2((t-t_0^*)a_0+(x-x_0^*)\cdot b)\zeta\nonumber\\
    \geq&4\lambda|x-x_0^*|^2\zeta^2-2n\Lambda\zeta-2C_*\zeta\geq\frac\lambda4|x_0-x_*|^2\zeta^2-(2n\Lambda+2C_*)\zeta, \label{HF-1-1}
  \end{align}
  for any $(x,t)\in\mathscr D$, due to $tr A\leq n\Lambda$ and
  $$
  |x-x_0^*|\geq|x_0^*-x_*|-|x_*-x|\geq\frac{|x_0-x_*|}{2}-\delta_*\geq\frac{|x_0-x_*|}{4},\quad\forall(x,t)\in\mathscr D.
  $$
  Note that $|P-P_0^*|<\frac r2$ for any $P\in\mathscr D$. It follows from the mean value theorem and the triangular inequality that
  \begin{align*}
    &\left|e^{\zeta(|P-P_0^*|^2-\frac{r^2}{4})}-1\right|=e^{\tau\zeta (|P-P_0^*|^2-\frac{r^2}{4})}\left||P-P_0^*|^2-\frac{r^2}{4}\right|\zeta\nonumber\\
    \leq&\left||P-P_0^*|-\frac{r}{2}\right|\left||P-P_0^*|+\frac{r}{2}\right|\zeta\leq r\zeta\big||P-P_0^*|-|P_0^*-P_*|\big|
    \leq r\zeta|P-P_*|,
  \end{align*}
  for any $P\in\mathscr D$, where $\tau\in(0,1)$. This, together with the assumptions, yields
  \begin{equation}
    \label{HF-1-2}
    \left|c\left(e^{\zeta(|P-P_0^*|^2-\frac{r^2}{4})}-1\right)\right|\leq cr|P-P_*|\zeta\leq C_*r\zeta,\quad\forall P\in\mathscr D.
  \end{equation}
  Combining (\ref{HF-1-1}) with (\ref{HF-1-2}) leads to
  \begin{eqnarray}
    &&4(x-x_0^*)^TA(x-x_0^*)\zeta^2-2tr A\zeta
\nonumber\\
     &&+2((t-t_0^*)a_0+(x-x_0^*)\cdot b)\zeta+c\left(e^{\zeta(|P-P_0^*|^2-\frac{r^2}{4})}-1\right)\nonumber\\
     &\geq&  \frac\lambda4|x_0-x_*|^2\zeta^2-(2n\Lambda+2C_*+rC_*)\zeta>0,\quad\forall P\in\mathscr D, \label{HF-2}
  \end{eqnarray}
  if $\zeta>\zeta_0:=\frac{8n\Lambda+8c_*+4rC_*}{\lambda|x_0-x_*|^2}$.
  Choose $\zeta=2\zeta_0$. Then, it follows from (\ref{HF-1}) and (\ref{HF-2}) that
  \begin{equation}
    \mathscr L\phi <0\quad\mbox{ in }\mathscr D.\label{HF-3}
  \end{equation}
  It follows from (\ref{ADD1224}) that
  \begin{eqnarray*}
  \varphi\geq0=\phi\mbox{ on }\partial\mathcal B_{\frac r2}(P_0^*)\cap\mathcal B_{\delta_*}(P_*),\quad   \inf_{\partial\mathcal B_{\delta_*}(P_*)\cap \mathcal B_{\frac r2}(P_0^*)}\varphi>0.
  \end{eqnarray*}
  Therefore, for $\varepsilon>0$ sufficiently small, it follows from the assumptions and (\ref{HF-3}) that
  \begin{equation*}
    \mathscr L\varphi\geq0>\mathscr L(\varepsilon\phi)\mbox{ in }\mathscr D, \quad \varphi\geq\varepsilon\phi\mbox{ on }\partial\mathscr D,
  \end{equation*}
  and thus
  \begin{equation*}
    \mathscr L(\varphi-\varepsilon\phi)>0\mbox{ in }\mathscr D, \quad \varphi-\varepsilon\phi\geq0\mbox{ on }\partial\mathscr D.
  \end{equation*}
  With the aid of this, noticing that $\varphi-\varepsilon\phi\in C^{2,1}(\mathscr D)\cap C(\overline{\mathscr D}),$ and applying Lemma \ref{lemma1}, one gets
  $$
  \varphi>\varepsilon\phi \quad\mbox{ in }\mathscr  D.
  $$
  Therefore, for $\ell>0$ sufficiently small, one has
  $$
  \varphi(P_*)-\varphi(P_*-\ell n_*)=-\varphi(P_*-\ell n_*)<-\varepsilon\phi(P_*-\ell n_*) =\varepsilon(\phi(P_*)-\phi(P_*-\ell n_*))
  $$
  and thus
  \begin{eqnarray*}
  \varlimsup_{\ell\rightarrow0^+}\frac{\varphi(P_*)-\varphi(P_*-\ell n_*)}{\ell}&\leq &\varepsilon\varlimsup_{\ell\rightarrow0^+} \frac{\phi(P_*)-\phi(P_*-\ell n_*)}{\ell} \\
  &=&\varepsilon\partial_{n_*}\phi(P_*)=-\varepsilon\zeta re^{-\frac{r^2}{4}\zeta}<0.
  \end{eqnarray*}
  This proves the conclusion.
\end{proof}

As a direct consequence of Lemma \ref{LEMHOPF}, the following corollary holds.

\begin{corollary}
  \label{CORHOPF}
Given $P_0=(x_0,t_0)$, $r>0$, $P_*=(x_*,t_*)\in\partial\mathcal B_r(P_0)$, $x_*\not=x_0$. Assume that $a_0, b, c\in L^\infty(\mathcal B_r(P_0))$, $c\geq0$ in $\mathcal B_r(P_0)$, and
\begin{equation*}
  \lambda|\xi|^2\leq a_{ij}(x,t)\xi_i\xi_j\leq\Lambda|\xi|^2,\quad\forall \xi\in\mathbb R^n, (x,t)\in\mathcal B_r(P_0),
\end{equation*}
for some positive constants $\lambda$ and  $\Lambda$.
Let $\varphi\in C^{2,1}(\mathcal B_r(P_0))\cap C(\overline{\mathcal B_r}(P_0))$ satisfy
$$
\mathscr L\varphi\geq0,\quad \varphi>\varphi(P_*),\quad \mbox{ in }\mathcal B_r(P_0),  \quad
\varphi(P_*)\leq0.
$$
Then, it holds that
$$
\varlimsup_{\ell\rightarrow 0^+}\frac{\varphi(P_*)-\varphi(P_*-\ell n_*)}{\ell}<0,
$$
where $n_*=\frac{P_*-P_0}r$ is the unit outward normal vector to $\partial\mathcal B_r(P_0)$ at $P_*$.
\end{corollary}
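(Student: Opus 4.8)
The plan is to deduce Corollary \ref{CORHOPF} directly from Lemma \ref{LEMHOPF} by checking that, under the present (stronger) hypotheses, all the structural assumptions required in that lemma are met on the relevant small region, for a suitable choice of the auxiliary constants $\delta_*$ and $C_*$. First I would set $P_0^*=\frac{P_0+P_*}{2}$ exactly as in Lemma \ref{LEMHOPF}, and record the elementary geometric facts that $|P_0^*-P_0|=\frac r2$ (since $P_*\in\partial\mathcal B_r(P_0)$), hence $\mathcal B_{\frac r2}(P_0^*)\subset\mathcal B_r(P_0)$ by the triangle inequality, and that $|P-P_*|<2r$ for every $P\in\mathcal B_r(P_0)$ while $|P-P_0^*|<\frac r2$ for every $P\in\mathcal B_{\frac r2}(P_0^*)$. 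Since $x_*\neq x_0$, one may then pick any $\delta_*\in\big(0,\tfrac{|x_0-x_*|}{4}\big)$.

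Next I would verify the three structural inequalities of Lemma \ref{LEMHOPF} on $\mathscr D:=\mathcal B_{\frac r2}(P_0^*)\cap\mathcal B_{\delta_*}(P_*)$. The two-sided ellipticity bound on $(a_{ij})$ is assumed verbatim. For the first-order term, using $|t-t_0^*|\le|P-P_0^*|<\frac r2$ and $|x-x_0^*|\le|P-P_0^*|<\frac r2$ together with $a_0,b\in L^\infty(\mathcal B_r(P_0))$, one obtains $(t-t_0^*)a_0(x,t)+(x-x_0^*)\cdot b(x,t)\ge-\tfrac r2\big(\|a_0\|_{L^\infty(\mathcal B_r(P_0))}+\|b\|_{L^\infty(\mathcal B_r(P_0))}\big)$ on $\mathscr D$. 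For the zeroth-order term, since $c\ge0$, $c\in L^\infty(\mathcal B_r(P_0))$, and $\sqrt{|x-x_*|^2+(t-t_*)^2}=|P-P_*|<2r$ on $\mathcal B_r(P_0)$, one has $0\le c(x,t)\sqrt{|x-x_*|^2+(t-t_*)^2}\le 2r\|c\|_{L^\infty(\mathcal B_r(P_0))}$. Taking $C_*:=\max\big\{\tfrac r2(\|a_0\|_{L^\infty(\mathcal B_r(P_0))}+\|b\|_{L^\infty(\mathcal B_r(P_0))}),\,2r\|c\|_{L^\infty(\mathcal B_r(P_0))}\big\}$ then satisfies all the requirements of Lemma \ref{LEMHOPF}.

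Finally I would check the hypotheses on $\varphi$: because $\mathscr D\subset\mathcal B_{\frac r2}(P_0^*)\subset\mathcal B_r(P_0)$, the assumptions $\mathscr L\varphi\ge0$ and $\varphi>\varphi(P_*)$ on $\mathcal B_r(P_0)$ restrict in particular to $\mathscr D$, while $\varphi(P_*)\le0$ and the regularity $\varphi\in C^{2,1}(\mathcal B_r(P_0))\cap C(\overline{\mathcal B_r}(P_0))$ are precisely as assumed. Thus Lemma \ref{LEMHOPF} applies with this $\delta_*$ and $C_*$ and yields $\varlimsup_{\ell\to0^+}\frac{\varphi(P_*)-\varphi(P_*-\ell n_*)}{\ell}<0$, which is the claim. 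I do not expect a genuine obstacle here, since the corollary is designed to be an immediate specialization; the only point requiring mild care is confirming the ball inclusions and that the drift and zeroth-order bounds can be taken uniform over $\mathscr D$, which is exactly what the $L^\infty$ hypotheses provide.
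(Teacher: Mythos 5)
Your proof is correct and is essentially the intended argument: the paper presents the corollary without a written proof, labeling it a ``direct consequence'' of Lemma \ref{LEMHOPF}, and your verification — picking any admissible $\delta_*\in(0,|x_0-x_*|/4)$, bounding the drift and zeroth-order terms by the $L^\infty$ norms on $\mathcal B_r(P_0)$ to obtain a uniform $C_*$, and checking the ball inclusions — is precisely the straightforward unwinding the authors have in mind. (The only pedantic point is that $C_*$ must be taken strictly positive, so in the degenerate case where your max vanishes one should add, say, $1$; this affects nothing.)
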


\subsection{Unboundedness of the entropy}

This subsection is devoted to proving Theorem \ref{UNBDDENTROPY}.
We start with the following embedding lemma, which is used
to verify the H\"older regularity of $J_y$ required in the proof of Theorem \ref{UNBDDENTROPY}.

\begin{lemma}
\label{Lem4.1}
Let $L>0$ be a positive number. Then, the following embedding inequality holds
$$
\|f\|_{C^{\frac12,\frac14}([-L,L]\times[0,T])}\leq C(\|f\|_{L^\infty(0,T; H^1((-L,L)))}+\|\partial_tf\|_{L^2(0,T; L^2((-L,L)))}),
$$
for any function $f\in L^\infty(0,T; H^1((-L,L)))$ such that $\partial_tf\in L^2(0,T; L^2((-L,L)))$, where $C$ is an
absolute positive constant.
\end{lemma}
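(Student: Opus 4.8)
The plan is to establish the spatial and temporal Hölder regularity separately: the spatial part follows from the one-dimensional Sobolev embedding, while the temporal part follows from a mixed space--time averaging argument that trades the $L^\infty_t H^1_x$ control of $f$ against the $L^2_tL^2_x$ control of $\partial_t f$. Throughout I work with the continuous-in-time representative of $f$: since $\partial_t f\in L^2(0,T;L^2((-L,L)))$, $f$ agrees a.e.\ with an element of $C([0,T];L^2((-L,L)))$, so $f(\cdot,t)$ is well defined for each $t$, and the uniform $L^\infty(0,T;H^1((-L,L)))$ bound (plus weak continuity) gives $f(\cdot,t)\in H^1((-L,L))$ for every $t$ with norm bounded by $\|f\|_{L^\infty(0,T;H^1)}$.

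For the spatial part, the embedding $H^1((-L,L))\hookrightarrow C^{1/2}([-L,L])$ gives, for each fixed $t$, the bound $\|f(\cdot,t)\|_{C^{1/2}([-L,L])}\le C\|f(\cdot,t)\|_{H^1((-L,L))}\le C\|f\|_{L^\infty(0,T;H^1((-L,L)))}$. Taking the supremum over $t\in[0,T]$ controls both $\|f\|_{L^\infty([-L,L]\times[0,T])}$ and the spatial Hölder seminorm $\sup_t[f(\cdot,t)]_{C^{1/2}([-L,L])}$.

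The temporal part is the heart of the matter. Fix $x\in[-L,L]$ and $0\le s<t\le T$, set $h=\min\{(t-s)^{1/2},\,2L\}$, and pick an interval $I\subset[-L,L]$ of length $h$ with $x\in\overline I$. Averaging over $x'\in I$ and inserting two telescoping terms,
\begin{align*}
f(x,t)-f(x,s)=\frac1h\int_I\Big[\big(f(x,t)-f(x',t)\big)+\big(f(x',t)-f(x',s)\big)+\big(f(x',s)-f(x,s)\big)\Big]\,dx'.
\end{align*}
The first and third terms are each bounded by $\sup_{x'\in I}\big(|f(x,\cdot)-f(x',\cdot)|\big)\le C\|f\|_{L^\infty(0,T;H^1)}h^{1/2}$ by the spatial estimate above. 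For the middle term I write $f(x',t)-f(x',s)=\int_s^t\partial_\tau f(x',\tau)\,d\tau$ and apply the Cauchy--Schwarz inequality in $(x',\tau)\in I\times(s,t)$:
\begin{align*}
\Big|\frac1h\int_I\int_s^t\partial_\tau f(x',\tau)\,d\tau\,dx'\Big|\le\frac1h\big(h(t-s)\big)^{1/2}\|\partial_tf\|_{L^2(0,T;L^2)}=h^{-1/2}(t-s)^{1/2}\|\partial_tf\|_{L^2(0,T;L^2)}.
\end{align*}
Hence $|f(x,t)-f(x,s)|\le C\|f\|_{L^\infty(0,T;H^1)}h^{1/2}+h^{-1/2}(t-s)^{1/2}\|\partial_tf\|_{L^2(0,T;L^2)}$; the choice $h=(t-s)^{1/2}$ balances the two terms when $(t-s)^{1/2}\le 2L$, while in the complementary regime one uses $h=2L$ together with $t-s\le T$, yielding in all cases $|f(x,t)-f(x,s)|\le C\big(\|f\|_{L^\infty(0,T;H^1)}+\|\partial_tf\|_{L^2(0,T;L^2)}\big)(t-s)^{1/4}$. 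Combining this with the spatial estimate via $|f(x,t)-f(x',s)|\le|f(x,t)-f(x',t)|+|f(x',t)-f(x',s)|$ gives the full $C^{1/2,1/4}$ bound. The only genuinely delicate point is this space--time interpolation in the middle term — keeping the averaging interval $I$ inside $[-L,L]$ and handling the case where $(t-s)^{1/2}$ exceeds the interval length — but it requires no idea beyond the averaging trick, so I expect the argument to be short and essentially routine.
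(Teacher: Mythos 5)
Your proof is correct, but it takes a genuinely different route from the paper's. The paper's argument is more compact: it applies the one-dimensional Gagliardo--Nirenberg interpolation (Lemma \ref{Lem2.1}) directly to the time-difference $g(\cdot) := f(\cdot,t)-f(\cdot,\tau)$, yielding $\|g\|_{L^\infty}\lesssim\|g\|_{L^2}^{1/2}\|g\|_{H^1}^{1/2}$; the $L^2$ factor is bounded by $\int_\tau^t\|\partial_tf\|_{L^2}\,ds\lesssim|t-\tau|^{1/2}\|\partial_tf\|_{L^2_tL^2_x}$, the $H^1$ factor by $2\|f\|_{L^\infty_tH^1_x}$, and Young's inequality then gives the $|t-\tau|^{1/4}$ modulus in one stroke, without introducing any auxiliary spatial scale. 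You instead use a Campanato-style averaging and telescoping over a spatial window of width $h\sim(t-s)^{1/2}$ and balance the two resulting terms; this is a perfectly valid and standard alternative, at the cost of having to treat the regime $(t-s)^{1/2}>2L$ separately. A small side remark applicable to both arguments: the claim that $C$ is ``absolute'' is slightly imprecise, since the GN/Sobolev constant on $(-L,L)$ degenerates as $L\to0$ (and your large-gap regime also introduces an $L$- and $T$-dependent factor); this is harmless for the application in the paper, where the qualitative membership $f\in C^{1/2,1/4}([-L,L]\times[0,T])$ is all that is used.
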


\begin{proof}
For any $t,\tau\in[0,T]$, one deduces by Lemma \ref{Lem2.1}, the Minkovski, H\"older, and Cauchy inequalities that
\begin{eqnarray*}
  &&\|f(\cdot,t)-f(\cdot,\tau)\|_{L^\infty((-L,L))}\\
  &\leq& C\|f(\cdot,t)-f(\cdot,\tau)\|_{L^2((-L,L))}^\frac12\|f(\cdot,t)-f(\cdot,\tau)\|_{H^1((-L,L))}^\frac12\\
  &\leq& C\|f\|_{L^\infty(0,T; H^1((-L,L)))}^\frac12\left\|\int_\tau^t\partial_tf(\cdot,s)ds\right\|_{L^2((-L,L))}^\frac12\\
  &\leq& C\|f\|_{L^\infty(0,T; H^1((-L,L)))}^\frac12\left(\int_\tau^t\|\partial_tf\|_{L^2((-L,L))}ds\right)^\frac12\\
  &\leq&C(\|f\|_{L^\infty(0,T; H^1((-L,L)))}+\|\partial_tf\|_{L^2(0,T; L^2((-L,L)))}) |t-\tau|^\frac14,
\end{eqnarray*}
for an absolute positive constant $C$. For any $x,y\in[-L,L]$ and $t\in[0,T]$, it follows from the H\"older inequality that
\begin{eqnarray*}
  &&|f(x,t)-f(y,t)|\leq\left|\int_x^y\partial_xf(z,t)dz\right|\\
  &\leq&\left|\int_{-L}^L|\partial_xf|^2dx\right|^\frac12|y-x|^\frac12\leq\|f\|_{L^\infty(0,T; H^1((-L,L)))}|y-x|^\frac12.
\end{eqnarray*}
Therefore, for any $x,y\in[-L,L]$ and $t,\tau\in[0,T]$, it holds that
\begin{eqnarray*}
  &&|f(x,t)-f(y,\tau)|\leq|f(x,t)-f(y,t)|+|f(y,t)-f(y,\tau)|\\
  &\leq &C(\|f\|_{L^\infty(0,T; H^1((-L,L)))}+\|\partial_tf\|_{L^2(0,T; L^2((-L,L)))})(|t-\tau|^\frac14+|y-x|^\frac12).
\end{eqnarray*}
This leads to the conclusion.
\end{proof}

We are now ready to prove Theorem \ref{UNBDDENTROPY}.

\begin{proof}[Proof of Theorem \ref{UNBDDENTROPY}]
The proof is dived into five steps as follows.

\textbf{Step 1. Regularities and pointwise positivity of $\vartheta$. }For $L>0$, denote by $W^{2,1}_2((-L, L)\times(0,T))$ the space
of all functions
$f\in L^2(0,T; H^2((-L,L)))$ satisfying
$\partial_tf\in L^2(0,T; L^2((-L,L)))$. Recall the embedding that $W^{2,1}_2([-L,L]\times(0,T))\hookrightarrow C^{\frac12,\frac14}([-L,L]\times[0,T])$ (Theorem 4.1 of \cite{WUYINWANG}).
Note that
$$(v_y, \vartheta_y)\in W^{2,1}_2((-L, L)\times(0,T)),
\quad J, J_y\in L^\infty(0,T; H^1((-L,L))),$$
and $J_t, J_{yt}\in L^\infty(0,T; L^2((-L,L)))$. Hence, it follows from the Sobolev embedding
and Lemma \ref{Lem4.1} that
\begin{equation}
  (J, J_y, v_y, \vartheta_y)\in C^{\frac12,\frac14}([-L,L]\times[0,T]),\quad\forall L> 0.\label{CONT-1}
\end{equation}

Rewrite (\ref{Eqtheta}) as
\begin{equation}
  c_v\varrho_0\vartheta_t-\frac\kappa J\vartheta_{yy}+\kappa\frac{J_y}{J^2}\vartheta_y+R\varrho_0\frac{v_y}{J}\vartheta=\frac\mu J|v_y|^2.
  \label{CON-EQ}
\end{equation}
Since $J$ is uniformly positive on $\mathbb R\times(0,T)$, it can be checked
that all the coefficients in (\ref{CON-EQ}), i.e.,
$\varrho_0, \frac1J, \frac{J_y}{J^2}, \varrho_0\frac{v_y}{J},$ and $\frac{|v_y|^2} J$, belong to
$C^{\frac12,\frac14}([-L,L]\times[0,T])$.
Thanks to these and the fact that $\varrho_0(y)>0$ for all $y\in\mathbb R$, it follows from the classic
Schauder theory on interior regularities for uniform parabolic equations
that $\vartheta\in C^{2+\frac12, 1+\frac14}((-L,L)\times(0,T))$. On the other hand,
by the embedding theorem, it follows from the regularities of
$\vartheta$ that $\vartheta\in C([-L,L]\times[0,T])$. Therefore, it holds that
$$
\vartheta\in C^{2, 1}((-L,L)\times(0,T))\cap C([-L,L]\times[0,T]).
$$
Note that $\vartheta\not\equiv0$ on $\mathbb R\times(0,T)$. Otherwise, noticing that $\vartheta\in
C([0,T]; L^2(-L,L))$ for any $L>0$, one has $\vartheta_0\equiv0$; furthermore, it follows from (\ref{Eqtheta}) that
$v_y\equiv0$ on $\mathbb R\times(0,T)$, from which, since $v\in C([0,T]; L^2((-L,L)))$ for
any $L>0$, one has $v_0\equiv\text{Const.}$ This contradicts to the assumptions.
Therefore, one has $\vartheta\not\equiv0$ on $\mathbb R\times(0,T)$ and $\vartheta\geq0$.
Thanks to this and by the strong maximum principle, one gets
\begin{equation}
\label{REGTHETA}
0<\vartheta\in C^{2, 1}(\mathbb R\times(0,T))\cap C(\mathbb R\times[0,T]).
\end{equation}

\textbf{Step 2. Asymptotic behavior of $J_y$.} Note that
$$
J_{yt}=\frac1\mu(GJ_y+JG_y+R\varrho_0'\vartheta+R\varrho_0\vartheta_y)
$$
which implies
$$
J_y=\frac1\mu\int_0^te^{\frac1\mu\int_s^tGd\tau}(JG_y+R\varrho_0'\vartheta+R\varrho_0\vartheta_y)ds.
$$
Therefore
\begin{equation}
  \left|\frac{J_y(y,t)}{\varrho_0(y)}\right|\leq\frac1\mu e^{\frac1\mu\int_0^t\|G\|_\infty d\tau}\int_0^t\left(J\left|\frac{G_y}{\varrho_0}\right|+R\left|\frac{\varrho_0'}{\varrho_0}\right|\vartheta
  +R|\vartheta_y|\right)ds.\label{JY-1}
\end{equation}

For any $y\geq0$, it follows that
\begin{eqnarray*}
  \left|\frac{G_y(y,t)}{\varrho_0(y)}\right|&=&\left|\int_0^1\frac{G_y}{\varrho_0}dz+\int_0^1\int_z^y\left(\frac{G_y}{\varrho_0}
  \right)_y(z',t)dz'dz\right|\\
  &\leq&\int_0^1\left|\frac{G_y}{\varrho_0}\right|dz+\int_0^{y+1}\left|\left(\frac{G_y}{\varrho_0}\right)_y\right|dz\\
  &\leq&\left\|\frac{G_y}{\sqrt{\varrho_0}}\right\|_2(t)+\sqrt {y+1}\left\|\left(\frac{G_y}{\varrho_0}\right)_y\right\|_2(t)
\end{eqnarray*}
and
\begin{eqnarray}
  \vartheta(y,t)&=&\int_0^1\vartheta(z,t)dz+\int_0^1\int_z^y\vartheta_y(z',t)dz'dz\nonumber\\
  &\leq&\int_0^1\frac{\sqrt{\varrho_0}\vartheta}{\sqrt{\varrho_0}}dz+\int_0^{y+1}|\vartheta_y|dz
  \leq\frac{\|\sqrt{\varrho_0}
  \vartheta\|_2(t)}{\sqrt{\delta_0}}+\sqrt{y+1}\|\vartheta_y\|_2(t),\label{ASYTHETA}
\end{eqnarray}
where $\delta_0:=\inf_{[-1,1]}\varrho_0>0.$ Similar estimates hold also for $y<0$ and thus it holds for
any $y\in\mathbb R$ that
\begin{equation}
  \left|\frac{G_y(y,t)}{\varrho_0(y)}\right|\leq\left\|\frac{G_y}{\sqrt{\varrho_0}}\right\|_2(t)+\sqrt {|y|+1}\left\|\left(\frac{G_y}{\varrho_0}\right)_y\right\|_2(t)\label{JY-2}
\end{equation}
and
\begin{equation}
  \vartheta(y,t)
   \leq\frac{\|\sqrt{\varrho_0}
  \vartheta\|_2(t)}{\sqrt{\delta_0}}+\sqrt{|y|+1}\|\vartheta_y\|_2(t).\label{JY-3}
\end{equation}

Substituting (\ref{JY-2})--(\ref{JY-3}) into (\ref{JY-1}) and using (H1), one can get by the H\"older and Sobolev inequalities that
\begin{eqnarray*}
  \left|\frac{J_y(y,t)}{\varrho_0(y)}\right|&\leq&Ce^{C\int_0^t\|G\|_\infty ds}\int_0^t\Bigg(\left\|\frac{G_y}{\sqrt{\varrho_0}}\right\|_2+\sqrt{|y|+1}
  \left\|\left(\frac{G_y}{\varrho_0}\right)_y\right\|_2\Bigg)ds\\
  &&+Ce^{C\int_0^t\|G\|_\infty ds}\int_0^t\left(\|\sqrt{\varrho_0}\vartheta\|_2+\|\vartheta_y\|_2\sqrt{|y|+1}+\|\vartheta_y\|_{H^1}\right)ds\\
  &\leq&C \sqrt t\sqrt{|y|+1} \left[\int_0^t \left(\left\|\left(\frac{G_y}{\sqrt{\varrho_0}},\left(\frac{G_y}{\varrho_0}\right)_y\right)\right\|_2^2+\|\vartheta_y\|_{H^1}^2\right)ds\right]^\frac12\\
  &&+C t\sqrt{|y|+1} \|\sqrt{\varrho_0}\vartheta\|_{L^\infty(0,T; L^2)}  \\
  &\leq&C_1 \sqrt{|y|+1} ,
\end{eqnarray*}
that is,
\begin{equation}
  \left|\frac{J_y(y,t)}{\varrho_0(y)}\right|\leq C_1 \sqrt{|y|+1},\quad\forall y\in\mathbb R, t\in[0,T],\label{JYRHO}
\end{equation}
where the regularities of $(\vartheta, G)$ have been used.

\textbf{Step 3. A scaling transform. }Let $T>0$ be any arbitrary given constant. Assume by contradiction that $s\in L^\infty(\mathbb R\times(0,T))$. Since
$\vartheta=\frac ARe^{\frac{s}{c_v}}\left(\frac{\varrho_0}{J}\right)^{\gamma-1}$ and $J$ has uniform positive lower and upper bounds,
it follows that
\begin{equation}
  0\leq\vartheta(y,t)\leq C_T\varrho_0^{\gamma-1}(y),\quad\forall (y,t)\in\mathbb R\times[0,T]. \label{ZEROTHETA}
\end{equation}

Let $\beta>0$ to be determined later and introduce a scaling transform as
$$
f(y,t):=\vartheta(y^{-\beta},t),\quad y\in(0,\infty), t\geq0.
$$
Then, direct calculations yield
\begin{eqnarray*}
  &&\vartheta(y,t)=f(y^{-\frac1\beta},t),\\
  &&\vartheta_t(y,t)=f_t(y^{-\frac1\beta},t),\quad \vartheta_y(y,t)=-\frac1\beta y^{-(1+\frac1\beta)}f_y(y^{-\frac1\beta},t),\\
  && \vartheta_{yy}(y,t)=\frac1{\beta^2}y^{-(2+\frac2\beta)}f_{yy}(y^{-\frac1\beta},t)+\frac{\beta+1}{\beta^2}y^{-(2+\frac1\beta)}f_y(y^{-\frac1\beta},t),
\end{eqnarray*}
for any $(y,t)\in(0,\infty)\times(0,\infty)$. Besides, one deduces from (\ref{CON-EQ}) that
\begin{eqnarray}
  &&c_v\varrho_0(y^{-\beta})y^{-(2+2\beta)}J(y^{-\beta},t)f_t(y,t)-\frac{\kappa}{\beta^2}f_{yy}(y,t)\nonumber\\
  &&-\left(\frac{\kappa(\beta+1)}{\beta^2}y^{-1}
  +\frac\kappa\beta y^{-(1+\beta)}\frac{J_y(y^{-\beta},t)}{J(y^{-\beta},t)}\right)f_y(y,t)\nonumber\\
   &&+R\varrho_0(y^{-\beta})y^{-(2\beta+2)}v_y(y^{-\beta},t)f(y,t)\geq0,\qquad \label{EQ-f}
\end{eqnarray}
for all $(y,t)\in(0,\infty)\times(0,\infty)$.

\textbf{Step 4. Verifying conditions of Hopf type lemma. }Let $M_T$ be a positive constant to be determined later and define
$$
F(y,t)=e^{-M_Tt}f(y,t), \quad y\in(0,\infty), t\in[0,\infty).
$$
Due to (\ref{REGTHETA}), it is clear that
\begin{equation*}
  0<F\in C^{2,1}((0,\infty)\times(0,\infty))\cap C((0,\infty)\times[0,\infty)).
\end{equation*}
Moreover, it follows from (\ref{ZEROTHETA}) and (H2) that
\begin{eqnarray*}
  F(y,t)&=&e^{-M_Tt}\vartheta(y^{-\beta},t)\leq C_Te^{-M_Tt}\varrho_0^{\gamma-1}(y^{-\beta})\nonumber\\
  &\leq&C_TK_2^{\gamma-1}e^{-M_Tt}(1+y^{-\beta})^{-(\gamma-1)\ell_\rho}\leq C_TK_2^{\gamma-1}e^{-M_Tt}y^{(\gamma-1)\beta\ell_\rho},
\end{eqnarray*}
for an $y\in(0,\infty)$ and $t\in[0,\infty)$. Thus, one can define $F(0,t)=0$ for $t\in[0,\infty)$, such that $F$ is well defined
on $[0,\infty)\times[0,\infty)$, satisfying
\begin{equation}
\left\{
\begin{array}{l}
  F\in C^{2,1}((0,\infty)\times(0,\infty)) \cap C([0,\infty)\times[0,\infty)),\\
  F>0\mbox{ in }(0,\infty)\times(0,\infty), \quad F(0,t)=0,\quad\forall t\in[0,\infty).
  \end{array}
  \right.
  \label{REGF}
\end{equation}

It follows from (\ref{EQ-f}) that
\begin{equation}
  a_0(y,t)F_t(y,t)-aF_{yy}(y,t)+b(y,t)F_y(y,t)+c(y,t)F(y,t)\geq0,\label{EQ-F}
\end{equation}
in $(0,\infty)\times(0,\infty)$, where
\begin{eqnarray*}
a_0(y,t)&=&c_v\varrho_0(y^{-\beta})y^{-(2+2\beta)}J(y^{-\beta},t),\qquad a=\frac\kappa{\beta^2},\\
b(y,t)&=&-\left(\frac{\kappa(\beta+1)}{\beta^2}y^{-1}
  +\frac\kappa\beta y^{-(1+\beta)}\frac{J_y(y^{-\beta},t)}{J(y^{-\beta},t)}\right),\\
c(y,t)&=&\varrho_0(y^{-\beta})y^{-(2+2\beta)}\Big(c_vM_TJ(y^{-\beta},t)+Rv_y(y^{-\beta},t)\Big).
\end{eqnarray*}

Take arbitrary $t_0\in(0,T)$, $0<y_0<\min\{\frac12,t_0\}$, and set
\begin{eqnarray*}
P_0=(y_0,t_0), \quad r=y_0,\quad P_*=(0,t_0)=:(y_*,t_*),\quad\delta_*=\frac{y_0}{8},\\
 P_0^*=\left(\frac{y_0}{2}, t_0\right)=:(y_0^*,t_0^*),\quad
\mathscr D=\mathcal B_{\delta_*}(P_*)\cap \mathcal B_{\frac r2}(P_0^*).
\end{eqnarray*}
Then,
$$
P_*\in\partial\mathcal B_r(P_0),\quad \mathscr D=\mathcal B_{\frac{y_0}{8}}((0,t_0))\cap\mathcal B_{\frac{y_0}{2}}((\tfrac{y_0}{2},t_0)).
$$
For any $(y,t)\in\mathscr D$, due to $0<y<\frac{y_0}{8}<\frac1{16}$ and $\frac{t_0}{2}<t<\frac32 t_0$, one deduces
\begin{eqnarray}
  &&(t-t_0^*)a_0(y,t)+(y-y_0^*)b(y,t)\nonumber\\
  &=&c_v(t-t_0)\varrho_0(y^{-\beta})y^{-(2+2\beta)}J(y^{-\beta},t)\nonumber\\
  &&-\left(y-\frac{y_0}{2}\right)\left(\frac{\kappa(\beta+1)}{\beta^2}y^{-1}+\frac\kappa\beta
  y^{-(1+\beta)}\frac{J_y(y^{-\beta},t)}{J(y^{-\beta},t)}\right)\nonumber\\
  &\geq&-c_vt_0\varrho_0(y^{-\beta})y^{-(2+2\beta)}J(y^{-\beta},t)-\frac{\kappa y_0}{2\beta}y^{-(1+\beta)}\frac{|J_y(y^{-\beta},t)|}{J(y^{-\beta},t)}\nonumber\\
  &\geq&- c_vt_0\overline j_T\varrho_0(y^{-\beta})y^{-(2+2\beta)}-\frac{\kappa}{\underline j_T\beta} y^{-(1+\beta)}|J_y(y^{-\beta},t)| ,\label{a0b}
\end{eqnarray}
where
\begin{equation}
\overline j_T:=\sup_{(y,t)\in\mathbb R\times[0,T]}J(y,t),
\quad \underline j_T:=\inf_{(y,t)\in\mathbb R\times[0,T]}J(y,t).\label{JT}
\end{equation}

Set
$M_T:=\frac{R\|v_y\|_{L^\infty(\mathbb R\times(0,T))}}{c_v\underline j_T}.$
Then,
\begin{equation*}
  c_vM_TJ(y^{-\beta},t)+Rv_y(y^{-\beta},t) \geq c_vM_T\underline j_T-R\|v_y\|_{L^\infty(\mathbb R\times(0,T))}=0
  \end{equation*}
  and
  \begin{eqnarray*}
  c_vM_TJ(y^{-\beta},t)+Rv_y(y^{-\beta},t)& \leq&  c_vM_T\overline j_T+R\|v_y\|_{L^\infty(\mathbb R\times(0,T))} \\
   &= &c_vM_T(\overline j_T+\underline j_T).
\end{eqnarray*}
Thus, for any $(y,t)\in\mathscr D$, since $\sqrt{|y-y_*|^2+|t-t_*|^2}\leq\delta_*\leq\frac{1}{16}$, it holds that
\begin{equation}
  0\leq c(y,t)\sqrt{|y-y_*|^2+|t-t_*|^2} \leq c_vM_T(\overline j_T+\underline j_T)\varrho_0(y^{-\beta})y^{-(2+2\beta)}.\label{estc-1}
\end{equation}
For any $(y,t)\in\mathscr D$, since $0<y<1$, it follows from (\ref{JYRHO}) and (H2) that
\begin{eqnarray}
  \varrho_0(y^{-\beta})y^{-(2+2\beta)} \leq K_2(1+y^{-\beta})^{-\ell_\rho}y^{-(2+2\beta)}\leq K_2y^{(\ell_\rho-2)\beta-2}\leq K_2
  \label{newadd-1}
\end{eqnarray}
and
\begin{eqnarray}
  y^{-(1+\beta)}|J_y(y^{-\beta},t)|&\leq& C_1y^{-(1+\beta)}\varrho_0(y^{-\beta})\sqrt{1+y^{-\beta}}\leq  C_1K_2y^{-(1+\beta)}(1+y^{-\beta})^{-\ell_\rho+\frac12}\nonumber\\
  &\leq& C_1K_2y^{(\ell_\rho-\frac32)\beta-1}\leq C_1K_2,\label{newadd-2}
\end{eqnarray}
as long as
$
\beta\geq\max\left\{\frac{2}{\ell_\rho-2},\frac{2}{2\ell_\rho-3}\right\}=\frac{2}{\ell_\rho-2}.
$

Due to (\ref{newadd-1}) and (\ref{newadd-2}), it follows from (\ref{a0b}) and (\ref{estc-1}) that
\begin{eqnarray}
  (t-t_0^*)a_0(y,t)+(y-y_0^*)b(y,t)\geq -(C_1+1)K_2\left(c_vt_0\overline j_T+\frac{\kappa}{\underline j_T\beta}\right),\label{PROPa0b}\\
  0\leq c(y,t)\sqrt{|y-y_*|^2+|t-t_*|^2} \leq c_vM_T(\overline j_T+\underline j_T)K_2,\label{PROPc}
\end{eqnarray}
for any $(y,t)\in\mathscr D$, as long as $\beta\geq\frac{2}{\ell_\rho-2}.$

\textbf{Step 5. Unboundedness of entropy.} Choose
$$
\beta=\beta_0:=\max\left\{\frac{2}{(\gamma-1)\ell_\rho} ,\frac{2}{\ell_\rho-2}\right\}.
$$
Due to (\ref{REGF}), (\ref{EQ-F}), (\ref{PROPa0b}), and (\ref{PROPc}), it follows from Lemma \ref{LEMHOPF} that
\begin{eqnarray*}
  \varlimsup_{\ell\rightarrow 0^+}\frac{F(P_*)-F(P_*-n_*\ell)}{\ell}=-\varliminf_{\ell\rightarrow0^+}
  \frac{F(P_*-n_*\ell)}{\ell}=-2\varepsilon_2,
\end{eqnarray*}
for some positive constant $\varepsilon_2$,
where we recall $P_*=(0,t_0)$, $n_*=\frac{P_*-P_0}{r}=(-1,0)$, and thus $P_*-n_*\ell=(\ell, t_0)$. Thus, there is a positive number $\ell_0$, such
that
\begin{equation*}
  F(y, t_0)=e^{-M_Tt_0}\vartheta(y^{-\beta_0},t_0)\geq\varepsilon_2y, \quad\forall y\in(0,\ell_0),
\end{equation*}
that is
\begin{equation*}
  \vartheta(y, t_0)\geq \varepsilon_2e^{M_Tt_0}y^{-\frac1{\beta_0}},\quad \forall y\in\left(\ell_0^{-\frac1{\beta_0}},\infty\right).
\end{equation*}

On the other hand, it follows from (\ref{ZEROTHETA}) and (H2) that
\begin{eqnarray*}
  \vartheta(y,t) \leq C_TK_2^{\gamma-1}(1+y)^{-\ell_\rho(\gamma-1)}
  \leq C_TK_2^{\gamma-1}y^{-\ell_\rho(\gamma-1)}, \quad\forall y>0.
\end{eqnarray*}
Combing the previous two inequalities leads to
$$
y^{(\gamma-1)\ell_\rho-\frac1{\beta_0}}\leq C_TK_2^{\gamma-1}\varepsilon_2^{-1}e^{-M_Tt_0},\quad  \forall y\in(\ell_0^{-\frac1{\beta_0}},\infty),
$$
which is impossible when $y\rightarrow\infty$, as $(\gamma-1)\ell_\rho-\frac1{\beta_0}\geq\frac{\gamma-1}{2}\ell_\rho>0$.
This contradiction leads to the desired conclusion that $s\not\in L^\infty(\mathbb R\times(0,T))$.
\end{proof}

\section{Uniform positivity of $\vartheta$ and asymptotic unboundedness of $s$}
\label{SECPOSTEM}
In this section, we prove the uniform positivity of the temperature and asymptotic unboundedness of the entropy, under the condition that the initial density decays at the far field not slower than $O(\frac1{x^4})$, which yields the proof of Theorem \ref{THMPOSTEM}.

\begin{proof}[Proof of Theorem \ref{THMPOSTEM}]
We need only to prove (i), while the conclusion (ii) follows from (i), (\ref{ENTROPY'}), and (\ref{JY-3}),
as $J$ has uniformly positive
lower and upper bounds at each time $t\in(0,\infty)$.

Let $h$ be the Kelvin transform of $\vartheta$ defined as
\begin{equation}\label{Defh}
h(y,t)=y\vartheta\left(\frac1y,t\right),\quad \forall y\not=0, t\in[0,T].
\end{equation}
Then (\ref{REGTHETA}) implies
\begin{equation}
 h\in C^{2,1}((\mathbb R_+\cup\mathbb R_-)\times(0,T))\cap C((\mathbb R_+\cup\mathbb R_-)\times[0,T]).
\label{REGh}
\end{equation}
Note that
\begin{align*}
  &\vartheta(y,t) = yh\left(\frac1y,t\right), \quad
  \vartheta_t(y,t) = yh_t\left(\frac1y,t\right), \\
  &\vartheta_y(y,t) = h\left(\frac1y,t\right)-\frac1yh_y\left(\frac1y,t\right),\quad \vartheta_{yy}(y,t) = \frac{1}{y^3}h_{yy}\left(\frac1y,t\right),
\end{align*}
for any $y\not=0$ and $t\in(0,T)$. It follows from these and (\ref{CON-EQ}) that
\begin{eqnarray*}
  &&c_v\varrho_0\left(\frac1y\right)\frac1{y^4}h_t\left(y,t\right)-\frac{\kappa}{J\left(\frac1y,t\right)}  h_{yy}\left(y,t\right)-\kappa \frac{J_y\left(\frac1y,t\right)}{J^2\left(\frac1y,t\right)}\frac1{y^2}h_y\left(y,t\right)
\\
&&+ \left(R\frac{v_y\left(\frac1y,t\right)}{J\left(\frac1y,t\right)}\frac{\varrho_0\left(\frac1y\right)}{y^4}
  +\kappa\frac{ J_y\left(\frac1y,t\right)}{J^2\left(\frac1y,t\right)}\frac1{y^3}\right)h(y,t)
 =\mu\frac{\left|v_y\left(\frac1y,t\right)\right|^2}{y^3 J\left(\frac1y,t\right)},
\end{eqnarray*}
for $y\not=0$. Define $a_0, a, b,$ and $\tilde c$ as
\begin{eqnarray*}
&& a _0:=c_v\varrho_0\left(\frac1y\right)\frac1{y^4}, \quad a :=\frac{\kappa}{J\left(\frac1y,t\right)},\quad  b:=-\kappa \frac{J_y\left(\frac1y,t\right)}{J^2\left(\frac1y,t\right)}\frac1{y^2},
\\
&&\tilde c:= R\frac{v_y\left(\frac1y,t\right)}{J\left(\frac1y,t\right)}\frac{\varrho_0\left(\frac1y\right)}{y^4}
  +\kappa\frac{ J_y\left(\frac1y,t\right)}{J^2\left(\frac1y,t\right)}\frac1{y^3},\quad\forall y\not=0, t\in[0,T].
\end{eqnarray*}
Then, it holds that
\begin{equation}
\label{eqh}
\left\{
\begin{array}{l}
   a _0h_t- a h_{yy}+ bh_y+\tilde ch\geq0,\quad\mbox{ in }Q_T^+, \\
   a _0h_t- a h_{yy}+ bh_y+\tilde ch\leq0,\quad\mbox{ in }Q_T^-,
   \end{array}
   \right.
\end{equation}
where
$$
Q_T^+:=\mathbb R_+\times(0,T),\quad Q_T^-:=\mathbb R_-\times(0,T).
$$

Properties of $ a _0,  a ,  b,$ and $\tilde c$ are analyzed as follows. It follows from (\ref{CONT-1}) and the regularities of $\varrho_0$ and $J$ that
\begin{equation}
   a _0\in C(\mathbb R^+\cup\mathbb R_-), \quad  a ,  b, \tilde c\in C(Q_T^+\cup Q_T^-). \label{CONT}
\end{equation}
For $ a _0$, it follows from (H3) that
\begin{equation}
  0\leq  a _0(y) \leq\frac{c_vK_3}{(|y|+1)^4}\leq c_vK_3,\quad\forall y\not=0. \label{BDDA0}
\end{equation}
For $ a $, it holds that
\begin{equation}
  \lambda_T\leq  a (y,t)\leq\Lambda_T,\quad\forall y\not=0, t\in[0,T], \label{BDDA}
\end{equation}
where $\lambda_T=\frac{\kappa}{\overline j_T}$ and $\Lambda_T=\frac{\kappa}{\underline j_T},$ with $\underline j_T$ and $\overline j_T$ given by (\ref{JT}).

It follows from (\ref{JYRHO}) and (H3) that
\begin{equation}
  |J_y(y,t)|\leq C_1\varrho_0(y) \sqrt{|y|+1}\leq C_1K_3(|y|+1)^{-\frac72},\quad\forall y\in\mathbb R, t\in[0,T].
  \label{JY-4}
\end{equation}
This implies that
\begin{equation}
  | b(y,t)|\leq\frac{\kappa}{\underline j_T^2}\frac1{y^2}\left|J_y\left(\frac1y,t\right)\right|\leq\frac{\kappa}{\underline j_T^2}\frac1{y^2} C_1K_3\left(\frac{1}{|y|}+1\right)^{-\frac72}\leq \frac{C_1K_3\kappa}{\underline j_T^2},\label{BDDB}
\end{equation}
for any $y\not=0$ and $t\in[0,T]$. By (H3) and (\ref{JY-4}), one deduces
\begin{eqnarray}
  |\tilde c(y,t)|&\leq&\frac R{\underline j_T}\frac1{y^4}\frac{K_3}{\left(1+\frac{1}{|y|}\right)^4}\|v_y\|_\infty(t)
  +\frac\kappa{\underline j_T^2}\frac1{|y|^3}C_1K_3\left(1+\frac{1}{|y|}\right)^{-\frac72}\nonumber\\
  &\leq&\frac{RK_3}{\underline j_T}\|v_y\|_\infty(t)+\frac{\kappa}{\underline j_T^2}C_1K_3
  \leq C(\|v_y\|_{L^\infty(0,T; H^1)}+1),  \label{BDDC}
\end{eqnarray}
for any $y\not=0$ and $t\in[0,T]$.

Set
$$
N_T=\frac{2}{c_v}\left(\frac{R}{\underline j_T}\|v_y\|_{L^\infty(\mathbb R\times(0,T))}+\frac{\sqrt2\kappa C_1}{\underline j_T^2}\right)
$$
and define
\begin{equation}
\label{DefH}
H(y,t)=e^{-N_Tt}h(y,t), \quad\forall y\not=0, t\in[0,T].
\end{equation}
Due to (\ref{REGh}), it is clear that
\begin{equation}
H\in C^{2,1}((\mathbb R_+\cup\mathbb R_-)\times(0,T))\cap C((\mathbb R_+\cup\mathbb R_-)\times[0,T]).
\label{PH-1}
\end{equation}
Since $\vartheta>0$, it follows from (\ref{Defh}) and (\ref{DefH}) that
\begin{equation}
  \label{PH-2}
  H>0 \mbox{ in }Q_T^+\quad\mbox{ and }\quad H<0\mbox{ in }Q_T^-.
\end{equation}
By (\ref{JY-3}) and recalling the definitions of $h$ and $H$, one deduces
\begin{eqnarray*}
  |H(y,t)|&\leq&|y|\vartheta\left(\frac1y,t\right)\leq C(\|\sqrt{\varrho_0}\vartheta\|_{L^\infty(0,T; L^2)}+\|\vartheta_y\|_{L^\infty(0,T; L^2)})|y|\sqrt{1+\frac1{|y|}}\nonumber\\
  &\leq&C\sqrt{y^2+|y|},\quad \forall y\not=0, t\in[0,T].
\end{eqnarray*}
Thanks to this, it holds that
\begin{equation}
  \lim_{(y,\tau)\rightarrow(0,t)}H(y,t)=0,\quad\forall t\in[0,T].  \label{PH-3}
\end{equation}
It follows from direct calculations and (\ref{eqh}) that
\begin{equation}
\label{PH-4}
\left\{
\begin{array}{l}
  a_0H_t-aH_{yy}+bH_y+cH\geq0, \quad\mbox{in }Q_T^+,\\
  a_0H_t-aH_{yy}+bH_y+cH\leq0, \quad\mbox{in }Q_T^-,
  \end{array}
  \right.
\end{equation}
where
\begin{eqnarray*}
  c(y,t)&:=&\tilde c(y,t)+N_Ta_0(y,t)\\
  &=&\left(c_vN_T+R\frac{v_y\left(\frac1y,t\right)}{J\left(\frac1y,t\right)}
  +\frac\kappa{J^2\left(\frac1y,t\right)}\frac{J_y\left(\frac1y,t\right)}{\varrho_0\left(\frac1y\right)}y
  \right)\varrho_0\left(\frac1y\right)\frac1{y^4}.
\end{eqnarray*}
For any $y\in[-1,0)\cap(0,1]$ and $t\in[0,T]$, it follows from (\ref{JYRHO}) that
\begin{eqnarray*}
  c(y,t)&\geq&\left(c_vN_T-\frac{R}{\underline j_T}\|v_y\|_\infty(t)-\frac\kappa{\underline j_T^2}|y|C_1\sqrt{1+\frac{1}{|y|}}\right)\varrho_0\left(\frac1y\right)\frac1{y^4}\\
  &=&\left(c_vN_T-\frac{R}{\underline j_T}\|v_y\|_\infty(t)-\frac\kappa{\underline j_T^2}C_1\sqrt{|y|^2+|y|}\right)\varrho_0\left(\frac1y\right)\frac1{y^4}\\
  &\geq&\left(c_vN_T-\frac{R}{\underline j_T}\|v_y\|_\infty(t)-\frac{\sqrt2\kappa C_1}{\underline j_T^2}\right)\varrho_0\left(\frac1y\right)\frac1{y^4}\\
  &\geq&\left( \frac{R}{\underline j_T}\|v_y\|_\infty(t)+\frac{\sqrt2\kappa C_1}{\underline j_T^2}\right)\varrho_0\left(\frac1y\right)\frac1{y^4}
\end{eqnarray*}
and thus
\begin{equation}
  \label{PC}
  c(y,t)\geq0,\quad\forall y\in[-1,0)\cup(0,1], t\in[0,T].
\end{equation}

Define
\begin{equation}
\label{DefTH}
\widetilde H(y,t)=\left\{
                    \begin{array}{cl}
                      H(y,t),&\mbox{if }y>0,\\
                      0, &\mbox{if }y=0,   \\
                      -H(y,t),&\mbox{if }y<0,
                    \end{array}
                  \right.
\end{equation}
for all $t\in[0,T]$. Denote
\begin{eqnarray*}
 \Omega_-:=(-1,0)\times(0,T), \quad \Omega_+:=(0,1)\times(0,T), \\ \Omega:=\Omega_+\cup\Omega_-,\quad\Gamma:=\{0\}\times[0,T].
\end{eqnarray*}
Then, it follows from (\ref{PH-1})--(\ref{PH-4}) that
\begin{align}
  \widetilde H\in C^{2,1}(\Omega)\cap C(\overline\Omega),\quad\widetilde H>0\mbox{ in }\Omega, \quad\widetilde H|_\Gamma=0, \label{REGTH}\\
  \mathscr L\widetilde H=a_0\widetilde H_t-a\widetilde H_{yy}+b\widetilde H_y+c\widetilde H\geq0 \quad\mbox{in }\Omega,
  \label{EQTH}
\end{align}
with $a_0, a, b,$ and $c$ satisfying
\begin{equation}
\label{REGABC}
\left\{\begin{array}{l}
   a_0\in C((-1,1)\setminus\{0\})\cap L^\infty((-1,1)\setminus\{0\}), \\
  a, b, c\in C(\Omega)\cap L^\infty(\Omega), \quad\lambda_T\leq a\leq\Lambda_T, \quad c\geq0\mbox{ in }\Omega,
   \end{array}
  \right.
\end{equation}
which follows from (\ref{CONT})--(\ref{BDDA}), (\ref{BDDB})--(\ref{BDDC}), and (\ref{PC}).

For arbitrary $t_0\in(0,T)$, set
$$
P_*=(0,t_0), \quad P_0=(y_0, t_0), \quad r=y_0=\min\left\{\frac12, t_0, T-t_0\right\}.
$$
Then, it is clear that $n_*:=\frac{P_*-P_0}{r}=(-1,0)$. Let $\mathcal B_r$ be the space-time
ball of radius $r$ and centered at $P_0$. Thanks to (\ref{REGTH}), (\ref{EQTH}), and (\ref{REGABC}), it is clear that
$\widetilde H$ satisfies all the conditions in Corollary \ref{CORHOPF}, and thus Corollary \ref{CORHOPF} implies
\begin{eqnarray*}
  \varlimsup_{\ell\rightarrow0^+}\frac{\widetilde H(P_*)-\widetilde H(P_*-\ell n_*)}{\ell}=\varlimsup_{\ell\rightarrow0^+}\frac{ -\widetilde H(\ell,t_0)}{\ell}=-2\varepsilon_0,
\end{eqnarray*}
with a positive constant $\varepsilon_0$. Thus,
$\varliminf_{\ell\rightarrow0^+}\frac{\widetilde H(\ell,t_0)}{\ell}=2\varepsilon_0,$
which yields that
\begin{equation}
  \widetilde H(y,t_0)\geq \varepsilon_0y, \quad \forall y\in(0,\ell_0),
\end{equation}
for some positive constant $\ell_0$. Then, by the definition of $\widetilde H$, one derives
\begin{equation}
  \widetilde H(y,t_0)=e^{-N_Tt_0}h(y,t_0)=e^{-N_Tt_0}y\vartheta\left(\frac1y,t_0\right)\geq\varepsilon_0y,\quad\forall y\in(0,\ell_0)
\end{equation}
and thus,
\begin{equation}
  \vartheta(y,t_0)\geq\varepsilon_0e^{N_Tt_0}\geq\varepsilon_0,
  \quad\forall y\in\left(\frac1{\ell_0},\infty\right). \label{LBP}
\end{equation}

Similarly, there are positive constants $\varepsilon_1$ and $\ell_1$ such that
\begin{equation*}
  \vartheta(y,t_0)\geq\varepsilon_1,\quad\forall y\in\left(-\infty,-\frac1{\ell_1}\right).
\end{equation*}
Combining this with (\ref{LBP}) and recalling that $0<\vartheta\in C(\mathbb R\times[0,T])$, one has
\begin{equation*}
  \inf_{y\in\mathbb R}\vartheta(y,t_0)=\min\left\{\varepsilon_0, \vartheta_1, \inf_{y\in\left[-\frac1{\ell_1},\frac1{\ell_0}\right]}\vartheta(y,t_0)\right\}>0.
\end{equation*}
This yields the desired conclusion, and the proof of Theorem \ref{THMPOSTEM} is completed.
\end{proof}

\section*{Acknowledgments}{This work was supported by the Key Project of National Natural Science Foundation of China (Grant No. 12131010) and Guangdong Basic and Applied Basic Research Foundation (Grant No. 2020B1515310002).
The work of J. L. was also supported by the National
Natural Science Foundation of China (Grants No. 11971009 and 11871005) and by
the Guangdong Basic and Applied Basic Research Foundation (Grants No. 2019A1515011621 and
2020B1515310005).
The work of Z.X. was also supported by the Zheng Ge Ru Foundation and by the Hong Kong RGC Earmarked Research Grants (Grants No. CUHK-14301421,
CUHK-14300917, CUHK-14300819, and CUHK-14302819).}

\end{document}